\documentclass[10pt]{article}

\usepackage[T1]{fontenc}
\usepackage[utf8]{inputenc}
\usepackage{amsmath,mathtools,amsthm}
\usepackage{newtxtext,newtxmath}
\usepackage[letterpaper,left=102pt,right=103pt,top=80pt,bottom=121pt]{geometry}
\usepackage{bm}
\usepackage{graphicx}
\usepackage{enumitem}
\usepackage{booktabs}
\usepackage{xcolor}
\usepackage[numbers,sort&compress]{natbib}
\usepackage{microtype}
\IfFileExists{placeins.sty}{\usepackage{placeins}}{\newcommand{\FloatBarrier}{}}
\usepackage[hidelinks]{hyperref}

\theoremstyle{plain}
\newtheorem{theorem}{Theorem}[section]
\newtheorem{lemma}[theorem]{Lemma}
\newtheorem{proposition}[theorem]{Proposition}
\newtheorem{corollary}[theorem]{Corollary}

\theoremstyle{definition}
\newtheorem{assumption}[theorem]{Assumption}
\newtheorem{definition}[theorem]{Definition}
\newtheorem{remark}[theorem]{Remark}
\newtheorem{example}[theorem]{Example}

\allowdisplaybreaks

\title{Nonparametric Drift Estimation for Multidimensional\\
Stochastic Differential Equations under Censoring}
\author{Yichuan Huang\\[0.35em]
\small Laboratoire Modal'X, Universit\'e Paris Nanterre, Nanterre, France\\
\small Laboratoire MAP5, Universit\'e Paris Cit\'e, Paris, France\\
\small \texttt{yhuang@parisnanterre.fr}}
\date{}

\begin{document}
\maketitle

\begin{abstract}
In many applications, a diffusion trajectory is observed without distortion
only while it remains inside a time-varying region; outside this region, only
its nearest boundary point and an indicator of visibility are recorded. We
formalize this setting through a censoring scheme for multidimensional
stochastic differential equations, in which the latent state is observed via
its Euclidean projection onto a random, time-dependent closed convex set.
Classical arguments handle the one-dimensional case. In higher dimensions,
however, the projection generates additional finite-variation terms, and it is
not immediate that the local information required for drift estimation is
preserved under censoring. We show that it is. Under a suitable regularity
condition on the projection map, allowing the censoring region to be
nonsmooth, the observed process is a continuous semimartingale and admits a
generalized Itô decomposition, whose finite-variation correction term assigns
no mass to visible times. Consequently, stochastic integrals over visible
times coincide exactly for the observed and latent processes. Building on
this identity, we construct a Nadaraya--Watson-type drift estimator with a
data-driven bandwidth selection rule and establish oracle inequalities. We
also derive anisotropic minimax lower bounds for both the full-observation and
censored experiments.
\end{abstract}

\medskip
\noindent\textbf{Keywords:} adaptive bandwidth selection; anisotropic minimax rates;
censored data; diffusion processes; drift estimation; generalized It\^o formula;
Nadaraya--Watson estimator.

\smallskip
\noindent\textbf{MSC 2020:} Primary 62G05, 62M05; secondary 60H10, 60J60, 62N02.

\section{Introduction} \label{sec:introduction}

We consider a multidimensional diffusion observed through a random, possibly
time-varying closed convex censoring set. The latent state is recorded exactly
while it lies in the interior of the set and is otherwise replaced by its
Euclidean projection onto the set, together with an indicator of visibility. This framework is motivated by several applications: sensor
measurements clipped at a finite operating range
\cite{chan2023restoration, lin2025saturation}; asset returns censored at
regulatory price limits \cite{xu2024tail}; and multivariate measurements subject to detection limits
\cite{perlino2025bayesian, wang2023multivariate, lachos2017finite}. The observable region may be a box, a band, or
a ball (Example~\ref{ex:censoring_regime}). We model the latent signal as a
multidimensional diffusion and the observation as its Euclidean projection
onto a random, possibly time-varying closed convex set, and study
nonparametric estimation of the drift.

Let $X^{(i)}=(X^{(i)}(t))_{t\in[0,T]}$, $i=1,\ldots,N$, be independent
trajectories satisfying the stochastic differential equation
\begin{equation}
\label{eq:SDE_X}
X^{(i)}(t)
=
x_0+\int_0^t b(X^{(i)}(s))\,ds
+\int_0^t \Sigma(X^{(i)}(s))\,dW^{(i)}(s),
\qquad x_0\in\mathbb{R}^d,\quad t\in[0,T],
\end{equation}
where $b:\mathbb{R}^d\to\mathbb{R}^d$ is the drift field,
$\Sigma:\mathbb{R}^d\to\mathbb{R}^{d\times q}$ is the dispersion matrix, and
$W^{(i)}=(W^{(i)}_1,\ldots,W^{(i)}_q)^\top$ is a $q$-dimensional standard
Brownian motion. Rather than observing each path directly, we consider a new observation scheme in which the censoring region at time $t$ is indexed by a time-varying auxiliary process
$\theta^{(i)}(t)\in\Theta\subset\mathbb{R}^{p}$. Specifically, the observed
data are
\begin{equation}
\label{eq:projected-observation}
Y^{(i)}(t)=\Pi_{D(\theta^{(i)}(t))}\!\left(X^{(i)}(t)\right),\qquad
\delta^{(i)}(t)=\mathbf{1}_{\{X^{(i)}(t)\in\mathrm{Int}(D(\theta^{(i)}(t)))\}},
\end{equation}
where $D$ assigns to each $\theta\in\Theta$ a nonempty closed convex set
$D(\theta)\subset\mathbb{R}^d$, $\Pi_A:\mathbb{R}^d\to\mathbb{R}^d$ denotes
the Euclidean projection onto a nonempty closed convex set
$A\subset\mathbb{R}^d$, and $\operatorname{Int}(\cdot)$ is the topological
interior. The latent state is either observed directly, when $\delta^{(i)}(t)=1$, or recorded through its projection onto the boundary of the censoring set. The goal of this paper is to
construct and analyze a nonparametric estimator of the drift $b$ from the
censored observations $\{(Y^{(i)},\delta^{(i)})\}_{i=1}^N$.

The starting point of our analysis is probabilistic. Under suitable geometric conditions on the censoring domains, we show that $Y$ is a continuous
semimartingale and admits a generalized It\^{o} decomposition adapted to the
possibly nonsmooth and time-dependent geometry of $D(\theta(t))$. In
particular, for every suitable predictable integrand $H$,
\begin{align}\label{paths}
\int_0^T \delta(t)H(t)\,dY(t)
=
\int_0^T \delta(t)H(t)\,dX(t).
\end{align}
This identity \eqref{paths} is the key probabilistic link between the censored
observation scheme and the subsequent drift estimation procedure. At visible times, that is, when $\delta(t)=1$, the observed increment $dY(t)$ carries exactly the same local information as the latent increment $dX(t)$.

In the SDE setting, censoring of the form $Y(t)=X(t)\wedge\theta(t)$ was studied
in Huang~\cite{huang2026censoredsde}, where \eqref{paths} is established via
Tanaka's formula (see, e.g., Revuz and
Yor~\cite[Chapter~VI, Theorem~1.2]{revuz2013continuous}), an intrinsically
one-dimensional argument. In dimension $d>1$, the projected process
$Y=\Pi_{D(\theta)}X$ acquires nontrivial finite-variation terms arising from
the projection geometry, and Tanaka's formula no longer applies directly. Nor
can the regularity of $(\theta,x)\mapsto\Pi_{D(\theta)}x$ be taken for granted,
since the projection onto a closed convex set need not even be
directionally differentiable (see
Shapiro~\cite[Introduction]{shapiro2016differentiability} and the references therein). So far, semimartingale decompositions of projected diffusions have been established only for sufficiently smooth boundaries. Gobet~\cite[Proposition~3.1]{gobet2000weak} treats
$C^3$ domains, flattening the boundary locally so that Tanaka's formula applies in the normal direction. Here, the censoring sets may be nonsmooth and time-dependent. It is therefore not immediately clear that
the information needed for drift estimation can still be recovered
from the censored path in the multidimensional case.

Identity \eqref{paths} is the path-valued analog of a classical scalar
identity underlying complete-case regression with a censored covariate. In
that setting, one observes a response $V=f(X)+\varepsilon$ with
$\mathbb{E}[\varepsilon\mid X]=0$, where $f(x):=\mathbb{E}[V\mid X=x]$ is the
regression function to be estimated; the covariate $X$ is right-censored by a
censoring variable $\theta$, so that only the censored covariate
$Z=X\wedge\theta$ and the censoring indicator
$\Delta=\mathbf{1}_{\{X\leq\theta\}}$ are observed rather than $X$ itself.
Under the conditional independence assumption $V\perp\theta\mid X$, we have
\begin{align}\label{expectation}
    f(x)=\mathbb{E}[V\mid Z=x,\,\Delta=1].
\end{align}
This identity shows
that restricting to the complete cases $\{\Delta=1\}$, on which $Z=X$,
recovers the uncensored regression target without bias; see
Efromovich~\cite[Section~3.5.2]{efromovich2026survival}, Lotspeich et
al.~\cite{lotspeich2024making}, and Wang and
Feng~\cite[Proposition~1]{wang2012multiple} for treatments in the regression setting.

Nonparametric inference for diffusion models has most often been studied in
the single-trajectory regime ($N=1$), where one path is observed over $[0,T]$,
with $T\to\infty$ under stationarity or ergodicity. For scalar diffusions,
drift estimation was developed by Kutoyants~\cite{kutoyants2004statistical},
Hoffmann~\cite{hoffmann1999adaptive}, Spokoiny~\cite{spokoiny2000adaptive},
Gobet, Hoffmann and Rei\ss~\cite{gobet2004nonparametric},
Dalalyan~\cite{dalalyan2005sharp}, Comte, Genon-Catalot and
Rozenholc~\cite{comte2007penalized}, and Comte and
Genon-Catalot~\cite{comte2021drift}. In dimension $d>1$, contributions include
Schmisser~\cite{Schmisser2013}, Strauch~\cite{Strauch2015}, Nickl and
Ray~\cite{nickl2020nonparametric}, Aeckerle-Willems and
Strauch~\cite{AeckerleWillemsStrauch2022}, and Oga and
Koike~\cite{oga2024drift}. In a second regime, closer to functional data analysis, one observes $N$ independent
paths on a fixed interval $[0,T]$ and lets $N\to\infty$; nonparametric drift
estimation in this setting was developed, for scalar diffusions, by Comte and
Genon-Catalot~\cite{comte2020nonparametric}, Denis, Dion-Blanc and
Martinez~\cite{denis2021ridge}, Marie and Rosier~\cite{marie2023nadaraya}, and
Marie~\cite{marie2025nonparametric}. The present work belongs to the fixed-$T$, $N\to\infty$ regime. Multidimensional
drift estimation on a fixed horizon as $N\to\infty$ was developed by Della Maestra
and Hoffmann~\cite{dellamaestra2022nonparametric} for McKean--Vlasov particle
systems, a framework that, as the authors note
in~\cite[Section~1.2]{dellamaestra2022nonparametric}, also covers independent
paths in the absence of interaction. Our setting differs in that the trajectories are not fully observed, but are instead recorded through the censoring scheme~\eqref{eq:projected-observation}.

Building on identity \eqref{paths}, we construct an estimator of
Nadaraya--Watson type and first establish the standard bias--variance
decomposition of its risk. We then introduce a data-driven bandwidth
selection rule in the spirit of Goldenshluger and
Lepski~\cite{goldenshluger2011bandwidth}. Since our estimator is a ratio, the bandwidths of the numerator and
denominator are selected separately, following Comte and
Marie~\cite{comte2021nadaraya}. For drift estimation, the Goldenshluger--Lepski device has been employed in the
single-trajectory $T\to\infty$ regime by Aeckerle-Willems and
Strauch~\cite{AeckerleWillemsStrauch2022}, under sup-norm risk, and in the
fixed-$T$, $N\to\infty$ regime by Della Maestra and
Hoffmann~\cite{dellamaestra2022nonparametric}, under pointwise risk. Here, the
risk is the integrated $L^2$ risk, and the bandwidth
is a $d$-vector with one component per coordinate, allowing the procedure to adapt to potentially different smoothness levels across spatial directions.

The form of the risk also determines which empirical quantities need to be controlled through concentration inequalities. The Talagrand-type inequality of Klein and
Rio~\cite{klein2005concentration}, which already underlies the analysis of~\cite{comte2021nadaraya}, remains the appropriate tool, but our summands involve stochastic integrals and are not uniformly bounded over sample paths. We show that a single Hilbert-norm truncation at a suitably chosen level suffices, thereby yielding a deviation inequality for families of Hilbert-space-valued empirical means with possibly unbounded summands. This inequality may be of independent
interest. In addition, we establish anisotropic minimax lower bounds for
multidimensional drift classes. In the one-dimensional uncensored setting, a corresponding lower bound was previously obtained by Denis, Dion-Blanc and
Martinez~\cite[Theorem 4.7]{denis2021ridge}; we extend this result to the multidimensional censored setting and to anisotropic drift classes.

In multivariate survival analysis, the problem of nonparametric estimation when $T\in\mathbb{R}^d$ is censored through the coordinatewise minimum
$T\wedge\theta$ has been extensively studied; see Dabrowska~\cite{dabrowska1988kaplan},
Lin and Ying~\cite{lin1993simple}, and Prentice and
Cai~\cite{prentice1992covariance}, with a recent survey by Janssen and
Veraverbeke~\cite{janssen2024nonparametric}. This
corresponds to the special case $D(\theta)=\prod_j(-\infty,\theta_j]$
of our projection mechanism. The analogous projection onto
$D(\theta)=\prod_j[\theta_j,\infty)$ arises in models subject to lower detection limits; see Lachos et
al.~\cite{lachos2017finite}, Wang~\cite{wang2023multivariate}, and Perlino et
al.~\cite{perlino2025bayesian}. The distinguishing feature of
\eqref{eq:projected-observation} is that both the latent object and the
censoring set are path-valued and time-dependent. Interval-censored multivariate data, where the observation specifies a
compatible set such as $T\in(L,R]$ rather than a projected value
$Y=\Pi_D(X)$, are conceptually related but structurally distinct; see Zeng et
al.~\cite{zeng2017maximum} and Du and Yu~\cite{du2024regression}. Both forms of censoring can be viewed as
partial localization of a latent variable within the coarse-data framework of
Heitjan and Rubin~\cite{heitjan1991ignorability}.

The paper is organized as follows.
Section~\ref{sec:model-preliminaries} introduces the projection-censoring
framework and establishes the identity for uncensored increments.
Section~\ref{sec:kernel-estimator} constructs the estimator and provides
fixed-bandwidth and adaptive risk bounds. Section~\ref{sec:minimax-rate}
establishes the minimax lower bound in the full-observation experiment and
deduces its censored-observation counterpart.
Section~\ref{sec:simulation} reports the simulation study. Proofs,
implementation details, and auxiliary results are collected in the
appendices.

\vspace{5 mm}
\textit{Notation.} For \(\boldsymbol h=(h_1,\ldots,h_d)\in(0,\infty)^d\), define
\(x/\boldsymbol h:=(x_1/h_1,\ldots,x_d/h_d)^\top\). For a kernel \(K:\mathbb R^d\to\mathbb R\), set
\(K_{\boldsymbol h}(x):=(\prod_{j=1}^d h_j)^{-1}K(x/\boldsymbol h)\). We write \(\mathbb N=\{1,2,\ldots\}\) and
\(\mathbb N_0=\{0,1,2,\ldots\}\). Vectors are column vectors. For
\(x,y\in\mathbb R^m\), we use
\(\langle x,y\rangle_m:=x^\top y\) and
\(|x|_m:=\langle x,x\rangle_m^{1/2}\). When the ambient dimension is
clear, we simply write \(\langle x,y\rangle\) and \(|x|\). The canonical
basis of \(\mathbb R^m\) is denoted by \((e_1,\ldots,e_m)\). 

Let \(d_1,d_2\in\mathbb N\). For a measurable set
\(A\subset\mathbb R^{d_1}\) and a measurable function
\(u:A\to\mathbb R^{d_2}\), set
\(\|u\|_{p,A}:=(\int_A |u(x)|_{d_2}^p\,dx)^{1/p}\) for
\(1\le p<\infty\), and
\(\|u\|_{\infty,A}:=\operatorname*{ess\,sup}_{x\in A}|u(x)|_{d_2}\). If
\(A=\mathbb R^{d_1}\), we simply write \(\|u\|_p\) and
\(\|u\|_\infty\). For \(p=2\), we also write
\(\langle u,v\rangle_{2,A}:=\int_A\langle u(x),v(x)\rangle_{d_2}\,dx\),
and, if \(A=\mathbb R^{d_1}\), \(\langle u,v\rangle_2\). Given a
nonnegative weight function \(\mu\), define
\(\|u\|_{p,\mu,A}:=(\int_A |u(x)|_{d_2}^p\mu(x)\,dx)^{1/p}\). If
\(A=\mathbb R^{d_1}\), we write \(\|u\|_{p,\mu}\).

\section{Model and assumptions}
\label{sec:model-preliminaries}

This section fixes the probabilistic framework used throughout the paper.
Using a generalized Itô formula, we first derive a semimartingale identity showing that $dY$ coincides with
$dX$ at uncensored times. We then identify \(\overline f\) as
the time-averaged occupation density of the uncensored observations; this is
the target of the estimator \(\widehat f\) defined in
Section~\ref{subsec:nw-estimator}. Its positivity on the estimation set is
proved under a simple visibility condition, which allows us to analyze the
ratio estimator \(\widehat{bf}/\widehat f\) in
Sections~\ref{subsec:nw-estimator} and~\ref{subsec:gl-selection}.

\subsection{Semimartingale decomposition of the censored process}
\label{subsec:projected-observation}

Let \(X=(X(t))_{t\in[0,T]}\) be an \(\mathbb R^d\)-valued continuous
semimartingale, and let \(\theta=(\theta(t))_{t\in[0,T]}\) be a \(\Theta\)-valued
continuous semimartingale, where \(\Theta\subset\mathbb R^{p}\) is open.
For each \(\theta\in\Theta\), let \(D(\theta)\subset\mathbb R^d\) be nonempty, closed,
and convex, and let \(\Pi_{D(\theta)}\) denote the Euclidean projection onto \(D(\theta)\).
We use the notation
\begin{align*}
        Z(t)&=(\theta(t),X(t)), &
        F(z)=F(\theta,x)&=\Pi_{D(\theta)}x, &
        Y(t)&=F(Z(t)),\\
        \mathcal U_D
        &:=\{(\theta,x)\in\Theta\times\mathbb R^d:
        x\in\operatorname{Int}D(\theta)\}, &
        \delta(t)
        &:=\mathbf 1_{\{Z(t)\in\mathcal U_D\}} .
\end{align*}

Equivalently,
\(\delta(t)=\mathbf 1_{\{X(t)\in\operatorname{Int}D(\theta(t))\}}\).
In dimension \(d=1\), if \(D(\theta)=(-\infty,\theta]\), then
\(Y(t)=X(t)\wedge \theta(t)\) and
\(\delta(t)=\mathbf 1_{\{X(t)<\theta(t)\}}\), which is the right-censoring
scheme considered in Huang~\cite{huang2026censoredsde}. In the cases considered below, \(\mathcal U_D\) is Borel and, in fact, open.
Since \(Z\) is continuous and adapted, it is predictable; hence
\(\delta=\mathbf 1_{\{Z\in\mathcal U_D\}}\) is also predictable. Moreover, it is bounded by 1; thus, the stochastic
integrals below are well defined. Throughout this article, inequalities between vectors are understood componentwise. We begin with two elementary definitions from optimization theory.

\begin{definition}
A set \(P\subset\mathbb R^\ell\) is called a polyhedron if there exist \(r\in\mathbb N\), \(\Gamma\in\mathbb R^{r\times \ell}\), and \(\gamma\in\mathbb R^r\) such that \(P=\{u\in\mathbb R^\ell:\Gamma u\le\gamma\}\). For an optimization problem \(\min_u \phi(u)\) subject to \(h(u)\le0\), its feasible set is \(\{u:h(u)\le0\}\); the problem is feasible if this set is nonempty, and any point in it is called feasible.
\end{definition}

We now impose a polyhedral structure on the moving censoring sets so that the censoring problem can be treated using existing quadratic programming theory.

\begin{assumption}
\label{ass:proj_decomp}
There exist \(m\ge1\), \(M\in\mathbb R^{m\times d}\),
\(c_0\in\mathbb R^m\), and \(\mathsf B_D\in\mathbb R^{m\times p}\) such that, for every \(\theta\in\Theta\), \(D(\theta)\) is the polyhedron
\begin{align*}
    D(\theta)&=\{y\in\mathbb R^d:My\le c(\theta)\}
    = \bigcap_{j=1}^m \{y\in\mathbb R^d:m_j^\top y\le c_j(\theta)\},
    & c(\theta)&:=c_0+\mathsf B_D\theta. \tag{Q}\label{eq:Q}
\end{align*}
Here \(m_j^\top\) denotes the \(j\)-th row of \(M\). Moreover, we assume that
\begin{align}
&\forall \theta\in\Theta,\quad \exists y_\theta\in\mathbb R^d
\text{ such that } My_\theta<c(\theta).
\tag{S}\label{eq:Slater}
\end{align}
\end{assumption}

Under Assumption~\ref{ass:proj_decomp}, for every $\theta\in \Theta$, $\Pi_{D(\theta)} x$ is the unique solution to the following linearly constrained quadratic program:
\begin{align} \label{optimization_problem}
    \min _{y \in \mathbb{R}^d} \frac{1}{2}|y-x|^2 \quad \text { such that } \quad M y \leq c_0+\mathsf B_D\theta .
\end{align}
Moreover,
\(
\mathcal U_D
=\{(\theta,x)\in\Theta\times\mathbb R^d:Mx<c(\theta)\}
=\bigcap_{j=1}^m\{(\theta,x):m_j^\top x<c_j(\theta)\},
\)
which is open because it is a finite intersection of open sets. 
\begin{remark}
    \begin{enumerate}
    [label=\textup{(\roman*)}, leftmargin=*, itemsep=0.35em]
    \item The affine form \(c(\theta)=c_0+\mathsf B_D\theta\) in condition~\eqref{eq:Q} makes the
feasible set in~\eqref{optimization_problem} depend affinely on the joint parameter
\((\theta,x)\), which is used in Proposition~\ref{prop:projection-DC} to show
that the projection map \((\theta,x)\longmapsto \Pi_{D(\theta)}x\) is jointly piecewise affine in \((\theta,x)\). Smooth
nonlinear dependence on \(\theta\) may still lead to a locally regular projection map
under additional regularity assumptions, but
it would not follow from the same argument used here; see Still~\cite[Chapter 6]{still2018lectures} and the references therein.

        \item Condition~\eqref{eq:Slater} guarantees not only that the problem \eqref{optimization_problem} is feasible, but also that 
\[
        \operatorname{Int}D(\theta)
        =
        \bigcap_{j=1}^m
        \operatorname{Int}\{y:m_j^\top y\le c_j(\theta)\}
        =
        \bigcap_{j=1}^m
        \{y:m_j^\top y<c_j(\theta)\}
        =
        \{y:My<c(\theta)\}
        \ne \emptyset .
\]
In the above display, the only nontrivial step is the second identity, and it is
guaranteed by condition~\eqref{eq:Slater}. Indeed, if \(m_j\ne \mathbf 0\), where
\(\mathbf 0\) denotes the zero vector, then
\(\{y:m_j^\top y\le c_j(\theta)\}\) is a half-space and
\(        \operatorname{Int}\{y:m_j^\top y\le c_j(\theta)\}
        =
        \{y:m_j^\top y<c_j(\theta)\}.
\)
If \(m_j=\mathbf 0\), condition~\eqref{eq:Slater} forces \(c_j(\theta)>0\), so
\(
        \{y:\mathbf 0^\top y\le c_j(\theta)\}
        =
        \mathbb R^d
        =
        \{y:\mathbf 0^\top y<c_j(\theta)\}.
\)
Notice that, without condition~\eqref{eq:Slater}, the latter identity may fail. For instance,
if \(m_j=\mathbf 0\) and \(c_j(\theta)=0\), then
\(\{y:\mathbf 0^\top y\le c_j(\theta)\}=\mathbb R^d\), whereas
\(\{y:\mathbf 0^\top y<c_j(\theta)\}=\emptyset\). Moreover, replacing \(My_\theta<c(\theta)\) in condition~\eqref{eq:Slater} with the weaker
condition \(My_\theta\le c(\theta)\) would only guarantee that \(D(\theta)\) is nonempty; its
interior may still be empty, for example, when
\(D(\theta)=\{y\in\mathbb R:y\le0,\ -y\le0\}=\{0\}\).

\end{enumerate}
\end{remark}

We will use the following standard fact from multi-parametric quadratic programming.

\begin{lemma}
\label{lem:mpqp-pwa}
Let \(\Xi\subset\mathbb R^s\) be a nonempty polyhedron, let \(\mathsf H=\mathsf H^\top\succ0\), \(\mathsf A\in\mathbb R^{r\times \ell}\), \(w\in\mathbb R^r\), and \(S\in\mathbb R^{r\times s}\). For \(\xi\in\Xi\), consider
\[
v^\star(\xi)\in\arg\min_{v\in\mathbb R^\ell}\frac12 v^\top \mathsf H v
\quad\text{subject to}\quad \mathsf A v\le w+S\xi .
\]
If the feasible set \(\{v\in\mathbb R^\ell:\mathsf A v\le w+S\xi\}\) is nonempty for every \(\xi\in\Xi\), then \(v^\star(\xi)\) is unique for every \(\xi\in\Xi\), and the map \(\xi\mapsto v^\star(\xi)\) is continuous and piecewise affine on \(\Xi\).
\end{lemma}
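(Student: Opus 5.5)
We would argue in three stages: existence and uniqueness, then a finite partition of \(\Xi\) by active sets with an affine formula on each piece, then continuity.

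\textbf{Existence and uniqueness.} Fix \(\xi\in\Xi\) and write \(P(\xi):=\{v:\mathsf A v\le w+S\xi\}\). This set is a nonempty closed convex polyhedron. The objective \(\frac12 v^\top\mathsf H v\) is strongly convex because \(\mathsf H\succ0\), so it is coercive. Coercivity gives existence of a minimizer on the closed set \(P(\xi)\), and strict convexity on the convex set \(P(\xi)\) gives uniqueness. Since the constraints are affine, Abadie's constraint qualification holds automatically and no Slater condition is needed. Hence \(v\) is optimal if and only if there is a multiplier \(\lambda\ge0\) satisfying the KKT system
\[
\mathsf H v+\mathsf A^\top\lambda=0,\qquad \mathsf A v\le w+S\xi,\qquad \lambda_i(\mathsf A v-w-S\xi)_i=0 .
\]

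\textbf{Piecewise affine structure.} For an index set \(J\subset\{1,\dots,r\}\), let \(\mathsf A_J\) denote the corresponding rows of \(\mathsf A\). We may always choose a multiplier supported on some \(J\) with \(\mathsf A_J\) of full row rank. To see this, start from the support of any multiplier and apply a Carathéodory-type reduction to the cone generated by the active rows. This removes dependent rows while keeping \(\lambda\ge0\) and the same \(v\).

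For such \(J\), the equations \(\mathsf H v+\mathsf A_J^\top\lambda_J=0\) and \(\mathsf A_J v=w_J+S_J\xi\) are uniquely solvable, because \(\mathsf A_J\mathsf H^{-1}\mathsf A_J^\top\) is invertible. The solution is
\[
\lambda_J(\xi)=-(\mathsf A_J\mathsf H^{-1}\mathsf A_J^\top)^{-1}(w_J+S_J\xi),\qquad v_J(\xi)=-\mathsf H^{-1}\mathsf A_J^\top\lambda_J(\xi),
\]
which is affine in \(\xi\). Define the critical region
\[
R_J:=\{\xi\in\Xi:\lambda_J(\xi)\ge0,\ \mathsf A v_J(\xi)\le w+S\xi\}.
\]
Each \(R_J\) is a polyhedron, since it is cut out by finitely many affine inequalities in \(\xi\) together with those defining \(\Xi\). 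By the KKT sufficiency, \(v^\star(\xi)=v_J(\xi)\) on \(R_J\). By the reduction step, every \(\xi\in\Xi\) lies in some \(R_J\). So \(\Xi\) is covered by finitely many polyhedra, and \(v^\star\) is affine on each of them. This is exactly the definition of piecewise affine.

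\textbf{Continuity.} A function that is affine on each of finitely many closed sets covering \(\Xi\) is continuous at \(\xi\) provided the affine pieces agree on overlaps. Here they do, since each piece equals the unique minimizer \(v^\star(\xi)\) wherever it applies. Take any sequence \(\xi_n\to\xi\). Pass to subsequences lying in a single closed region \(R_J\). Because \(R_J\) is closed, \(\xi\in R_J\), so \(v^\star(\xi_n)=v_J(\xi_n)\to v_J(\xi)=v^\star(\xi)\). As there are only finitely many regions, the full sequence converges.

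\textbf{Main obstacle.} The delicate step is the full-rank reduction of the active set, which guarantees that the covering by the \(R_J\) is complete. The plan is to argue via the conic Carathéodory theorem. The vector \(-\mathsf H v^\star\) lies in the cone generated by the active rows \(\{\mathsf A_i^\top\}\). It is therefore a nonnegative combination of a linearly independent subfamily of those rows. Taking \(J\) to be that subfamily gives the required multiplier and fits the definition of \(R_J\).
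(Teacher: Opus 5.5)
Your proposal is correct, and it follows the classical KKT/critical-region argument that the paper cites rather than proves (Bemporad et al., Section~4; T{\o}ndel et al., Theorem~1): affine formulas for $(v,\lambda_J)$ on each active set, polyhedral critical regions covering $\Xi$, and continuity by pasting over finitely many closed pieces. Your conic Carath\'eodory reduction to a linearly independent active subfamily is the standard way to cover degenerate parameters where LICQ fails, which those references treat separately, so your write-up is a complete, self-contained version of the cited result.
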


This is a classical result in multi-parametric quadratic programming; see Bemporad et al.~\cite[Section~4]{bemporad2002explicit}, and also T{\o}ndel et al.~\cite[Theorem~1]{tondel2001algorithm}. Using Lemma~\ref{lem:mpqp-pwa}, we deduce a semimartingale decomposition of $Y$.

\begin{proposition}
\label{prop:projection-DC}
Under Assumption~\ref{ass:proj_decomp}, the following statements hold.
\begin{enumerate}
[label=\textup{(\roman*)}, leftmargin=*, itemsep=0.35em]
    \item For each \(j=1,\ldots,d\), the coordinate map
\(
F_j(\theta,x):=\big(\Pi_{D(\theta)}x\big)_j
\)
is locally difference-of-convex (DC) on \(\Theta\times\mathbb R^d\).
That is, every point of \(\Theta\times\mathbb R^d\) has an open convex
neighborhood on which \(F_j\) can be written as the difference of two convex
functions.
\item Let \(O:=\Theta\times\mathbb R^d\). For each \(j\), let
\((U_{j,n})_{n\ge1}\) be a countable open cover of \(O\) by open convex sets,
and let \(G_{j,n},H_{j,n}\) be convex functions on \(U_{j,n}\) such that
\(F_j=G_{j,n}-H_{j,n}\) on \(U_{j,n}\). Set \(C_{j,1}:=U_{j,1}\) and
\(C_{j,n}:=U_{j,n}\setminus\bigcup_{k<n}U_{j,k}\) for \(n\ge2\). Let
\(G^*_{j,n}:U_{j,n}\to\mathbb R^{p+d}\) and \(H^*_{j,n}:U_{j,n}\to\mathbb R^{p+d}\) be measurable functions such that \(G^*_{j,n}(z)\in\partial G_{j,n}(z)\) and \(H^*_{j,n}(z)\in\partial H_{j,n}(z)\), where \(\partial\) denotes the convex subdifferential. Define
\[
F_j^*(z):=\sum_{n\ge1}\mathbf 1_{C_{j,n}}(z)
\big(G^*_{j,n}(z)-H^*_{j,n}(z)\big),\qquad z\in O.
\]
Here \(F_j^*(z)=(F^*_{j,1}(z),\ldots,F^*_{j,p+d}(z))^\top\).
Then \(Y_j=F_j(Z)\) is a continuous semimartingale and admits the
decomposition
\begin{equation}
\label{eq:Y-semimartingale-decomp}
Y_j(t)=Y_j(0)+\sum_{r=1}^{p+d}\int_0^t F^*_{j,r}(Z(s))\,dZ_r(s)+A_j(t),
\qquad t\in[0,T],
\end{equation}
where \(A_j\) is a continuous finite-variation process with \(A_j(0)=0\).
\end{enumerate}

\end{proposition}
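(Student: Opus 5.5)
Part (i) reduces to Lemma~\ref{lem:mpqp-pwa} by a change of variables, and part (ii) is then the generalized Itô formula for DC functions.

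For (i), substitute \(v=y-x\). Problem~\eqref{optimization_problem} becomes
\[
\min_{v}\tfrac12 v^\top v \quad\text{subject to}\quad Mv\le c_0+\mathsf B_D\theta-Mx .
\]
This has the form of Lemma~\ref{lem:mpqp-pwa} with \(\mathsf H=I_d\), \(\mathsf A=M\), \(w=c_0\), parameter \(\xi=(\theta,x)\in\mathbb R^{p+d}\), and \(S=[\mathsf B_D,\,-M]\). Let \(z_0=(\theta_0,x_0)\in O\). Since \(\Theta\) is open, we can pick a closed box \(\Xi\subset\Theta\times\mathbb R^d\) containing a neighborhood of \(z_0\). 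A box is a polyhedron.

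Feasibility holds on \(\Xi\) by~\eqref{eq:Slater}: for each \((\theta,x)\in\Xi\), the point \(v=y_\theta-x\) is feasible. Lemma~\ref{lem:mpqp-pwa} then says \(v^\star\) is continuous and piecewise affine on \(\Xi\). Hence \(F(\theta,x)=x+v^\star(\theta,x)\) is continuous piecewise affine there, and so is each coordinate \(F_j\).

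It remains to show that a continuous piecewise affine function on a convex polyhedron is DC. I would use the max–min representation of continuous PWA functions (Ovchinnikov / Kripfganz–Schulze): there are finitely many affine pieces \(\ell_1,\dots,\ell_K\) and index sets \(I_1,\dots,I_L\) with \(f=\max_{a}\min_{k\in I_a}\ell_k\). Then
\[
\min_{k\in I_a}\ell_k=-\max_{k\in I_a}(-\ell_k),
\]
and max/sum rules give \(f=g-h\) with \(g,h\) finite convex functions. This representation is stated for PWA functions on all of \(\mathbb R^s\) or on convex polyhedral domains. Restricting to the open interior of the box gives the required open convex neighborhood.

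This is the step I expect to need the most care. The issue is that the "pieces" from Lemma~\ref{lem:mpqp-pwa} (critical regions) are finitely many polyhedra covering \(\Xi\), and one must check that the max–min lemma applies in this setting. As a fallback, I would bound the jumps of the gradient across the finitely many hyperplanes separating regions. One then builds \(h\) as a sum of terms \(C\,|\langle a_i,\cdot\rangle-b_i|\) large enough to make \(f+h\) convex, using that a continuous PWA function is convex iff it is locally convex across each facet.

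For (ii), first note that \(O\) is an open subset of \(\mathbb R^{p+d}\), hence Lindelöf. So the cover from (i) admits a countable subcover \((U_{j,n})\), and the \(C_{j,n}\) partition \(O\) into Borel sets. Measurable subgradient selections exist because subdifferentials of finite convex functions are closed-valued and measurable; alternatively one can take one-sided partial derivatives, which are Borel. Thus \(F_j^*\) is a Borel, locally bounded function, since convex functions are locally Lipschitz on open sets.

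The decomposition then follows from the Itô–Tanaka/Bouleau formula for DC functions of continuous semimartingales. I would cite it (e.g.\ Bouleau, or Carlen–Protter: \(f(Z)\) is a semimartingale for locally DC \(f\)) or reprove it by localization. On a stopping-time interval during which \(Z\) remains in a compact subset of a single \(U_{j,n}\), apply the convex Itô–Tanaka formula to \(G_{j,n}(Z)\) and \(H_{j,n}(Z)\). This gives stochastic integrals against subgradients plus continuous increasing processes. Differences of two choices of subgradient selections are absorbed into \(A_j\): for a convex function, the integral of a selection differs from that of another selection only by a finite-variation term. This is the subtle point to verify; alternatively one can argue that a different choice changes only the martingale part on a \(d\langle Z\rangle\)-null set.

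Finally, patch the local pieces together along a sequence of stopping times at which \(Z\) leaves \(U_{j,n}\), using continuity of \(Z\) and compactness of \(Z([0,T])\). Each local piece contributes a finite-variation term, and only finitely many pieces are needed on each path, so \(A_j\) is a finite sum of continuous finite-variation pieces. This gives~\eqref{eq:Y-semimartingale-decomp}. Local boundedness of \(F^*_j\) along \(Z\) ensures the stochastic integrals are well defined.
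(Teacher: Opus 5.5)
Your argument is the paper's proof: the substitution $y=x+u$ on a closed box around the point, feasibility from \eqref{eq:Slater}, Lemma~\ref{lem:mpqp-pwa} for continuity and piecewise affinity, Kripfganz--Schulze for the DC splitting, and Bouleau's generalized It\^o formula for (ii), so your fallback and localization sketches are not needed. One side remark needs correcting: local boundedness of $F_j^*$ does not follow from local Lipschitz continuity of convex functions alone, since $C_{j,n}$ may accumulate at $\partial U_{j,n}$, where $\partial G_{j,n}$ can blow up; it does hold for the piecewise affine $G_{j,n},H_{j,n}$ on the boxes produced in (i), whose subgradients are bounded.
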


The proof of Proposition~\ref{prop:projection-DC} is given in
Appendix~\ref{Sec:pf-projection-DC}. The first assertion is not a
consequence of the closed convexity of \(D(\theta)\subset\mathbb R^d\) alone.
Indeed, locally DC functions possess one-sided directional derivatives,
whereas it is known that the projection onto a fixed closed convex set
need not be directionally differentiable in general; see
Shapiro~\cite[Introduction]{shapiro2016differentiability} and the references therein. Thus,
Assumption~\ref{ass:proj_decomp} should be understood as a nontrivial
geometric regularity assumption. The second assertion is a generalized Itô formula for locally DC
functions. Its validity in the globally DC case is guaranteed by
Bouleau~\cite[Theorem~3]{bouleau1984formules}, and the extension to
locally DC functions is explained in the discussion following that result.

\begin{theorem}
\label{thm:semimartingale-identity}
Under Assumption~\ref{ass:proj_decomp}, for each \(j=1,\ldots,d\),
in the decomposition \eqref{eq:Y-semimartingale-decomp} of
Proposition~\ref{prop:projection-DC}, the following assertions hold.
\begin{enumerate}[label=\textup{(\roman*)}, leftmargin=*, itemsep=0.35em]
\item The functions \(F_j^*\) and the finite-variation terms \(A_j\) may be
chosen so that
\begin{align}
&F^*_{j,r}(\theta,x)
=
\begin{cases}
0, & 1\le r\le p,\\
\mathbf 1_{\{r=p+j\}}, & p+1\le r\le p+d,
\end{cases}
\qquad (\theta,x)\in\mathcal U_{D},
\label{eq:Fstar-on-U}
\\&
\int_0^T \delta(t)\,d|A_j|(t) =0
\label{eq:A-vanishes-on-U}
\end{align}

\item Let \(\Phi=(\Phi(t))_{t\in[0,T]}\) be a real-valued predictable process such
that the stochastic integrals below are well defined. Then, for each
\(j=1,\ldots,d\),
\begin{equation}
\label{eq:pathwise-identity}
\int_0^T \delta(t)\Phi(t)\,dY_j(t)
=
\int_0^T \delta(t)\Phi(t)\,dX_j(t).
\end{equation}
\end{enumerate}
\end{theorem}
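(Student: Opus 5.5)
The key observation is that on the open set \(\mathcal U_D\) the projection is the identity in \(x\), that is, \(F_j(\theta,x)=x_j\). I would exploit the freedom in Proposition~\ref{prop:projection-DC}(ii) to choose a DC cover adapted to \(\mathcal U_D\). Concretely, cover \(\mathcal U_D\) by countably many open balls \(B\) contained in \(\mathcal U_D\); this is possible because \(\mathcal U_D\) is open. On each such ball take \(G(\theta,x)=x_j\) and \(H\equiv0\). Both are affine, hence convex, with subdifferentials \(\{e_{p+j}\}\) and \(\{0\}\), so the selections are forced to be \(G^*=e_{p+j}\) and \(H^*=0\). Then cover \(O\setminus\mathcal U_D\) by countably many of the convex neighbourhoods supplied by Proposition~\ref{prop:projection-DC}(i). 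Lindelöf's property of \(O\subset\mathbb R^{p+d}\) gives a countable subcover. Enumerate the balls inside \(\mathcal U_D\) first. Then, since every \(z\in\mathcal U_D\) lies in some such ball, the disjointification \(C_{j,n}\) assigns \(z\) to a ball inside \(\mathcal U_D\), and hence \(F_j^*(z)=e_{p+j}\). This is exactly \eqref{eq:Fstar-on-U}.

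For \eqref{eq:A-vanishes-on-U}, define \(A_j\) through \eqref{eq:Y-semimartingale-decomp} with this choice of \(F_j^*\). I would localize in time. Fix a ball \(B\) with \(\overline B\subset\mathcal U_D\), and consider a random interval \([\sigma,\tau]\) with \(\sigma\) a stopping time and \(\tau\) the exit time of \(Z\) from \(B\) after \(\sigma\). On \([\sigma,\tau]\) we have \(Y_j=X_j\) identically, and the stochastic integral term equals \(\int_\sigma^{\cdot} dX_j\), since \(F^*_{j}(Z(s))=e_{p+j}\) there. Taking differences in \eqref{eq:Y-semimartingale-decomp} gives \(A_j(t\wedge\tau)-A_j(\sigma)=0\), so \(A_j\) is constant on such intervals.

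To pass to \(\{t:Z(t)\in\mathcal U_D\}\), cover this open set by countably many balls \(B_k\), for example with rational centres and radii, satisfying \(\overline{B_k}\subset\mathcal U_D\). The random open set \(\{t:Z(t)\in\mathcal U_D\}\) is a countable union of open intervals, by continuity of \(Z\). Each component interval is a countable union of subintervals on which \(Z\) stays in a single \(B_k\), for instance by using rational start times \(q\) with stopping times \(\sigma=q\) and the exit time from a ball containing \(Z(q)\). Hence \(d|A_j|\) charges none of these subintervals, and \(\int_0^T\delta\,d|A_j|=0\) follows by countable additivity.

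This pathwise localization with countably many rational times and balls is the main technical step. The difficulty is less in any inequality than in making sure that the identity \(A_j=\)const holds simultaneously a.s. for all relevant intervals. Using rational times \(q\) and a countable ball family resolves this, since each stopping-time identity is a.s. and there are only countably many of them.

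Part (ii) then follows directly. Integrate \(\delta\Phi\) against \eqref{eq:Y-semimartingale-decomp}. By associativity of stochastic integrals, \(\int\delta\Phi\,dY_j=\sum_r\int\delta\Phi F^*_{j,r}(Z)\,dZ_r+\int\delta\Phi\,dA_j\). By \eqref{eq:Fstar-on-U}, \(\delta F^*_{j,r}(Z)=\delta\mathbf 1_{\{r=p+j\}}\), so the sum reduces to \(\int\delta\Phi\,dX_j\). The last term is bounded by \(\int\delta|\Phi|\,d|A_j|=0\) by \eqref{eq:A-vanishes-on-U}, since \(\Phi\) is locally bounded or integrable with respect to \(|A_j|\) under the standing well-definedness hypothesis.
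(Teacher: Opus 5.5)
Your proof of \eqref{eq:A-vanishes-on-U} (granted \eqref{eq:Fstar-on-U}) and your proof of part (ii) are fine. The step that yields \eqref{eq:Fstar-on-U}, however, does not work as written. Proposition~\ref{prop:projection-DC}(ii) is stated for a cover indexed by $n\ge1$, with $C_{j,n}=U_{j,n}\setminus\bigcup_{k<n}U_{j,k}$. Infinitely many balls are needed to cover $\mathcal U_D$ (it is unbounded in Example~\ref{ex:censoring_regime}(a), for instance). So no set covering a point of $O\setminus\mathcal U_D$ can be placed after all of them in such a sequence, and ``enumerate the balls first'' is impossible.

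Once you interleave the two families, a point $z\in\mathcal U_D$ may first be met by a non-ball set $U_{j,n}$. There $F_j^*(z)=G^*_{j,n}(z)-H^*_{j,n}(z)$ is an arbitrary difference of subgradients and need not equal $e_{p+j}$. Already for $G(u)=|u|+u$ and $H(u)=|u|$ on $\mathbb R$, the difference $G^*(0)-H^*(0)$ can be any number in $[-1,3]$.

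This is exactly the point the paper's argument handles. Put $\ell_j(\theta,x):=x_j$. Near any $z\in\mathcal U_D\cap U_{j,n}$ one has $G_{j,n}=H_{j,n}+\ell_j$, hence $\partial G_{j,n}(z)=\partial H_{j,n}(z)+\{e_{p+j}\}$. So $G^*_{j,n}$ may be re-selected as $H^*_{j,n}+e_{p+j}$ on $\mathcal U_D\cap C_{j,n}$, whatever the cover and its ordering.

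Alternatively, your own idea can be rescued as follows:
\begin{enumerate}
\item Take neighbourhoods $V_1,V_2,\ldots\subset O$ from part (i) covering all of $O$.
\item Set $W_k:=V_k\cap\mathcal U_D$. This is open and convex, because $\mathcal U_D=O\cap\{(\theta,x):Mx<c_0+\mathsf B_D\theta\}$.
\item Enumerate $W_1,V_1,W_2,V_2,\ldots$, with $(G,H)=(\ell_j,0)$ on each $W_k$.
\end{enumerate}
The first set containing $z\in\mathcal U_D$ is then $W_k$ for the least $k$ with $z\in V_k$, so $F_j^*(z)=e_{p+j}$.

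Once \eqref{eq:Fstar-on-U} is in place, your route to \eqref{eq:A-vanishes-on-U} is a legitimate alternative to the paper's. The paper fixes $\omega$, splits $\{t:Z(t,\omega)\in\mathcal U_D\}$ into intervals, and invokes Bouleau's Proposition~4, so that $A_j$ is the vanishing second-order term of the affine map $\ell_j$. You instead read $A_j$ directly off \eqref{eq:Y-semimartingale-decomp}, between rational times $q$ and the exit times of $Z$ from rational balls with closure in $\mathcal U_D$, using only the locality of the stochastic integral. This needs no second-order formula. It also makes the null-set bookkeeping explicit (countably many a.s.\ identities), which the paper's pathwise application of a stochastic-calculus result leaves implicit.
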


The proof of Theorem~\ref{thm:semimartingale-identity} is given in
Appendix~\ref{Sec:pf-lem:pathwise}. The identity \eqref{eq:A-vanishes-on-U} also has a useful interpretation.
For almost every $\omega$, the total-variation measure $ d|A_j|(\omega, \cdot)$ assigns zero mass to the set of times at which $Z(t, \omega) \in \mathcal{U}_D$:
\[
 d|A_j|(\omega,\{t\in[0,T]:Z(t,\omega)\in\mathcal U_D\})=0 .
\]
In particular, the finite-variation correction term in $dY$ generated by the projection does not contribute when weighted by \(\delta(t)\), which yields the identity \eqref{eq:pathwise-identity}. Unlike the classical complete-case identity \eqref{expectation} for censored covariates recalled in Section~\ref{sec:introduction}, \eqref{eq:pathwise-identity} is a pathwise identity and requires no conditional-independence assumption between the latent process and the censoring mechanism. This identity is the key ingredient in the subsequent
analysis of our estimator. We next give several projection maps with
locally DC coordinates:
\(
(\theta,x)\longmapsto \Pi_{D(\theta)}x
\).

\begin{example}
\label{ex:censoring_regime}
\begin{enumerate}[label=(\alph*), leftmargin=*, itemsep=0.35em]

\item \emph{Coordinatewise right censoring.}
Let \(\Theta=\mathbb R^d\) and
\(
D(\theta)=\prod_{j=1}^d(-\infty,\theta_j].
\)
Then
\[
F_j(\theta,x)=x_j\wedge \theta_j=x_j-(x_j-\theta_j)^+,
\qquad
\mathcal U_{D}=\{(\theta,x):x_j<\theta_j,\ j=1,\ldots,d\}.
\]

\item \emph{Random band.}
Fix a unit vector \(n\in\mathbb R^d\). Let
\(\Theta=\{(L,U)\in\mathbb R^2:L<U\}\) and
\(
D(L,U)=\{x\in\mathbb R^d:L\le \langle x,n\rangle\le U\}.
\)
Then
\[
F(L,U,x)
=
 x+(L-\langle x,n\rangle)^+n
 -(\langle x,n\rangle-U)^+n,
\qquad
\mathcal U_{D}=\{(L,U,x):L<\langle x,n\rangle<U\}.
\]

\item \emph{Random ball.}
Let \(\Theta=(0,\infty)\) and \(D(r)=\overline B(0,r)\). Then
\[
F(r,x)=x\mathbf 1_{\{|x|\le r\}}
+r\frac{x}{|x|}\mathbf 1_{\{|x|>r\}},
\qquad
\mathcal U_{D}=\{(r,x):|x|<r\}.
\]

\end{enumerate}
\end{example} 

\begin{remark}
\label{rem:scope-proj-decomp}
We now explain how the preceding examples relate to
Assumption~\ref{ass:proj_decomp}.

\begin{enumerate}
[label=\textup{(\roman*)}, leftmargin=*, itemsep=0.35em]
\item 
Example~\ref{ex:censoring_regime}(a) satisfies Assumption~\ref{ass:proj_decomp}. Indeed,
\(D(\theta)=\{y:y_j\le \theta_j,\ j=1,\dots,d\}\), so one may take \(M=I_d\) and
\(c(\theta)=\theta\). Condition~\eqref{eq:Slater} holds, for example, with
\(y_\theta=\theta-\mathbf 1_d\).

Example~\ref{ex:censoring_regime}(b) is also covered. In this case
\(M=(-n^\top,n^\top)^\top\) and
\(c(L,U)=(-L,U)^\top\). Condition~\eqref{eq:Slater} holds by taking
\(y_{L,U}=((L+U)/2)n\).

\item 
Example~\ref{ex:censoring_regime}(c) is not covered by
Assumption~\ref{ass:proj_decomp}. Nevertheless, it
satisfies the locally DC condition required in Proposition~\ref{prop:projection-DC}.
Indeed,
\[
\Pi_{\overline B(0,r)}x
=
x-(|x|-r)^+\mathbf 1_{\{x\ne0\}}\frac{x}{|x|}.
\]
Near any point $(r_0,0)$ with $r_0>0$, the projection coincides with
the identity on a neighborhood. Away from $x=0$, each coordinate of
the map $x\mapsto x/|x|$ is smooth and hence locally DC, whereas
$(r,x)\mapsto (|x|-r)^+$ is convex. Since the product of two DC
functions on a finite-dimensional open convex set is again DC
\cite[Corollary following Theorem~(II), p.~708]{Hartman1959},
it follows, by applying this result locally, that each coordinate of
$(r,x)\mapsto\Pi_{\overline B(0,r)}x$ is locally DC. Since
Assumption~\ref{ass:proj_decomp} is used in Proposition~\ref{prop:projection-DC}
only to obtain this locally DC property, Proposition~\ref{prop:projection-DC} and Theorem~\ref{thm:semimartingale-identity} therefore remain valid for random balls.

\item The literature on killed diffusions studies processes sent to a cemetery
state either upon first hitting the boundary of a fixed domain or at a random
time governed by a state-dependent killing rate; see, e.g., Casella and
Roberts~\cite{casella2008exact}, Del Moral and
Villemonais~\cite{delmoral2018exponential}, and Nickl and
Seizilles~\cite{nickl2025inferring}. In the context of weak Euler approximation for such processes, Gobet~\cite[Proposition~3.1]{gobet2000weak} studies the projected
process \(Y(t)=\Pi_{D}X(t)\) and derives its semimartingale
decomposition for \(C^3\) domains. The special case of
Example~\ref{ex:censoring_regime}(c) with constant \(r>0\), for which \(D\)
is a fixed ball, falls within this setting. Locally, a \(C^3\) change of
coordinates flattens the boundary into a half-space, allowing the
one-dimensional Tanaka formula to be applied in the normal direction. By
contrast, we use a multidimensional generalized It\^{o} formula and only
require the projection map to be locally DC, which also covers the nonsmooth
censoring sets in Examples~\ref{ex:censoring_regime}(a) and
\ref{ex:censoring_regime}(b). The resulting finite-variation term in
Proposition~\ref{prop:projection-DC} is, however, more abstract than Gobet's
boundary term and is not generally identified with boundary local time.
\end{enumerate}
\end{remark}

\subsection{SDE structure and positivity of the uncensored occupation density}
\label{subsec:model-assumptions}

We now return to the statistical diffusion model introduced in the
Introduction. We work on a filtered probability space
\((\Omega,\mathcal F,\{\mathcal F(t)\}_{t\ge0},\mathbb P)\) satisfying the
usual conditions. Let $W^{(1)}, \ldots, W^{(N)}$ be mutually independent $q$-dimensional standard Brownian motions. For each $i$, let $X^{(i)}$ denote the unique strong solution to \eqref{eq:SDE_X} driven by $W^{(i)}$.
The auxiliary processes \(\theta^{(i)}\) take values in \(\Theta\) and generate
the censored observations through the projection mechanism of
Section~\ref{subsec:projected-observation}. When no superscript is displayed,
\(X,\theta,Y,\delta\) denote a generic copy.

\begin{assumption}
\label{ass:standard_X}
The drift \(b\) is globally Lipschitz and of linear growth. The matrix
\(\Sigma\) is bounded, globally Lipschitz, and uniformly elliptic. There
exist constants \(0<\lambda_\Sigma\le \Lambda_\Sigma<\infty\) such that
\[
\lambda_\Sigma |\xi|^2 \le \xi^\top \Sigma(x)\Sigma(x)^\top \xi
\le \Lambda_\Sigma |\xi|^2,
\qquad x,\xi\in\mathbb R^d.
\]
Moreover, \(\Sigma\in C_b^{1+\alpha_\Sigma}(\mathbb R^d)\) for some
\(\alpha_\Sigma\in(0,1)\).
\end{assumption}

The Lipschitz and growth conditions ensure the existence of a unique strong solution to~\eqref{eq:SDE_X}; since the Brownian motions
\(W^{(i)}\) are mutually independent and \(x_0\) is deterministic, the
solutions \(X^{(i)}\) are i.i.d. The additional conditions on \(\Sigma\) allow us to apply the following density estimates, adapted from Menozzi, Pesce, and Zhang~\cite[Theorem~1.2]{menozzi2021density}.

\begin{proposition}
\label{ass:gaussian-density}
Under Assumption~\ref{ass:standard_X}, the SDE \eqref{eq:SDE_X} admits a
unique strong Markov solution, and for every \(t>0\), the transition
kernel \(P_t(x_0,dy)\) admits a density \(p_t(x_0,\cdot)\) with respect
to Lebesgue measure. Moreover, for every compact set
\(\mathcal C\subset\mathbb R^d\) and every interval \([a,b]\subset(0,T]\),
\[
0<
\inf_{t\in[a,b]}\inf_{x\in \mathcal C}p_t(x_0,x).
\]
In addition, there exist constants \(C_p,\mathfrak m_p, C_\nabla,\mathfrak m_\nabla>0\) such that, for
every \(t\in(0,T]\) and \(x\in\mathbb R^d\),
\begin{align*}
   &p_t(x_0,x)
\le
C_p t^{-d/2}
\exp\!\left(-\mathfrak m_p\frac{|x-x_0|^2}{t}\right), \qquad |\nabla_x p_t(x_0,x)|
\le
C_\nabla t^{-(d+1)/2}
\exp\!\left(-\mathfrak m_\nabla\frac{|x-x_0|^2}{t}\right).
\end{align*}
\end{proposition}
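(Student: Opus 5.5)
The statement collects three facts about the transition density of \eqref{eq:SDE_X}: existence and uniqueness of a strong Markov solution with an absolutely continuous law, Gaussian upper bounds for \(p_t\) and \(\nabla_x p_t\) on \((0,T]\), and a strictly positive lower bound on compacts for \(t\in[a,b]\). I would derive everything from Menozzi, Pesce and Zhang~\cite[Theorem~1.2]{menozzi2021density}, supplemented by classical facts for the parts that theorem may not state directly.

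\emph{Step 1: existence, uniqueness and the Markov property.} Under Assumption~\ref{ass:standard_X}, \(b\) and \(\Sigma\) are globally Lipschitz with linear growth. The standard Itô theory therefore gives a unique strong solution, and since the coefficients are time-homogeneous this solution is a strong Markov process.

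\emph{Step 2: existence of the density and the two-sided heat-kernel bounds.} I would check the hypotheses of Menozzi--Pesce--Zhang: the drift is measurable with linear growth (in particular locally bounded), and the diffusion matrix \(a=\Sigma\Sigma^\top\) is bounded, uniformly elliptic and Hölder continuous. Their theorem then yields a density \(p_t(x_0,\cdot)\). It also gives two-sided Gaussian bounds in which the Gaussian is centred at the deterministic flow \(\vartheta_t(x_0)\) of the drift rather than at \(x_0\):
\[
C^{-1}t^{-d/2}e^{-C|\vartheta_t(x_0)-x|^2/t}\le p_t(x_0,x)\le C t^{-d/2}e^{-C^{-1}|\vartheta_t(x_0)-x|^2/t},
\]
together with an analogous upper bound for \(\nabla_x p_t\) carrying an extra factor \(t^{-1/2}\). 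The gradient bound is where \(\Sigma\in C_b^{1+\alpha_\Sigma}\) is used.

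\emph{Step 3: re-centring at \(x_0\).} Because \(b\) has linear growth, Grönwall's inequality gives
\[
|\vartheta_t(x_0)-x_0|\le C(1+|x_0|)\,t\le C' t \quad\text{for } t\le T.
\]
The inequality \(|x-x_0|^2\le 2|x-\vartheta_t(x_0)|^2+2C'^2t^2\) then implies
\[
e^{-c|x-\vartheta_t(x_0)|^2/t}\le e^{cC'^2T}\,e^{-(c/2)|x-x_0|^2/t},
\]
which converts the upper bounds into the stated form with \(\mathfrak m_p=c/2\) and \(\mathfrak m_\nabla=c/2\). For the lower bound I would instead use \(|x-\vartheta_t(x_0)|\le |x|+|x_0|+C'T\), which is bounded uniformly for \(x\in\mathcal C\). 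Since \(t\ge a>0\) on \([a,b]\), this gives
\[
p_t(x_0,x)\ge C^{-1}b^{-d/2}\exp\!\bigl(-C R^2/a\bigr)>0 \quad\text{for } t\in[a,b],\ x\in\mathcal C,
\]
where \(R\) is a bound on \(|x-\vartheta_t(x_0)|\) over \(\mathcal C\times[a,b]\).

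\emph{Main obstacle.} The delicate point is matching hypotheses: making sure the cited theorem covers unbounded drifts of linear growth, gives constants uniform on the finite horizon \((0,T]\), and includes the gradient estimate. If the gradient estimate is not part of their Theorem~1.2, I would fall back on the parametrix construction. For bounded Hölder drift the classical parametrix gives the gradient bound directly. For linear-growth drift I would use a localisation or Girsanov-type comparison, or a parametrix built around the flow \(\vartheta_t\), which is the approach Menozzi et al.\ take. In that case I would cite the companion gradient result of that paper rather than reprove it.
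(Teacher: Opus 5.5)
Your proposal is correct and follows the same route as the paper. The paper gives no separate proof: it simply invokes Menozzi--Pesce--Zhang~\cite[Theorem~1.2]{menozzi2021density} under the extra hypothesis $\Sigma\in C_b^{1+\alpha_\Sigma}$, and your Gr\"onwall bound $|\vartheta_t(x_0)-x_0|\le C't$, which re-centres the flow-centred Gaussian bounds at $x_0$ and gives the compact-set lower bound, is the ``adaptation'' the paper leaves implicit.
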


Throughout, we set
\(
\mathcal F(t)
:=\bigcap_{u>t}\left\{\sigma\bigl(W^{(i)}(s),\theta^{(i)}(s):
0\le s\le u,\ 1\le i\le N\bigr)\vee\mathcal N\right\},
\, t\ge0,
\)
where \(\mathcal N\) denotes the \(\mathbb P\)-null sets. We now impose conditions on the censoring mechanism and fix a compact estimation set $I \subset \mathbb R^d$. 

\begin{assumption}
\label{ass:censoring-nondegeneracy}
The process \(\theta^{(i)}=(\theta^{(i)}(t))_{t\in[0,T]}\) is a continuous semimartingale with respect to \(\{\mathcal F(t)\}_{t\ge0}\) and satisfies the following conditions.
\begin{enumerate}[label=\textup{(C\arabic*)}, leftmargin=*]
\item The pairs
\(\{(W^{(i)},\theta^{(i)})\}_{i=1}^N\) are independent and identically
distributed, and \(W^{(i)}\) and \(\theta^{(i)}\) are independent for every
\(i\).
\item There exists a deterministic time \(t_\star\in[0,T]\) such that
\(
\mathbb P\bigl(I\subset\operatorname{Int}(D(\theta(t_\star)))\bigr)>0.
\)
\end{enumerate}
\end{assumption}

By condition~(C1), each \(W^{(i)}\) is a Brownian motion with respect to \(\{\mathcal F(t)\}_{t\ge0}\). Since \(X^{(i)}\) is a strong solution driven by \(W^{(i)}\), \(X^{(i)}\) and \(\theta^{(i)}\) are also independent. For $t \in[0, T]$ and $x \in \mathbb{R}^d$, define the visibility probability
\begin{equation}
\label{eq:uncensoring-probability}
\overline G_t(x)
:=
\mathbb P\left[x\in\operatorname{Int}(D(\theta(t)))\right]
=
\mathbb E\!\left[
\mathbf 1_{\{x\in\operatorname{Int}(D(\theta(t)))\}}
\right].
\end{equation}
By the independence of \(X\) and \(\theta\), for every bounded measurable
\(\varphi\),
\begin{equation}
\label{eq:uncensored-moment-identity}
\mathbb E[\delta(t)\varphi(Y(t))]
=
\mathbb E[\overline G_t(X(t))\varphi(X(t))]
=
\int_{\mathbb R^d}\varphi(x)\overline G_t(x)p_t(x_0,x)\,dx .
\end{equation}
Equivalently, the sub-probability measure
\(B\mapsto\mathbb P(Y(t)\in B,\delta(t)=1)\) has Lebesgue density
\(
p_t^{\mathrm{unc}}(x):=\overline G_t(x)p_t(x_0,x).
\)
Thus, for \(t_0\in(0,T)\),
\begin{equation}
\label{def:uncensoring-prob}
\overline f(x)
:=
\frac{1}{T-t_0}
\int_{t_0}^T
p_t^{\mathrm{unc}}(x)\,dt
=
\frac{1}{T-t_0}
\int_{t_0}^T
\overline G_t(x)p_t(x_0,x)\,dt
\end{equation}
is the time-averaged occupation density of the uncensored observations. The following result shows that if \(t_0\) is chosen small enough, then the
uncensored occupation density is uniformly positive on \(I\).

\begin{proposition}
\label{prop:positive-denominator}
Under Assumptions~\ref{ass:proj_decomp}, \ref{ass:standard_X}, and
\ref{ass:censoring-nondegeneracy}, there exists \(t_I\in(0,T]\) such that, for
every \(t_0\in(0,t_I)\),
\begin{equation}
\label{eq:mI-positive}
m_I:=\inf_{x\in I}\overline f(x)>0 .
\end{equation}
\end{proposition}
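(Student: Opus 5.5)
The plan is to show that the visibility probability $\overline G_t$ stays uniformly positive on $I$ for all $t$ in a time window near $t_\star$. We then combine this with the positive lower bound on $p_t(x_0,\cdot)$ over compacts from Proposition~\ref{ass:gaussian-density}. The first step is a continuity-in-time argument for the event $E_t:=\{I\subset\operatorname{Int}D(\theta(t))\}$. Under Assumption~\ref{ass:proj_decomp}, $\operatorname{Int}D(\theta)=\{y:My<c_0+\mathsf B_D\theta\}$, so by compactness of $I$,
\[
E_t=\Bigl\{\min_{1\le j\le m}\bigl(c_j(\theta(t))-\max_{y\in I}m_j^\top y\bigr)>0\Bigr\}.
\]
Define $\psi(\theta):=\min_j\bigl(c_j(\theta)-\max_{y\in I}m_j^\top y\bigr)$. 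This function is continuous, in fact Lipschitz, in $\theta$, and $E_t=\{\psi(\theta(t))>0\}$. Since $\theta$ has continuous paths, $t\mapsto\psi(\theta(t))$ is continuous pathwise.

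By (C2), $\mathbb P(\psi(\theta(t_\star))>0)>0$. I then choose $\varepsilon>0$ with $\mathbb P(\psi(\theta(t_\star))>2\varepsilon)=:2\eta>0$. Let $\omega_\theta(\rho):=\sup_{|s-t_\star|\le\rho}|\psi(\theta(s))-\psi(\theta(t_\star))|$. By path continuity, $\omega_\theta(\rho)\to0$ almost surely as $\rho\downarrow0$, so there is $\rho>0$ with $\mathbb P(\omega_\theta(\rho)\ge\varepsilon)\le\eta$. On the event $\{\psi(\theta(t_\star))>2\varepsilon\}\cap\{\omega_\theta(\rho)<\varepsilon\}$ we have $\psi(\theta(s))>\varepsilon>0$ for all $s\in J:=[t_\star-\rho,t_\star+\rho]\cap[0,T]$. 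Hence for every $x\in I$ and $s\in J$,
\[
\overline G_s(x)\ge\mathbb P(E_s)\ge\eta .
\]

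The main obstacle is the case $t_\star=0$. Because $p_t(x_0,\cdot)$ degenerates as $t\to0$, the lower bound of Proposition~\ref{ass:gaussian-density} is only available on intervals $[a,b]\subset(0,T]$. We also need $J\cap[t_0,T]$ to contain a nondegenerate interval bounded away from $0$. This is where $t_I$ comes in. Pick a nondegenerate interval $[a,b]\subset J\cap(0,T]$; this is possible since $J$ has positive length inside $[0,T]$, even if $t_\star\in\{0,T\}$. Then set $t_I:=a$. For $t_0<t_I$ we get $[a,b]\subset[t_0,T]$.

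Finally, with $\kappa:=\inf_{t\in[a,b]}\inf_{x\in I}p_t(x_0,x)>0$ from Proposition~\ref{ass:gaussian-density}, and using nonnegativity of the integrand, for every $x\in I$,
\[
\overline f(x)\ge\frac{1}{T-t_0}\int_a^b\overline G_t(x)p_t(x_0,x)\,dt\ge\frac{(b-a)\eta\kappa}{T}>0 .
\]
This gives $m_I>0$. Measurability of $t\mapsto\overline G_t(x)$, needed for the integral, follows from joint measurability of $(t,\omega)\mapsto\mathbf 1_{\{Mx<c(\theta(t))\}}$, since the set $\{Mx<c(\theta)\}$ is open and $\theta$ is continuous.
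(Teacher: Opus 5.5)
Your proof is correct, and its skeleton matches the paper's. Both find a time window around $t_\star$ on which $\inf_{x\in I}\overline G_t(x)$ is bounded below, and combine it with the lower bound of Proposition~\ref{ass:gaussian-density} on a compact time interval inside $(0,T]$. The routes differ in two places.

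\textbf{How the window is produced.} The paper uses the polyhedral structure only through the openness of $\mathcal U_D$. From this and compactness of $I$ it shows that $\Theta_I=\{a\in\Theta: I\subset\operatorname{Int}D(a)\}$ is open. It then picks a compact $C_I\subset\Theta_I$ of positive probability with an $\eta$-collar inside $\Theta_I$, and controls $\sup_{|t-t_\star|\le\varepsilon_I}|\theta(t)-\theta(t_\star)|$. Your Lipschitz margin $\psi(\theta)=\min_j\bigl(c_j(\theta)-\max_{y\in I}m_j^\top y\bigr)$ is a quantitative version of the same idea and makes the continuity step immediate. However, it relies on the explicit form of Assumption~\ref{ass:proj_decomp}, including \eqref{eq:Slater} for $\operatorname{Int}D(\theta)=\{y:My<c(\theta)\}$. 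The paper's argument applies verbatim to any mechanism with $\mathcal U_D$ open, such as the random ball of Example~\ref{ex:censoring_regime}(c) used in Section~\ref{sec:simulation}, where you would need a separate margin such as $r-\max_{y\in I}|y|$.

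\textbf{How $t_I$ is chosen.} You fix $[a,b]$ and set $t_I=a$, which gives a lower bound on $m_I$ that is uniform in $t_0\in(0,t_I)$. The paper instead takes $t_I=\min\{T,t_\star+\varepsilon_I\}$ and integrates over the $t_0$-dependent window $[\max\{t_0,t_\star-\varepsilon_I\},t_I]$. The statement is existential, so your choice suffices. The paper's larger $t_I$, though, satisfies $t_I>t_\star$ whenever $t_\star<T$, and Section~\ref{sec:simulation} uses exactly this to admit $t_0=t_\star=5$. With your $t_I=a$ that conclusion is unavailable unless you place $[a,b]$ to the right of $t_\star$, or let the window depend on $t_0$ as the paper does.
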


The proof is given in Appendix~\ref{Sec:pf-positive-denominator}. In the sequel, all estimators are constructed from the censored observations restricted to the time interval \([t_0,T]\). The condition \(t_0<t_I\) ensures that the interval of integration contains a nondegenerate subinterval on which \(\inf_{x\in I}\overline G_t(x)>0\). The restriction \(t_0>0\) avoids the singular behavior of the transition
density at the initial time. See also
Marie and Rosier~\cite[Remark~2]{marie2023nadaraya}.

\section{Multivariate kernel estimator for the drift function}
\label{sec:kernel-estimator}

We now construct a Nadaraya--Watson-type estimator for the drift function. The section is
organized as follows. We first introduce anisotropic kernels and the
smoothness classes used to state the rates. We then define the kernel
estimator for the drift and provide fixed-bandwidth risk bounds for its
denominator, numerator, and ratio. Finally, we describe an
adaptive selection rule for the two anisotropic bandwidths.

\subsection{Anisotropic kernels and smoothness classes}
\label{subsec:kernels-smoothness}

The multivariate structure of the problem naturally calls for anisotropic smoothing, as the target functions may exhibit different smoothness levels across coordinates. We therefore work
with vector bandwidths
\(\boldsymbol h=(h_1,\ldots,h_d)\in(0,1]^d\).

\begin{assumption}
\label{ass:kernel}
The kernel \(K:\mathbb R^d\to\mathbb R\) is of product form
\(K(x)=\prod_{j=1}^d k_j(x_j)\), \(x\in\mathbb R^d\),
where each one-dimensional kernel \(k_j:\mathbb R\to\mathbb R\)
satisfies:
\begin{enumerate}[label=(\roman*), leftmargin=*]
\item \(k_j\) is Borel measurable, \(k_j\in L^1(\mathbb R)\cap L^2(\mathbb R)\cap L^\infty(\mathbb R)\), and \(k_j\) is even;
\item \(\int_{\mathbb R}k_j(u)\,du=1\), and there exists an integer
\(\nu_j\ge1\) such that
\(\int_{\mathbb R}u^\ell k_j(u)\,du=0\) for \(\ell=1,\ldots,\nu_j\),
and \(\int_{\mathbb R}(1+|u|^{\nu_j+1})|k_j(u)|\,du<\infty\).
\end{enumerate}
\end{assumption}

We shall state the smoothness assumptions in an anisotropic Nikol'skii form.
For a function \(u:\mathbb R^{d_1}\to\mathbb R^{d_2}\), \(j=1,\ldots,d_1\),
and an integer \(r\ge0\), \(\partial_j^r u\) denotes the \(r\)-th weak
partial derivative of \(u\) in the \(j\)-th coordinate, with
\(\partial_j^0u=u\).

\begin{definition}
\label{def:nikolskii-class}
Let \(d_1,d_2\in\mathbb N\),
\(\boldsymbol\alpha=(\alpha_1,\ldots,\alpha_{d_1})
\in(0,\infty)^{d_1}\), and \(L>0\). For each \(j\), set
\(r_j:=\lceil\alpha_j\rceil-1\) and \(\vartheta_j:=\alpha_j-r_j\in(0,1]\).
We say that \(u\in\mathcal N_{d_1}^{d_2}(\boldsymbol\alpha,L)\) if
\(u\in L^2(\mathbb R^{d_1};\mathbb R^{d_2})\), the weak derivatives
\(\partial_j^\ell u\in L^2(\mathbb R^{d_1};\mathbb R^{d_2})\) exist for
\(\ell=1,\ldots,r_j\), and for every \(z\in\mathbb R\),
\[
\big\|
\partial_j^{r_j}u(\cdot+ze_j)-\partial_j^{r_j}u
\big\|_{2}
\le
L|z|^{\vartheta_j},
\qquad j=1,\ldots,d_1 .
\]
\end{definition}

The following standard approximation result translates Nikol'skii smoothness into the kernel bias bounds used below.

\begin{lemma}
\label{lem:nikolskii-kernel-bias}
Assume that \(K\) satisfies Assumption~\ref{ass:kernel}(ii). Let
\(u\in\mathcal N_d^m(\boldsymbol\alpha,L)\), and assume that
\(\nu_j\ge \lceil\alpha_j\rceil-1\) for every \(j=1,\ldots,d\). Then
there exists a finite constant \(\mathrm C\), depending only on
\(d\), \(K\), and \(\boldsymbol\alpha\), such that for all
\(\boldsymbol h\in(0,1]^d\),
\[
\|K_{\boldsymbol h}*u-u\|_{2}^2
\le
\mathrm C L^2\sum_{j=1}^d h_j^{2\alpha_j}.
\]
\end{lemma}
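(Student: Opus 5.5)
The plan is to reduce to one coordinate direction at a time by telescoping, and then use the Taylor expansion with integral remainder together with the vanishing moments of \(k_j\). Write \(K_{\boldsymbol h}*u-u\) as a sum of \(d\) differences,
\[
K_{\boldsymbol h}*u-u=\sum_{j=1}^d \big(\mathcal K^{(1)}_{h_1}\cdots\mathcal K^{(j)}_{h_j}u-\mathcal K^{(1)}_{h_1}\cdots\mathcal K^{(j-1)}_{h_{j-1}}u\big),
\]
where \(\mathcal K^{(j)}_{h_j}\) denotes convolution in the \(j\)-th coordinate only with \(h_j^{-1}k_j(\cdot/h_j)\). This uses the product form of \(K\) in Assumption~\ref{ass:kernel}. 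By Young's inequality, each operator \(\mathcal K^{(i)}_{h_i}\) has norm at most \(\|k_i\|_1\) on \(L^2(\mathbb R^d;\mathbb R^m)\). Hence the \(j\)-th term is bounded in \(L^2\) by \(\prod_{i<j}\|k_i\|_1\cdot\|\mathcal K^{(j)}_{h_j}u-u\|_2\). By \((a_1+\cdots+a_d)^2\le d\sum_j a_j^2\), it then suffices to show
\[
\|\mathcal K^{(j)}_{h_j}u-u\|_2\le C_j L h_j^{\alpha_j}
\]
for each \(j\).

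Now fix \(j\), and write \(r=r_j\) and \(\vartheta=\vartheta_j\). Since \(\int k_j=1\),
\[
\mathcal K^{(j)}_{h_j}u(x)-u(x)=\int k_j(v)\big(u(x-h_jve_j)-u(x)\big)\,dv .
\]
For \(r\ge1\), expand \(z\mapsto u(x+ze_j)\) by Taylor's formula with integral remainder of order \(r\), using the weak derivatives \(\partial_j^\ell u\in L^2\). This expansion holds for a.e.\ \(x\) on absolutely continuous representatives along lines, or equivalently it can be justified by approximating \(u\) by mollified functions and passing to the limit in \(L^2\). The polynomial terms \((-h_jv)^\ell\partial_j^\ell u(x)/\ell!\) for \(1\le\ell\le r\) integrate to zero against \(k_j(v)\,dv\), because \(\nu_j\ge r\). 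Subtracting \(\partial_j^r u(x)\) inside the remainder, which is legitimate since it again produces a term \(v^r\) killed by the moment condition, leaves
\[
\int k_j(v)\frac{(-h_jv)^r}{(r-1)!}\int_0^1(1-\tau)^{r-1}\big(\partial_j^ru(x-\tau h_jve_j)-\partial_j^ru(x)\big)\,d\tau\,dv .
\]
For \(r=0\) the difference itself is already of this form, with no Taylor step.

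Next, apply Minkowski's integral inequality to take the \(L^2(dx)\) norm inside the \(dv\,d\tau\) integrals, and then use the Nikol'skii condition of Definition~\ref{def:nikolskii-class}:
\[
\|\partial_j^ru(\cdot-\tau h_jve_j)-\partial_j^ru\|_2\le L(\tau h_j|v|)^{\vartheta}.
\]
This gives the bound
\[
C\,L\,h_j^{r+\vartheta}\int|k_j(v)|\,|v|^{r+\vartheta}\,dv .
\]
Since \(r+\vartheta=\alpha_j\le r+1\le\nu_j+1\), this integral is finite by \(\int(1+|u|^{\nu_j+1})|k_j(u)|\,du<\infty\). This yields \(h_j^{\alpha_j}\), and the constant depends only on \(k_j\) and \(\alpha_j\).

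The main technical point is justifying the Taylor formula and the Minkowski interchange for merely weakly differentiable \(L^2\) functions. I would handle this by first proving the estimate for \(u\in C_c^\infty\) and then extending by density. The density step needs care, because the class is not a priori closed under mollification with the same \(L\). I would instead mollify \(u\) itself, \(u_\epsilon=\rho_\epsilon*u\). This preserves the Nikol'skii bound with the same \(L\), since \(\|\rho_\epsilon*g\|_2\le\|g\|_2\) and translations commute with convolution. The approximants \(u_\epsilon\) are smooth with \(L^2\) derivatives, and both sides of the inequality are continuous as \(\epsilon\to0\). Combining the \(d\) directional bounds gives the stated inequality with
\[
\mathrm C=d\max_j\Big(\prod_{i<j}\|k_i\|_1^2\Big)C_j^2 .
\]
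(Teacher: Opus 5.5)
Your proposal is correct and is essentially the standard argument the paper relies on: the paper omits the proof and points to the usual anisotropic kernel-approximation argument of Kerkyacharian, Lepski and Picard, which likewise telescopes coordinate by coordinate via the product form of \(K\), kills the Taylor polynomial terms with the vanishing moments, and applies Minkowski's inequality together with the Nikol'skii modulus bound. Your mollification step, which keeps the same constant \(L\) and passes to the limit using the \(L^2\)-boundedness of convolution with \(K_{\boldsymbol h}\), is an appropriate way to justify the Taylor expansion for weakly differentiable \(u\), a point the paper leaves implicit.
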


The proof is standard and is omitted. It follows from the usual anisotropic
kernel approximation argument for Nikol'skii--Besov classes; see, for instance,
Kerkyacharian, Lepski and Picard~\cite[Proposition~3]{kerkyacharian2001nonlinear}. In the sequel, we assume that there exist \(\boldsymbol\beta,\boldsymbol\gamma\in(0,\infty)^d\) such that
\(
\overline f\in\mathcal N_d^1(\boldsymbol\beta,L_f),
\,
b\overline f\in\mathcal N_d^d(\boldsymbol\gamma,L_{bf}).
\)
\subsection{The Nadaraya--Watson estimator}
\label{subsec:nw-estimator}

We now define the two kernel estimators that enter the Nadaraya--Watson
ratio estimator. Let $I \subset \mathbb R^d$ be the compact estimation set introduced in Section~\ref{sec:model-preliminaries}. By Proposition~\ref{prop:positive-denominator},
\(m_I:=\inf_{x\in I}\overline f(x)>0\). For \(\boldsymbol h'\in(0,1]^d\), define
{\small
\begin{equation}
\label{eq:def_f_hat}
\widehat f_{N,\boldsymbol h'}(x)
:=
\frac{1}{N(T-t_0)}
\sum_{i=1}^N
\int_{t_0}^T
\delta^{(i)}(t)
K_{\boldsymbol h'}(Y^{(i)}(t)-x)\,dt.
\end{equation}
}
Using \eqref{eq:uncensored-moment-identity} and Fubini's theorem, we get
{\small
\begin{align}
\label{eq:expectation-fhat}
\mathbb E\!\left[\widehat f_{N,\boldsymbol h'}(x)\right]
&=
\frac{1}{T-t_0}
\int_{t_0}^T
\mathbb E\!\left[
\delta(t)K_{\boldsymbol h'}(Y(t)-x)
\right]\,dt \nonumber=
\frac{1}{T-t_0}
\int_{t_0}^T
\int_{\mathbb R^d}
K_{\boldsymbol h'}(y-x)\overline G_t(y)p_t(x_0,y)\,dy\,dt \nonumber\\
&=
\int_{\mathbb R^d}
K_{\boldsymbol h'}(y-x)\overline f(y)\,dy
=
(K_{\boldsymbol h'}*\overline f)(x).
\end{align}
}
For \(\boldsymbol h\in(0,1]^d\), define the vector-valued estimator
\(\widehat{bf}_{N,\boldsymbol h}
=(\widehat{bf}_{1,N,\boldsymbol h},\ldots,
\widehat{bf}_{d,N,\boldsymbol h})^\top\) whose $j$-th component is
{\small
\begin{align}
\widehat{bf}_{j,N,\boldsymbol h}(x)
&:=
\frac{1}{N(T-t_0)}
\sum_{i=1}^N
\int_{t_0}^T
\delta^{(i)}(t)
K_{\boldsymbol h}(Y^{(i)}(t)-x)\,dY_j^{(i)}(t) \nonumber\\
&=
\frac{1}{N(T-t_0)}
\sum_{i=1}^N
\int_{t_0}^T
\delta^{(i)}(t)
K_{\boldsymbol h}(X^{(i)}(t)-x)\,dX_j^{(i)}(t) \nonumber\\
&=
\frac{1}{N(T-t_0)}
\sum_{i=1}^N
\int_{t_0}^T
\delta^{(i)}(t)K_{\boldsymbol h}(X^{(i)}(t)-x)
\left[
 b_j(X^{(i)}(t))\,dt
+
\Sigma_{j\cdot}(X^{(i)}(t))\,dW^{(i)}(t)
\right].
\label{eq:def_bf_hat}
\end{align}
}
Here, the second equality in \eqref{eq:def_bf_hat} follows from applying Theorem~\ref{thm:semimartingale-identity} to each trajectory with
\(\Phi^{(i)}(t)=K_{\boldsymbol h}(Y^{(i)}(t)-x)\), together with the definition of 
\(\delta^{(i)}\). By condition~(C1), the Itô integrals in the last line have mean zero. Taking expectations and using \eqref{eq:uncensored-moment-identity} together with Fubini's theorem, we obtain, for each \(j=1,\ldots,d\),
\begin{align}
\label{eq:expectation-bfhat}
\mathbb E\!\left[\widehat{bf}_{j,N,\boldsymbol h}(x)\right]
&=
\frac{1}{T-t_0}
\int_{t_0}^T
\mathbb E\!\left[
\delta(t)K_{\boldsymbol h}(X(t)-x)b_j(X(t))
\right]\,dt \nonumber\\
&=
\int_{\mathbb R^d}
K_{\boldsymbol h}(y-x)b_j(y)\overline f(y)\,dy
=
\big(K_{\boldsymbol h}*(b_j\overline f)\big)(x).
\end{align}
These interchanges of expectation and integration are justified by Assumptions~\ref{ass:standard_X} and~\ref{ass:kernel} together with Proposition~\ref{ass:gaussian-density}. The final estimator for $b$ is
\begin{equation}
\label{eq:def_b_hat}
\widehat b_{N,\boldsymbol h,\boldsymbol h'}(x)
:=
\frac{\widehat{bf}_{N,\boldsymbol h}(x)}
{\widehat f_{N,\boldsymbol h'}(x)}
\mathbf 1_{\{\widehat f_{N,\boldsymbol h'}(x)>m_I/2\}},
\qquad x\in I .
\end{equation}
Since \(m_I\) is unknown, the theoretical constant \(m_I/2\) is
replaced in practice by a data-dependent threshold; see
Appendix~\ref{app:simulation-implementation-tables} for implementation
details.
Having defined the estimator, we turn to its statistical performance.
The next proposition collects the three fixed-bandwidth bounds needed
throughout the paper.
Set
\begin{align}
\label{eq:variance-constants}
\mathrm C_f&:=\|K\|_{2}^2,\qquad
\mathrm C_{bf}
:=
2\|K\|_{2}^2
\left(
\|b\|_{2,\overline f}^2
+
\frac{1}{T-t_0}
\big\|\sqrt{\operatorname{Tr}(\Sigma\Sigma^\top)}\big\|_{2,\overline f}^2
\right).
\end{align}

\begin{proposition}
\label{prop:fixed-bandwidth-risk}
Under Assumptions~\ref{ass:proj_decomp}, \ref{ass:standard_X}, \ref{ass:censoring-nondegeneracy}, and \ref{ass:kernel}(i), the following bounds hold for every
\(\boldsymbol h,\boldsymbol h'\in(0,1]^d\),
\begin{align}
&\mathbb E\!\left[
\big\|
\widehat f_{N,\boldsymbol h'}-\overline f
\big\|_{2}^2
\right]
\le
\big\|
K_{\boldsymbol h'}*\overline f-\overline f
\big\|_{2}^2
+
\frac{\mathrm C_f}{N \prod_{j=1}^d h'_j},
\label{eq:risk-den-aniso}
\\
&\mathbb E\!\left[
\big\|
\widehat{bf}_{N,\boldsymbol h}-b\overline f
\big\|_{2}^2
\right]
\le
\big\|
K_{\boldsymbol h}*(b\overline f)-b\overline f
\big\|_{2}^2
+
\frac{\mathrm C_{bf}}{N \prod_{j=1}^d h_j},
\label{eq:risk-num-aniso}
\\
&\mathbb E\!\left[
\big\|
\widehat b_{N,\boldsymbol h,\boldsymbol h'}-b
\big\|_{2,I}^2
\right]
\le
\frac{\mathrm C_{\mathrm{NW}}}{m_I^2}
\left[
\mathbb E\!\left[
\big\|
\widehat{bf}_{N,\boldsymbol h}-b\overline f
\big\|_{2}^2
\right]
+
\mathbb E\!\left[
\big\|
\widehat f_{N,\boldsymbol h'}-\overline f
\big\|_{2}^2
\right]
\right].
\label{eq:risk-ratio-aniso}
\end{align}
Here \(\mathrm C_f\) and \(\mathrm C_{bf}\) are defined in
\eqref{eq:variance-constants}, and
\(\mathrm{C}_{\mathrm{NW}}:=12\left(1 \vee\|b\|_{\infty,I}^2\right)\).
\end{proposition}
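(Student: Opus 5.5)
The plan is the standard bias--variance decomposition for the two linear estimators, followed by a deterministic pointwise comparison for the ratio.

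\emph{Denominator.} By \eqref{eq:expectation-fhat}, $\mathbb E[\widehat f_{N,\boldsymbol h'}(x)]=(K_{\boldsymbol h'}*\overline f)(x)$. Hence, by Fubini,
\[
\mathbb E\|\widehat f_{N,\boldsymbol h'}-\overline f\|_2^2=\|K_{\boldsymbol h'}*\overline f-\overline f\|_2^2+\int\operatorname{Var}(\widehat f_{N,\boldsymbol h'}(x))\,dx .
\]
The $N$ summands are i.i.d. by (C1), so the variance is at most $N^{-1}$ times the second moment of one summand. Jensen in $t$ with respect to $dt/(T-t_0)$ gives
\[
\Bigl(\frac{1}{T-t_0}\int_{t_0}^T\delta K_{\boldsymbol h'}(Y-x)\,dt\Bigr)^2\le \frac{1}{T-t_0}\int_{t_0}^T\delta K_{\boldsymbol h'}^2(Y-x)\,dt .
\]
Integrating in $x$ gives $\|K_{\boldsymbol h'}\|_2^2=\|K\|_2^2/\prod_j h'_j$, because $\delta\le1$. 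This yields \eqref{eq:risk-den-aniso}.

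\emph{Numerator.} Use the representation in \eqref{eq:def_bf_hat} and the mean \eqref{eq:expectation-bfhat}. The variance is again at most $N^{-1}$ times the second moment of one summand. Split each summand into a drift part and a martingale part using $(a+b)^2\le 2a^2+2b^2$. For the drift part, Jensen in time gives
\[
\frac{1}{T-t_0}\int_{t_0}^T\mathbb E\bigl[\delta K_{\boldsymbol h}^2(X-x)|b(X)|^2\bigr]\,dt ,
\]
summed over components. After integrating in $x$ and using \eqref{eq:uncensored-moment-identity} and \eqref{def:uncensoring-prob}, this equals $\|K\|_2^2\|b\|_{2,\overline f}^2/\prod_j h_j$. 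For the martingale part, the Itô isometry (valid since $W$ is an $\mathcal F$-Brownian motion by (C1)) gives
\[
(T-t_0)^{-2}\int_{t_0}^T\mathbb E\bigl[\delta K_{\boldsymbol h}^2(X-x)\,\operatorname{Tr}(\Sigma\Sigma^\top)(X)\bigr]\,dt .
\]
Summing over $j$ produces the trace, and integrating in $x$ gives $\|K\|_2^2(T-t_0)^{-1}\|\sqrt{\operatorname{Tr}\Sigma\Sigma^\top}\|_{2,\overline f}^2/\prod_j h_j$. Together these give $\mathrm C_{bf}$. The finiteness and integrability that justify Fubini come from Proposition~\ref{ass:gaussian-density} and the boundedness of $\Sigma$. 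For $\|b\|_{2,\overline f}$, linear growth of $b$ together with the Gaussian bound suffices.

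\emph{Ratio.} Fix $x\in I$ and write $\widehat f=\widehat f_{N,\boldsymbol h'}(x)$ and $\widehat{bf}=\widehat{bf}_{N,\boldsymbol h}(x)$.
\begin{itemize}
\item On $\{\widehat f>m_I/2\}$ we have
\[
\widehat b-b=\frac{(\widehat{bf}-b\overline f)-b(\widehat f-\overline f)}{\widehat f},
\]
so $|\widehat b-b|^2\le (8/m_I^2)\bigl(|\widehat{bf}-b\overline f|^2+|b|^2|\widehat f-\overline f|^2\bigr)$.
\item On $\{\widehat f\le m_I/2\}$ we have $\widehat b=0$. Since $\overline f(x)\ge m_I$, this event forces $|\widehat f-\overline f|\ge m_I/2$. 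Hence $|b(x)|^2\le |b(x)|^2\,4|\widehat f-\overline f|^2/m_I^2$.
\end{itemize}
Combining the two cases gives the pointwise bound
\[
|\widehat b-b|^2\le 12(1\vee\|b\|_{\infty,I}^2)\,m_I^{-2}\bigl(|\widehat{bf}-b\overline f|^2+|\widehat f-\overline f|^2\bigr).
\]
Integrating over $I$, enlarging to $\mathbb R^d$, and taking expectations gives \eqref{eq:risk-ratio-aniso}. Here $\|b\|_{\infty,I}<\infty$ by continuity of $b$ on the compact set $I$.

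The main obstacle is the numerator step. It relies on Theorem~\ref{thm:semimartingale-identity} to replace $dY$ by $dX$, and on the measurability and integrability conditions needed to apply the Itô isometry and Fubini jointly in $(x,t,\omega)$. The remaining steps are routine.
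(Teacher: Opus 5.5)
Your proposal is correct and follows essentially the same route as the paper: a bias--variance decomposition using the i.i.d. summands, Jensen/Cauchy--Schwarz in time with $\delta\le 1$ and $\|K_{\boldsymbol h}\|_2^2=\|K\|_2^2/\prod_j h_j$ for the denominator, and a drift/martingale split with the It\^o isometry and the identity defining $\overline f$ for the numerator. The ratio step is also the paper's, namely the two-case pointwise bound on $\{\widehat f>m_I/2\}$ and its complement, where $|\widehat f-\overline f|\ge m_I/2$; the differences are cosmetic (you compute variances pointwise in $x$ and integrate by Fubini rather than working with $L^2$-valued summands, and your intermediate constant $8$ is slightly sharper than the paper's $12$).
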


The proof relies on standard bias--variance decompositions and is given in Appendix~\ref{pf:prop:fixed-bandwidth-risk}. Combining Proposition~\ref{prop:fixed-bandwidth-risk} with
Lemma~\ref{lem:nikolskii-kernel-bias} yields explicit rates under the
Nikol'skii assumptions. For
\(\boldsymbol\alpha=(\alpha_1,\ldots,\alpha_d)\in(0,\infty)^d\), define
the harmonic mean
{
\[
\overline\alpha
:=
\left(\frac1d\sum_{j=1}^d\frac1{\alpha_j}\right)^{-1}.
\]
}
\begin{corollary}
\label{cor:fixed-bandwidth-rate}
Under the conditions of Proposition~\ref{prop:fixed-bandwidth-risk}, assume moreover that the kernel satisfies Assumption~\ref{ass:kernel}(ii).
Suppose that
\(\overline f\in\mathcal N_d^1(\boldsymbol\beta,L_f)\) and
\(b\overline f\in\mathcal N_d^d(\boldsymbol\gamma,L_{bf})\). Assume further that the kernel orders $\boldsymbol \nu =(\nu_1,..., \nu_d)$ satisfy \(\nu_j\ge (\lceil\gamma_j\rceil-1) \vee (\lceil\beta_j\rceil-1) \) for every \(j=1,\ldots,d\). 
Then, for a finite constant \(\mathrm C_{\mathcal N}\) depending only on \(d\), \(K\),
\(\boldsymbol\beta\), and \(\boldsymbol\gamma\),
\[
\mathbb E\!\left[
\big\|
\widehat b_{N,\boldsymbol h,\boldsymbol h'}-b
\big\|_{2,I}^2
\right]
\le
\frac{\mathrm C_{\mathrm{NW}}}{m_I^2}
\left[
\mathrm C_{\mathcal N}L_{bf}^2\sum_{j=1}^d h_j^{2\gamma_j}
+
\frac{\mathrm C_{bf}}{N \prod_{j=1}^d h_j}
+
\mathrm C_{\mathcal N}L_f^2\sum_{j=1}^d (h_j')^{2\beta_j}
+
\frac{\mathrm C_f}{N \prod_{j=1}^d h'_j}
\right].
\]
Here \(\mathrm C_f\) and
\(\mathrm C_{bf}\) are defined in \eqref{eq:variance-constants}. Thus, if we choose
\(
h_j^\star\asymp
N^{-\overline\gamma/(\gamma_j(2\overline\gamma+d))},
(h_j')^\star\asymp
N^{-\overline\beta/(\beta_j(2\overline\beta+d))},
\)
the risk of \(\widehat b_{N,\boldsymbol h^\star,(\boldsymbol h')^\star}\)
satisfies
\[
\mathbb E\!\left[
\big\|
\widehat b_{N,\boldsymbol h^\star,(\boldsymbol h')^\star}-b
\big\|_{2,I}^2
\right]
=
O\!\left(
N^{-2\overline\gamma/(2\overline\gamma+d)}
+
N^{-2\overline\beta/(2\overline\beta+d)}
\right).
\]
\end{corollary}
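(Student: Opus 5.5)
The argument chains Proposition~\ref{prop:fixed-bandwidth-risk} with Lemma~\ref{lem:nikolskii-kernel-bias} and then balances bias against variance. First, I substitute \eqref{eq:risk-num-aniso} and \eqref{eq:risk-den-aniso} into \eqref{eq:risk-ratio-aniso}. This gives the bound on $\mathbb E\|\widehat b_{N,\boldsymbol h,\boldsymbol h'}-b\|_{2,I}^2$ in terms of the two squared biases $\|K_{\boldsymbol h}*(b\overline f)-b\overline f\|_2^2$ and $\|K_{\boldsymbol h'}*\overline f-\overline f\|_2^2$, plus the variance terms $\mathrm C_{bf}/(N\prod_j h_j)$ and $\mathrm C_f/(N\prod_j h_j')$.

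Next, I apply Lemma~\ref{lem:nikolskii-kernel-bias} twice. For the numerator I take $u=b\overline f\in\mathcal N_d^d(\boldsymbol\gamma,L_{bf})$, which is admissible since $\nu_j\ge\lceil\gamma_j\rceil-1$. For the denominator I take $u=\overline f\in\mathcal N_d^1(\boldsymbol\beta,L_f)$, admissible since $\nu_j\ge\lceil\beta_j\rceil-1$. The two constants from the lemma depend only on $(d,K,\boldsymbol\gamma)$ and $(d,K,\boldsymbol\beta)$ respectively, so I set $\mathrm C_{\mathcal N}$ to be their maximum. This yields the displayed nonasymptotic inequality.

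For the rate, the numerator and denominator contributions separate, so I handle a generic pair $(\boldsymbol\alpha,\mathrm C)$ standing for either $(\boldsymbol\gamma,\mathrm C_{bf})$ or $(\boldsymbol\beta,\mathrm C_f)$. Taking $h_j=N^{-\overline\alpha/(\alpha_j(2\overline\alpha+d))}$, each bias term becomes $h_j^{2\alpha_j}=N^{-2\overline\alpha/(2\overline\alpha+d)}$. For the variance term, the identity $\sum_j \overline\alpha/\alpha_j=d$ from the definition of the harmonic mean gives
\[
\prod_j h_j=N^{-\frac{\overline\alpha}{2\overline\alpha+d}\sum_j\alpha_j^{-1}}=N^{-d/(2\overline\alpha+d)},
\]
and hence $1/(N\prod_j h_j)=N^{-2\overline\alpha/(2\overline\alpha+d)}$. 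Every term is therefore of the stated order.

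Two points need checking. The chosen bandwidths lie in $(0,1]$ for $N\ge1$, since the exponents are negative. The $\asymp$ in the statement only changes constants, and these are absorbed because $m_I$, $\mathrm C_{\mathrm{NW}}$, $\mathrm C_f$ and $\mathrm C_{bf}$ do not depend on $N$. The only step needing care is this exponent bookkeeping; otherwise the corollary is a direct combination of results already established.
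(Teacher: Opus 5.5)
Your proposal is correct and follows the paper's argument. You insert the fixed-bandwidth bounds of Proposition~\ref{prop:fixed-bandwidth-risk} into the ratio bound, control both bias terms with Lemma~\ref{lem:nikolskii-kernel-bias} (taking $\mathrm C_{\mathcal N}$ as the larger of the two constants), and balance bias against variance using $\sum_j \overline\alpha/\alpha_j = d$; your exponent bookkeeping and the check that the chosen bandwidths lie in $(0,1]^d$ supply details the paper leaves implicit.
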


The proof of Corollary~\ref{cor:fixed-bandwidth-rate} is obtained by
applying Lemma~\ref{lem:nikolskii-kernel-bias} to the two bias terms in
Proposition~\ref{prop:fixed-bandwidth-risk}, and then balancing
\(\sum_j h_j^{2\gamma_j}\) with \((N \prod_{j=1}^d h_j)^{-1}\), and
\(\sum_j (h_j')^{2\beta_j}\) with \((N \prod_{j=1}^d h'_j)^{-1}\).

\subsection{Adaptive bandwidth selection}
\label{subsec:gl-selection}

We now choose the two bandwidths in a data-driven way. Let
\(\mathcal H_N\subset(0,1]^d\) be a finite bandwidth grid.
The following assumption controls the grid in the deviation bounds.

\begin{assumption}
\label{ass:GL-grid}
There exists a constant
\(\mathrm C_{\mathcal H}>0\) such that, for every \(N\ge2\),
\[
\frac{1}{N\prod_{j=1}^d h_j} \leq 1
\quad\text{for all }\boldsymbol h\in\mathcal H_N,
\qquad
|\mathcal H_N|\le
\mathrm C_{\mathcal H}N(\log N)^d.
\]
Moreover, for every \(\lambda>0\), there exists
\(\mathrm C_{\mathcal H}(\lambda)<\infty\), independent of \(N\), such that
{\small
\[
\sum_{\boldsymbol h\in\mathcal H_N}
\exp\!\left\{
-\lambda\left(\prod_{j=1}^d h_j\right)^{-1}
\right\}
\le
\mathrm C_{\mathcal H}(\lambda).
\]
}
\end{assumption}

This assumption is a complexity condition on the bandwidth family, preventing
the grid from being too rich at any fixed effective scale. 

\begin{example}
\label{ex:admissible-grids}
Assumption~\ref{ass:GL-grid} is satisfied by the dyadic and the
reciprocal grids,
\[
\begin{aligned}
\mathcal H_N^{\rm dyad}
&:=
\Bigl\{
(2^{-\ell_1},\ldots,2^{-\ell_d}):
\ell\in\mathbb N_0^d,\
\textstyle\sum_{j=1}^d\ell_j\le\lfloor\log_2 N\rfloor
\Bigr\},\\
\mathcal H_N^{\rm rec}
&:=
\Bigl\{
(1/k_1,\ldots,1/k_d):
k\in\mathbb N^d,\
\prod_{j=1}^{d} k_j\le N
\Bigr\}.
\end{aligned}
\]
In both cases, \(\prod_{j=1}^d h_j\ge N^{-1}\) by construction, and
Lemma~\ref{lem:product-index-counting} gives
\[
|\mathcal H_N^{\rm dyad}|
=
\sum_{s=0}^{\lfloor\log_2 N\rfloor}\binom{s+d-1}{d-1}
\le C_d(\log N)^d,
\qquad
|\mathcal H_N^{\rm rec}|
\le C_dN(\log N)^{d-1}.
\]
Finally, grouping the bandwidths according to the value of
\(\bigl(\prod_{j=1}^d h_j\bigr)^{-1}\), namely \(2^s\),
\(s\in\mathbb N_0\), for the dyadic grid and \(m\in\mathbb N\) for the
reciprocal grid, and using
Lemma~\ref{lem:product-index-counting} again, we obtain, for every
\(\lambda>0\),
\[
\sum_{\boldsymbol h\in\mathcal H_N^{\rm dyad}}
e^{-\lambda(\prod_{j=1}^d h_j)^{-1}}
\le
\sum_{s=0}^{\infty}
\binom{s+d-1}{d-1}e^{-\lambda 2^s}
<\infty,
\]
\[
\sum_{\boldsymbol h\in\mathcal H_N^{\rm rec}}
e^{-\lambda(\prod_{j=1}^d h_j)^{-1}}
\le
\sum_{m=1}^{\infty}
\#\{k\in\mathbb N^d:\prod_{j=1}^{d} k_j=m\}\,
e^{-\lambda m}
\le
\sum_{m=1}^{\infty}
m^{d-1}e^{-\lambda m}
<\infty.
\]
\end{example}

For \(\boldsymbol h,\boldsymbol\eta\in\mathcal H_N\), define the
smoothed numerator and denominator estimators by
\[
\widehat{bf}_{N,\boldsymbol h,\boldsymbol\eta}
:=
K_{\boldsymbol\eta}*\widehat{bf}_{N,\boldsymbol h},
\qquad
\widehat f_{N,\boldsymbol h,\boldsymbol\eta}
:=
K_{\boldsymbol\eta}*\widehat f_{N,\boldsymbol h}.
\]
The next lemma, proved in Appendix~\ref{subsec:pf:GL-commute}, provides a commutation property,  which is crucial for the arguments used in this section.

\begin{lemma}
\label{lem:GL-commute}
Assume that the kernel satisfies Assumption~\ref{ass:kernel}(i). Then, for all
\(\boldsymbol h,\boldsymbol\eta\in(0,1]^d\),
\(
\widehat{bf}_{N,\boldsymbol h,\boldsymbol\eta}
=
\widehat{bf}_{N,\boldsymbol\eta,\boldsymbol h}
\) and  \(
\widehat f_{N,\boldsymbol h,\boldsymbol\eta}
=
\widehat f_{N,\boldsymbol\eta,\boldsymbol h}
\).
Moreover, for every \(x\in\mathbb R^d\),
{\small
\begin{align*}
&\widehat{bf}_{N,\boldsymbol h,\boldsymbol\eta}(x)
=
\frac{1}{N(T-t_0)}
\sum_{i=1}^N
\int_{t_0}^T
\delta^{(i)}(t)
(K_{\boldsymbol h}*K_{\boldsymbol\eta})(X^{(i)}(t)-x)\,
dX^{(i)}(t), \\& \begingroup  \widehat f_{N,\boldsymbol h,\boldsymbol\eta}(x)
=
\frac{1}{N(T-t_0)}
\sum_{i=1}^N
\int_{t_0}^T
\delta^{(i)}(t)
(K_{\boldsymbol h}*K_{\boldsymbol\eta})(X^{(i)}(t)-x)\,dt. \endgroup
\end{align*}
}
\end{lemma}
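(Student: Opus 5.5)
The plan is to compute the convolution directly and then exchange the spatial integral with the time or stochastic integral. The denominator case comes first, since it only needs ordinary Fubini.

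\emph{Denominator.} By definition,
\[
\widehat f_{N,\boldsymbol h,\boldsymbol\eta}(x)=\int_{\mathbb R^d}K_{\boldsymbol\eta}(x-y)\,\widehat f_{N,\boldsymbol h}(y)\,dy .
\]
I substitute \eqref{eq:def_f_hat} and use Fubini, which is justified because $\int|K_{\boldsymbol\eta}(x-y)|\,|K_{\boldsymbol h}(Y^{(i)}(t)-y)|\,dy\le\|K_{\boldsymbol\eta}\|_\infty\|K_{\boldsymbol h}\|_1$ for all $t$. This gives an integrand
\[
\delta^{(i)}(t)\int K_{\boldsymbol\eta}(x-y)K_{\boldsymbol h}(Y^{(i)}(t)-y)\,dy .
\]
The kernels are even under Assumption~\ref{ass:kernel}(i), so $K_{\boldsymbol h}(Y-y)=K_{\boldsymbol h}(y-Y)$. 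The change of variables $u=y-x$ then gives
\[
\int K_{\boldsymbol\eta}(u)K_{\boldsymbol h}(Y-x-u)\,du=(K_{\boldsymbol h}*K_{\boldsymbol\eta})(Y-x).
\]
On $\{\delta^{(i)}(t)=1\}$ we have $Y^{(i)}(t)=X^{(i)}(t)$, because the projection is the identity on $D(\theta)$. So $Y$ may be replaced by $X$, which yields the displayed formula. The expression is symmetric in $(\boldsymbol h,\boldsymbol\eta)$ because convolution is commutative, and this gives $\widehat f_{N,\boldsymbol h,\boldsymbol\eta}=\widehat f_{N,\boldsymbol\eta,\boldsymbol h}$.

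\emph{Numerator.} I first use the representation \eqref{eq:def_bf_hat}, obtained from Theorem~\ref{thm:semimartingale-identity}, to write the integrator as $dX_j=b_j(X)\,dt+\Sigma_{j\cdot}(X)\,dW$. The $dt$ part is handled exactly as in the denominator case. For the $dW$ part I need a stochastic Fubini theorem, for instance Revuz--Yor Ch.~IV, Exercise~5.17, or Protter's Theorem~IV.65. This lets me write
\[
\int K_{\boldsymbol\eta}(x-y)\int_{t_0}^T\delta(t)K_{\boldsymbol h}(X(t)-y)\,\Sigma_{j\cdot}(X(t))\,dW(t)\,dy
=\int_{t_0}^T\delta(t)\Big(\int K_{\boldsymbol\eta}(x-y)K_{\boldsymbol h}(X(t)-y)\,dy\Big)\Sigma_{j\cdot}(X(t))\,dW(t).
\]
The sufficient integrability condition I will check is
\[
\int|K_{\boldsymbol\eta}(x-y)|\Big(\mathbb E\int_{t_0}^T K_{\boldsymbol h}(X(t)-y)^2\,|\Sigma|^2\,dt\Big)^{1/2}dy<\infty .
\]
It holds because $\Sigma$ is bounded, $K_{\boldsymbol h}\in L^\infty$, and $K_{\boldsymbol\eta}\in L^1$. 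The integrand is also jointly measurable in $(y,t,\omega)$ and predictable in $t$.

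\emph{Identification of the stochastic integral.} I then identify the resulting $dW$ integral as a single random variable, not just up to a $y$-dependent null set. To do this I take a version of the parametrized stochastic integral that is measurable in $y$, which the stochastic Fubini theorem supplies. The inner $dy$-integral is again $(K_{\boldsymbol h}*K_{\boldsymbol\eta})(X(t)-x)$ by evenness. Recombining the two parts gives $\int\delta(K_{\boldsymbol h}*K_{\boldsymbol\eta})(X-x)\,dX$, which is the displayed formula, and symmetry in $(\boldsymbol h,\boldsymbol\eta)$ follows as before.

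\emph{Main obstacle.} I expect the stochastic Fubini step, and the measurability and version issues around it, to be the main obstacle. The elementary convolution algebra is routine by comparison.
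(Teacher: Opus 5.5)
Your proposal is correct and takes essentially the same route as the paper. You rewrite the numerator with \(dX_j\) via Theorem~\ref{thm:semimartingale-identity} and exchange the \(dy\)-integral against \(K_{\boldsymbol\eta}(x-y)\,dy\) with the time and stochastic integrals (using a \(y\)-measurable version), then conclude by evenness of the kernel, the substitution \(u=y-x\), and commutativity of convolution. The only difference is cosmetic: the paper applies Theorem~\ref{thm:stoch-fubini} directly to the semimartingale \(X_j\) with the bounded integrand \(\delta K_{\boldsymbol h}(X-y)\). You instead split \(dX_j=b_j(X)\,dt+\Sigma_{j\cdot}(X)\,dW\) and use a stochastic Fubini theorem only for the Brownian part. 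Since \(K_{\boldsymbol\eta}(x-y)\,dy\) is a signed measure, you should pass through its Jordan decomposition, as the paper notes.
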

 For \(\boldsymbol h\in\mathcal H_N\), define the selected bandwidth for the numerator estimator as
{\small
\begin{align} \label{eq:GL-hhat-bf}
    \widehat{\boldsymbol h}_N
\in
\arg\min_{\boldsymbol h\in\mathcal H_N}
\left\{
\sup_{\boldsymbol\eta\in\mathcal H_N}
\left(
\big\|
\widehat{bf}_{N,\boldsymbol h,\boldsymbol\eta}
-
\widehat{bf}_{N,\boldsymbol\eta}
\big\|_{2}^2
-
\frac{\kappa_1\mathrm C_{bf}}{N \prod_{j=1}^d \eta_j}
\right)_+ 
+
\frac{\kappa_2\mathrm C_{bf}}{N \prod_{j=1}^d h_j}
\right\},
\end{align}
}
where $\kappa_2\ge\kappa_1>0$. Similarly, the selected bandwidth for the denominator estimator is
{\small
\begin{align} \label{eq:GL-hhat-f}
    \widehat{\boldsymbol h}'_N
\in
\arg\min_{\boldsymbol h'\in\mathcal H_N}
\left\{
\sup_{\boldsymbol\eta\in\mathcal H_N}
\left(
\big\|
\widehat f_{N,\boldsymbol h',\boldsymbol\eta}
-
\widehat f_{N,\boldsymbol\eta}
\big\|_{2}^2
-
\frac{\kappa_3\mathrm C_f}{N \prod_{j=1}^d \eta_j}
\right)_+
+
\frac{\kappa_4\mathrm C_f}{N \prod_{j=1}^d h'_j}
\right\},
\end{align}
}
where $\kappa_4\ge\kappa_3>0,$ and \(u_+:=\max\{u,0\}.\)
The constants \(\mathrm C_f\) and \(\mathrm C_{bf}\) appearing in
\eqref{eq:GL-hhat-bf}--\eqref{eq:GL-hhat-f} are defined in
\eqref{eq:variance-constants}.
The constant \(\mathrm C_{bf}\) enters the criterion \eqref{eq:GL-hhat-bf}
only through the products \(\kappa_1\mathrm C_{bf}\) and
\(\kappa_2\mathrm C_{bf}\), so its exact value is immaterial. Any deterministic upper bound for $\mathrm C_{bf}$ may be used instead, at the cost of rescaling the calibration constants \(\kappa_1\) and \(\kappa_2\). In our
implementation, a rough preliminary estimate of \(\mathrm C_{bf}\) is used
only to fix the order of magnitude of these products, whose overall scale is
then calibrated numerically, as is customary for
adaptive procedures; see
Appendix~\ref{app:simulation-implementation-tables} for details.

The final selected ratio estimator is
\begin{equation}
\label{eq:GL-ratio-estimator}
\widehat b_N^{\mathrm{GL}}(x)
:=
\frac{
\widehat{bf}_{N,\widehat{\boldsymbol h}_N}(x)
}{
\widehat f_{N,\widehat{\boldsymbol h}'_N}(x)
}
\mathbf 1_{\{
\widehat f_{N,\widehat{\boldsymbol h}'_N}(x)>m_I/2
\}},
\qquad x\in I .
\end{equation}
Before stating the adaptive risk bounds, let us explain the role of the
contrast in \begingroup \eqref{eq:GL-hhat-bf}\endgroup. By \eqref{eq:expectation-bfhat} and
Lemma~\ref{lem:GL-commute},
\[
\mathbb E\!\left[
\widehat{bf}_{N,\boldsymbol h,\boldsymbol\eta}
\right]
-
\mathbb E\!\left[
\widehat{bf}_{N,\boldsymbol\eta}
\right]
=
K_{\boldsymbol\eta}*
\big(
K_{\boldsymbol h}*(b\overline f)-b\overline f
\big).
\]
Thus, the supremum term in
\eqref{eq:GL-hhat-bf} estimates a
\(K_{\boldsymbol\eta}\)-smoothed version of the bias of
\(\widehat{bf}_{N,\boldsymbol h}\), while the subtracted penalty compensates for the associated stochastic fluctuation, which is of order
\((N \prod_{j=1}^d \eta_j)^{-1}\). The term $\kappa_2\mathrm C_{bf}(N \prod_{j=1}^d h_j)^{-1}$ plays the usual role of the variance term. The contrast in \eqref{eq:GL-hhat-f} has the same
interpretation, with \(b\overline f\) replaced by \(\overline f\). Applying Proposition~\ref{prop:hilbert-uniform-deviation} to the families defined above yields the following deviation bounds.

\begin{lemma}
\label{lem:GL-deviation-verification}
Under Assumptions~\ref{ass:proj_decomp},
\ref{ass:standard_X}, \ref{ass:censoring-nondegeneracy},
\ref{ass:kernel}(i), and~\ref{ass:GL-grid}, let \(\mathrm C_f\) and
\(\mathrm C_{bf}\) be the constants defined in
\eqref{eq:variance-constants}. There exist finite constants \(\mathrm C\) and \(\rho_{\mathcal H}>0\), independent of \(N\) and uniform over the bandwidths in \(\mathcal H_N\), such that for every \(\kappa_1\ge72\left(1 \vee\|K\|_1^2\right)\),
\[
\mathbb E\!\left[
\sup_{\boldsymbol\eta\in\mathcal H_N}
\left(
\big\|\widehat{bf}_{N,\boldsymbol\eta}
-\mathbb E[\widehat{bf}_{N,\boldsymbol\eta}]\big\|_2^2
-\frac{\kappa_1\mathrm C_{bf}}{6N\prod_{j=1}^d\eta_j}
\right)_+\right]
\le \mathrm C\frac{(\log N)^{\rho_{\mathcal H}}}{N},
\]
and, uniformly in \(\boldsymbol h\in\mathcal H_N\),
\begin{align} \label{deviation_bf}
    \mathbb E\!\left[
\sup_{\boldsymbol\eta\in\mathcal H_N}
\left(
\big\|\widehat{bf}_{N,\boldsymbol h,\boldsymbol\eta}
-\mathbb E[\widehat{bf}_{N,\boldsymbol h,\boldsymbol\eta}]\big\|_2^2
-\frac{\kappa_1\mathrm C_{bf}}{6N\prod_{j=1}^d\eta_j}
\right)_+\right]
\le \mathrm C\frac{(\log N)^{\rho_{\mathcal H}}}{N}.
\end{align}
The same two bounds hold with
\(\widehat{bf}_{N,\boldsymbol\eta}\),
\(\widehat{bf}_{N,\boldsymbol h,\boldsymbol\eta}\), and
\(\mathrm C_{bf}\) replaced by
\(\widehat f_{N,\boldsymbol\eta}\),
\(\widehat f_{N,\boldsymbol h,\boldsymbol\eta}\), and
\(\mathrm C_f\), respectively, for every
\(\kappa_3\ge72\left(1 \vee\|K\|_1^2\right)\). One may take
\(\rho_{\mathcal H}=d+6\).
\end{lemma}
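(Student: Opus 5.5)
The plan is to apply Proposition~\ref{prop:hilbert-uniform-deviation}, the Hilbert-valued Talagrand/Klein--Rio deviation inequality with truncation, to the centered empirical means in $H=L^2(\mathbb R^d;\mathbb R^d)$ (or $L^2(\mathbb R^d)$ for the denominator). By \eqref{eq:def_bf_hat} and Lemma~\ref{lem:GL-commute}, we write $\widehat{bf}_{N,\boldsymbol\eta}-\mathbb E[\widehat{bf}_{N,\boldsymbol\eta}]=N^{-1}\sum_{i=1}^N(\xi^{(i)}_{\boldsymbol\eta}-\mathbb E\xi_{\boldsymbol\eta})$, with
\[
\xi^{(i)}_{\boldsymbol\eta}(\cdot):=\frac{1}{T-t_0}\int_{t_0}^T\delta^{(i)}(t)\,L_{\boldsymbol\eta}(X^{(i)}(t)-\cdot)\,dX^{(i)}(t).
\]
Here $L_{\boldsymbol\eta}=K_{\boldsymbol\eta}$ for the first bound and $L_{\boldsymbol\eta}=K_{\boldsymbol h}*K_{\boldsymbol\eta}$ for \eqref{deviation_bf}. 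By (C1) and the i.i.d.\ structure, the summands are i.i.d.\ $H$-valued variables. The family is indexed by $\boldsymbol\eta\in\mathcal H_N$, whose cardinality is at most $\mathrm C_{\mathcal H}N(\log N)^d$ by Assumption~\ref{ass:GL-grid}.

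The three inputs the proposition needs will be obtained as follows.

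First, the \emph{variance proxy}. Split $dX=b\,dt+\Sigma\,dW$. Use Cauchy--Schwarz in time for the drift part and the Itô isometry for the martingale part; $\delta\le 1$ throughout. Together with $\|L_{\boldsymbol\eta}\|_2^2\le \|K_{\boldsymbol h}\|_1^2\|K_{\boldsymbol\eta}\|_2^2\le\|K\|_1^2\|K\|_2^2/\prod_j\eta_j$ (Young), this gives
\[
\mathbb E\|\xi_{\boldsymbol\eta}\|_H^2\le 2\|K\|_1^2\|K\|_2^2\Big(\|b\|_{2,\overline f}^2+\tfrac{1}{T-t_0}\|\sqrt{\operatorname{Tr}\Sigma\Sigma^\top}\|_{2,\overline f}^2\Big)\Big/\textstyle\prod_j\eta_j,
\]
that is, $(1\vee\|K\|_1^2)\mathrm C_{bf}/\prod_j\eta_j$. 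This bound is what fixes the threshold $\kappa_1\ge 72(1\vee\|K\|_1^2)$ once the numerical constants of Proposition~\ref{prop:hilbert-uniform-deviation} (presumably a factor like $12$, combined with the $1/6$) are inserted.

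Second, the \emph{weak variance} $\sup_{\|g\|_H\le1}\mathbb E\langle\xi_{\boldsymbol\eta},g\rangle^2$. Here $\langle\xi_{\boldsymbol\eta},g\rangle$ is again a stochastic integral of $(L_{\boldsymbol\eta}*g)(X(t))$. The Itô isometry and the Gaussian upper bound of Proposition~\ref{ass:gaussian-density} give $\|p_t\|_\infty\lesssim t_0^{-d/2}$ on $[t_0,T]$. Hence this quantity is $O(1)$, uniformly in $\boldsymbol\eta$ and $\boldsymbol h$, using $\|L_{\boldsymbol\eta}*g\|_2\le\|K\|_1^2\|g\|_2$ and the linear growth of $b$ together with Gaussian moments of $X$.

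Third, the \emph{envelope}. Deterministically, the drift part satisfies $\|\cdot\|_H\lesssim \|K\|_2\sup_t|b(X(t))|/\sqrt{\prod_j\eta_j}$. The martingale part is unbounded, and this is exactly why Proposition~\ref{prop:hilbert-uniform-deviation} allows truncation: we will need moment or tail bounds on $\|\xi_{\boldsymbol\eta}\|_H$ at level $\tau_N\asymp\sqrt{N}(\log N)^{a}$. These follow from the Burkholder--Davis--Gundy inequality in the Hilbert space $H$ (with all polynomial moments finite) and sub-Gaussian tails of $\sup_t|X(t)|$, given the bounded $\Sigma$ and Lipschitz $b$.

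With these inputs, the proposition yields, for each $\boldsymbol\eta$, an exponential term of the form $\exp(-\lambda(\prod_j\eta_j)^{-1})/N$ plus a truncation remainder. The exponential terms are summed using the third condition of Assumption~\ref{ass:GL-grid}. The remainder terms are summed using $|\mathcal H_N|\le\mathrm C_{\mathcal H}N(\log N)^d$ together with the condition $(N\prod_j\eta_j)^{-1}\le1$, which converts envelope factors $(\prod_j\eta_j)^{-1/2}\le\sqrt N$ into powers of $N$ absorbed by the truncation level. Tracking logarithmic factors should give $\rho_{\mathcal H}=d+6$.

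The main obstacle is the truncation step. We must choose $\tau_N$ so that the bias term $\mathbb E[\|\xi\|_H\mathbf 1_{\|\xi\|_H>\tau_N}]$, squared and multiplied by $|\mathcal H_N|$, is $O((\log N)^{\rho}/N)$ uniformly in $\boldsymbol h$. I would first try a high-moment Markov bound, $\mathbb E\|\xi\|^{2}\mathbf 1_{\|\xi\|>\tau}\le \mathbb E\|\xi\|^{2+2k}/\tau^{2k}$, with BDG giving $\mathbb E\|\xi\|^{q}\lesssim_q(\prod_j\eta_j)^{-q/2}\le N^{q/2}$. That bound is too weak, so I would instead use the sub-Gaussian tail of the martingale norm from Pinelis's Hilbert-space martingale inequality, whose quadratic variation is bounded by $\Lambda_\Sigma T\|L_{\boldsymbol\eta}\|_2^2$. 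This gives exponential decay at $\tau\asymp\sqrt{N\log N}$ and yields only logarithmic losses. The denominator case is the same argument, but simpler: there is no stochastic integral, so the envelope is deterministic and equals $\|K\|_1\|K\|_2(\prod_j\eta_j)^{-1/2}$, and no truncation is needed.
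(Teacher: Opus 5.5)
Your architecture is the same as the paper's: i.i.d.\ $L^2$-valued summands, $s_{\boldsymbol\eta}^2=\mathrm C_{bf}/\prod_j\eta_j$ (times $\|K\|_1^2$ for $K_{\boldsymbol h}*K_{\boldsymbol\eta}$), an $O(1)$ weak variance, Proposition~\ref{prop:hilbert-uniform-deviation}, and summation via Assumption~\ref{ass:GL-grid}. The gap is in your treatment of the truncation. You reject the polynomial-moment route because you truncate at an $\boldsymbol\eta$-independent level $\tau_N$ and weaken the BDG bound $\mathbb E\|\xi_{\boldsymbol\eta}\|_2^q\le\mathrm C_q(\prod_j\eta_j)^{-q/2}$ to $\mathrm C_qN^{q/2}$. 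Read relative to $s_{\boldsymbol\eta}$, that bound is exactly the hypothesis $\mathbb E\|\xi_{\boldsymbol\eta}\|_2^{r_0}\le\mathrm C_{r_0}s_{\boldsymbol\eta}^{r_0}$ of Proposition~\ref{prop:hilbert-uniform-deviation}. That proposition truncates at the bandwidth-dependent cutoff $M_{N,\boldsymbol\eta}=\tau\sqrt N\,s_{\boldsymbol\eta}/\log N$. With this cutoff, the bounded-difference Klein--Rio term equals $\frac{M_{N,\boldsymbol\eta}^2}{N^2}N^{-c/\tau}$, which is polynomially small and proportional to $s_{\boldsymbol\eta}^2$. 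Markov's inequality then gives
\[
\mathbb E\bigl[\|\xi_{\boldsymbol\eta}\|_2^2\mathbf 1_{\{\|\xi_{\boldsymbol\eta}\|_2>M_{N,\boldsymbol\eta}\}}\bigr]
\le M_{N,\boldsymbol\eta}^{-(r_0-2)}\,\mathbb E\|\xi_{\boldsymbol\eta}\|_2^{r_0}
\le \mathrm C(\log N)^{r_0-2}N^{-(r_0-2)/2}s_{\boldsymbol\eta}^2 .
\]
Take $r_0=8$, and note that $\sum_{\boldsymbol\eta\in\mathcal H_N}s_{\boldsymbol\eta}^2\le\mathrm CN^2(\log N)^d$ by $|\mathcal H_N|\le\mathrm C_{\mathcal H}N(\log N)^d$ and $(N\prod_j\eta_j)^{-1}\le1$. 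This yields $\mathrm C(\log N)^{d+6}/N$. So the moment route is not too weak, and this is where $\rho_{\mathcal H}=d+6$ comes from, not from log-tracking in a sub-Gaussian argument.

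As stated, your replacement does not close the argument. Pinelis' inequality at the uniform level $\tau_N\asymp\sqrt{N\log N}$ does control the truncation remainder. But you never handle the bounded-difference term $\frac{M^2}{N^2}\exp(-c\,NH_{\boldsymbol\eta}/M)$ of the Klein--Rio bound, where $H_{\boldsymbol\eta}=s_{\boldsymbol\eta}/\sqrt N$. With $M=\tau_N$, the ratio $NH_{\boldsymbol\eta}/M$ is of order $s_{\boldsymbol\eta}/\sqrt{\log N}$. Hence, for every bandwidth with $(\prod_j\eta_j)^{-1}\le\log N$, the bound only gives a term of order $\log N/N$ with no decaying factor. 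Assumption~\ref{ass:GL-grid} bounds the number of such bandwidths only by $\mathrm C_{\mathcal H}(\lambda)N^{\lambda}$ for each $\lambda>0$, not by a power of $\log N$. So the sum over $\mathcal H_N$ is not $O((\log N)^{\rho}/N)$ in general, although it happens to be for the two grids of Example~\ref{ex:admissible-grids}. Pinelis' bound would work if you also let the level depend on the bandwidth, e.g.\ $A\,s_{\boldsymbol\eta}\sqrt{\log N}$, but then it is unnecessary. Your denominator argument is fine for precisely this reason: there the deterministic envelope scales like $s_{\boldsymbol\eta}$. A smaller point concerns the weak variance of the drift part. To bound it by $\mathrm C\|L_{\boldsymbol\eta}*g\|_2^2$ you need $\sup_{t\in[t_0,T]}\sup_x|b(x)|^2p_t(x_0,x)<\infty$, which follows from linear growth against the Gaussian upper bound on the density; moments of $X$ alone do not give it.
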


The proof is given in Appendix~\ref{app:GL-deviation-proof}. It relies on a
general deviation inequality for families of Hilbert-space-valued empirical
means, stated as Proposition~\ref{prop:hilbert-uniform-deviation} in
Appendix~\ref{app:hilbert-valued-concentration}. For a fixed bandwidth $\eta$, the key object in the proof is the $L^2$-valued random function
\[
Z_{\boldsymbol\eta}^{(i)}(x)
:=
\frac{1}{T-t_0}
\int_{t_0}^T
\delta^{(i)}(t)\,K_{\boldsymbol\eta}(Y^{(i)}(t)-x)\,dY^{(i)}(t),
\qquad\text{so that}\qquad
\widehat{bf}_{N,\boldsymbol\eta}(x)
=
\frac{1}{N}\sum_{i=1}^N Z_{\boldsymbol\eta}^{(i)}(x).
\]
Due to the stochastic-integral component, the norm $\|Z_{\boldsymbol\eta}\|_2$
admits no deterministic bound that is uniform over sample paths. We
therefore truncate on the event
\(
\big\{\|Z_{\boldsymbol\eta}\|_2\le M_{N,\boldsymbol\eta}\big\}
\)
for a suitable threshold $M_{N,\boldsymbol\eta}$; this single truncation
simultaneously controls every scalar projection
$\langle Z_{\boldsymbol\eta},\psi\rangle_2$ with $\|\psi\|_2=1$, and
constitutes the main new ingredient in the proof of
Proposition~\ref{prop:hilbert-uniform-deviation}. The proposition itself is
formulated for general families of unbounded Hilbert-space-valued statistics
and may be of independent interest. The proof of
Lemma~\ref{lem:GL-deviation-verification} consists of verifying its conditions for the stochastic integrals
$Z_{\boldsymbol\eta}^{(i)}$, which is where the diffusion structure enters the argument.

Using Lemmas~\ref{lem:GL-commute} and~\ref{lem:GL-deviation-verification}, we establish the following adaptive risk bound.

\begin{theorem}
\label{thm:GL-oracle}
Under Assumptions~\ref{ass:proj_decomp},
\ref{ass:standard_X}, \ref{ass:censoring-nondegeneracy},
\ref{ass:kernel}(i), and~\ref{ass:GL-grid}, assume also that
\(\kappa_1,\kappa_3\ge72\left(1 \vee\|K\|_1^2\right)\), and that
\(\kappa_2\ge\kappa_1\), \(\kappa_4\ge\kappa_3\). Let \(\mathrm C_f\) and
\(\mathrm C_{bf}\) be the constants defined in
\eqref{eq:variance-constants}. Then there exist finite constants
\(\mathrm C_1,\mathrm C_2,\mathrm C_3,\mathrm C_4,\mathrm C_5\) and
\(\rho_{\mathcal H}>0\), independent of \(N\) and uniform over the bandwidths in
\(\mathcal H_N\), such that for all \(N\ge2\),
{\small
\begin{align}
&\mathbb E\!\left[
\big\|
\widehat{bf}_{N,\widehat{\boldsymbol h}_N}
-
b\overline f
\big\|_{2}^2
\right]
\le
\mathrm C_1
\inf_{\boldsymbol h\in\mathcal H_N}
\left\{
\big\|
K_{\boldsymbol h}*(b\overline f)-b\overline f
\big\|_{2}^2
+
\frac{\kappa_2\mathrm C_{bf}}{N \prod_{j=1}^d h_j}
\right\}
+
\mathrm C_2\frac{(\log N)^{\rho_{\mathcal H}}}{N},
\label{eq:GL-oracle-bf}
\\
&\mathbb E\!\left[
\big\|
\widehat f_{N,\widehat{\boldsymbol h}'_N}
-
\overline f
\big\|_{2}^2
\right]
\le
\mathrm C_3
\inf_{\boldsymbol h'\in\mathcal H_N}
\left\{
\big\|
K_{\boldsymbol h'}*\overline f-\overline f
\big\|_{2}^2
+
\frac{\kappa_4\mathrm C_f}{N \prod_{j=1}^d h'_j}
\right\}
+
\mathrm C_4\frac{(\log N)^{\rho_{\mathcal H}}}{N}.
\label{eq:GL-oracle-f}
\\&\mathbb E\!\left[
\big\|
\widehat b_N^{\mathrm{GL}}-b
\big\|_{2,I}^2
\right]
\le
\begingroup \mathrm C_5\endgroup
\frac{\mathrm C_{\mathrm{NW}}}{m_I^2}
\Bigg[
\inf_{\boldsymbol h\in\mathcal H_N}
\left\{
\big\|
K_{\boldsymbol h}*(b\overline f)-b\overline f
\big\|_{2}^2
+
\frac{\mathrm C_{bf}}{N \prod_{j=1}^d h_j}
\right\}
\nonumber\\
& \qquad \qquad \qquad\qquad \qquad+
\inf_{\boldsymbol h'\in\mathcal H_N}
\left\{
\big\|
K_{\boldsymbol h'}*\overline f-\overline f
\big\|_{2}^2
+
\frac{\mathrm C_f}{N \prod_{j=1}^d h'_j}
\right\}
\Bigg]
+
\mathrm C_5\frac{(\log N)^{\rho_{\mathcal H}}}{N}.
\label{eq:GL-oracle-ratio}
\end{align}
}
Moreover, suppose that
\(\mathcal H_N\) is one of the two grids in
Example~\ref{ex:admissible-grids}, and that the kernel satisfies Assumption~\ref{ass:kernel}(ii) and 
\(b\overline f\in\mathcal N_d^d(\boldsymbol\gamma,L_{bf})\),
\(\overline f\in\mathcal N_d^1(\boldsymbol\beta,L_f)\), and that $\boldsymbol \nu =(\nu_1,..., \nu_d)$ in Assumption \ref{ass:kernel}(ii) is such that \(\nu_j\ge (\lceil\gamma_j\rceil-1) \vee (\lceil\beta_j\rceil-1) \) for every \(j=1,\ldots,d\). Then
\begin{equation}
\label{eq:GL-rate-final}
\mathbb E\!\left[
\big\|
\widehat b_N^{\mathrm{GL}}-b
\big\|_{2,I}^2
\right]
=
O\!\left(
N^{-2\overline\gamma/(2\overline\gamma+d)}
+
N^{-2\overline\beta/(2\overline\beta+d)}
+
\frac{(\log N)^{\rho_{\mathcal H}}}{N}
\right).
\end{equation}
\end{theorem}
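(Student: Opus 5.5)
We follow the standard Goldenshluger--Lepski argument, adapted to two separately selected bandwidths as in Comte and Marie. We first prove \eqref{eq:GL-oracle-bf}; \eqref{eq:GL-oracle-f} is identical with $b\overline f$, $\mathrm C_{bf}$, $\kappa_1,\kappa_2$ replaced by $\overline f$, $\mathrm C_f$, $\kappa_3,\kappa_4$. Write
\[
A(\boldsymbol h):=\sup_{\boldsymbol\eta\in\mathcal H_N}\Big(\|\widehat{bf}_{N,\boldsymbol h,\boldsymbol\eta}-\widehat{bf}_{N,\boldsymbol\eta}\|_2^2-\tfrac{\kappa_1\mathrm C_{bf}}{N\prod_j\eta_j}\Big)_+,
\qquad V(\boldsymbol h):=\tfrac{\kappa_2\mathrm C_{bf}}{N\prod_jh_j}.
\]
Fix an arbitrary $\boldsymbol h\in\mathcal H_N$. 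We split
\[
\widehat{bf}_{N,\widehat{\boldsymbol h}}-b\overline f
=\big(\widehat{bf}_{N,\widehat{\boldsymbol h}}-\widehat{bf}_{N,\widehat{\boldsymbol h},\boldsymbol h}\big)
+\big(\widehat{bf}_{N,\boldsymbol h,\widehat{\boldsymbol h}}-\widehat{bf}_{N,\boldsymbol h}\big)
+\big(\widehat{bf}_{N,\boldsymbol h}-b\overline f\big),
\]
where the middle step uses the commutation $\widehat{bf}_{N,\widehat{\boldsymbol h},\boldsymbol h}=\widehat{bf}_{N,\boldsymbol h,\widehat{\boldsymbol h}}$ of Lemma~\ref{lem:GL-commute}. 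By definition of $A$, the squared norm of the first bracket is at most $A(\boldsymbol h)+\kappa_1\mathrm C_{bf}/(N\prod_j\widehat h_j)\le A(\boldsymbol h)+V(\widehat{\boldsymbol h})$, using $\kappa_2\ge\kappa_1$. Likewise, the squared norm of the second bracket is at most $A(\widehat{\boldsymbol h})+V(\boldsymbol h)$. By the minimizing property \eqref{eq:GL-hhat-bf}, $A(\widehat{\boldsymbol h})+V(\widehat{\boldsymbol h})\le A(\boldsymbol h)+V(\boldsymbol h)$. Hence
\[
\|\widehat{bf}_{N,\widehat{\boldsymbol h}}-b\overline f\|_2^2\le 3\big(2A(\boldsymbol h)+2V(\boldsymbol h)\big)+3\|\widehat{bf}_{N,\boldsymbol h}-b\overline f\|_2^2 .
\]
The last term is handled by \eqref{eq:risk-num-aniso} in Proposition~\ref{prop:fixed-bandwidth-risk}.

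The main step is bounding $\mathbb E[A(\boldsymbol h)]$. We decompose
\[
\widehat{bf}_{N,\boldsymbol h,\boldsymbol\eta}-\widehat{bf}_{N,\boldsymbol\eta}
=(\widehat{bf}_{N,\boldsymbol h,\boldsymbol\eta}-\mathbb E\widehat{bf}_{N,\boldsymbol h,\boldsymbol\eta})
-(\widehat{bf}_{N,\boldsymbol\eta}-\mathbb E\widehat{bf}_{N,\boldsymbol\eta})
+K_{\boldsymbol\eta}*(K_{\boldsymbol h}*(b\overline f)-b\overline f),
\]
using the bias identity displayed before Lemma~\ref{lem:GL-deviation-verification}. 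Young's inequality bounds the squared norm of the last term by $\|K\|_1^2\|K_{\boldsymbol h}*(b\overline f)-b\overline f\|_2^2$. Applying $(a+b+c)^2\le3(a^2+b^2+c^2)$ and splitting the penalty $\kappa_1\mathrm C_{bf}/(N\prod\eta_j)$ into two halves, each compared against $3\times$ a centered term, we get
\[
A(\boldsymbol h)\le 3\sup_{\boldsymbol\eta}\big(\|\cdot\|^2-\tfrac{\kappa_1\mathrm C_{bf}}{6N\prod\eta_j}\big)_+ \;+\; 3\sup_{\boldsymbol\eta}(\cdots)_+ \;+\;3\|K\|_1^2\,\text{bias}(\boldsymbol h),
\]
where the two suprema run over the two centered families. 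This is exactly the factor $1/6$ in Lemma~\ref{lem:GL-deviation-verification}. Taking expectations and applying both bounds of that lemma (the second, \eqref{deviation_bf}, uniformly in $\boldsymbol h$) gives $\mathbb E[A(\boldsymbol h)]\le 3\|K\|_1^2\,\text{bias}(\boldsymbol h)+\mathrm C(\log N)^{\rho_{\mathcal H}}/N$. Combining with the previous step and taking the infimum over $\boldsymbol h$ yields \eqref{eq:GL-oracle-bf}. The variance term in \eqref{eq:risk-num-aniso} is absorbed into $V(\boldsymbol h)$ up to constants. The delicate point is bookkeeping the constants: each centered deviation must be compared with exactly $\kappa_1\mathrm C_{bf}/(6N\prod\eta_j)$, which is what the threshold $\kappa_1\ge72(1\vee\|K\|_1^2)$ in the lemma is calibrated for.

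For \eqref{eq:GL-oracle-ratio}, we repeat the pointwise ratio argument underlying \eqref{eq:risk-ratio-aniso}. That argument is deterministic in the bandwidths: it splits on the events $\{\widehat f(x)>m_I/2\}$ and its complement, and uses $\overline f\ge m_I$ on $I$ (Proposition~\ref{prop:positive-denominator}) and $|b|\le\|b\|_{\infty,I}$. Consequently it applies verbatim with the random bandwidths $\widehat{\boldsymbol h},\widehat{\boldsymbol h}'$, giving
\[
\mathbb E\|\widehat b^{\mathrm{GL}}_N-b\|_{2,I}^2\le \frac{\mathrm C_{\rm NW}}{m_I^2}\Big(\mathbb E\|\widehat{bf}_{N,\widehat{\boldsymbol h}}-b\overline f\|_2^2+\mathbb E\|\widehat f_{N,\widehat{\boldsymbol h}'}-\overline f\|_2^2\Big).
\]
We then insert \eqref{eq:GL-oracle-bf} and \eqref{eq:GL-oracle-f}, absorbing $\kappa_2,\kappa_4$ into $\mathrm C_5$.

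Finally, for \eqref{eq:GL-rate-final} we bound each infimum by its value at a grid point close to the oracle bandwidth of Corollary~\ref{cor:fixed-bandwidth-rate}, using Lemma~\ref{lem:nikolskii-kernel-bias} for the bias. For the dyadic grid, take $h_j=2^{-\ell_j}$ with $\ell_j=\lfloor \log_2 (1/h_j^\star)\rfloor$. For the reciprocal grid, take $k_j=\lceil 1/h_j^\star\rceil$. In both cases $h_j\asymp h_j^\star$, and $\prod_j h_j\gtrsim N^{-d/(2\overline\gamma+d)}\ge N^{-1}$, so the point lies in the grid, possibly after shrinking by a constant factor. This yields the rates $N^{-2\overline\gamma/(2\overline\gamma+d)}$ and $N^{-2\overline\beta/(2\overline\beta+d)}$, plus the $(\log N)^{\rho_{\mathcal H}}/N$ remainder.
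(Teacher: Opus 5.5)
Your proposal is correct and follows the paper's proof essentially step for step. It uses the same commutation-based three-term split combined with the minimizing property of $\widehat{\boldsymbol h}_N$, giving $3\|\widehat{bf}_{N,\boldsymbol h}-b\overline f\|_2^2+6A(\boldsymbol h)+6V(\boldsymbol h)$. It then applies the same three-way decomposition of $A(\boldsymbol h)$, with the penalty split into $\kappa_1\mathrm C_{bf}/(6N\prod_j\eta_j)$ pieces fed into Lemma~\ref{lem:GL-deviation-verification}, the pathwise ratio bound \eqref{eq:risk-ratio-fixed} at the random bandwidths, and the grid-approximation argument of Remark~\ref{rem:GL-rate}(i).

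The only cosmetic difference is your reciprocal-grid choice $k_j=\lceil 1/h_j^\star\rceil$, which lies in $\mathcal H_N^{\rm rec}$ only for $N$ large. This is harmless for an $O(\cdot)$ statement, whereas the paper's $k_j=\lfloor N^{a_j}\rfloor\vee1$ works for every $N$.
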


The proof is provided in Appendix~\ref{subsec:pf:GL-oracle}.

\begin{remark}\label{rem:GL-rate}
\begin{enumerate}[label=(\roman*)]
\item
The rate in \eqref{eq:GL-rate-final} follows from the usual anisotropic
bias-variance balance; by Lemma~\ref{lem:nikolskii-kernel-bias}, the
numerator oracle bound reduces to balancing
\(\sum_j h_j^{2\gamma_j}\) with \((N\prod_{j=1}^d h_j)^{-1}\), whose
minimizer has coordinates
\(h_j^\star\asymp N^{-a_j}\), where
\(a_j:=\overline\gamma/(\gamma_j(2\overline\gamma+d))\),
\(j=1,\ldots,d\). Both grids in Example~\ref{ex:admissible-grids}
contain bandwidths of this order, because
\(\sum_{j=1}^d a_j
=\frac{\overline\gamma}{2\overline\gamma+d}\sum_{j=1}^d\gamma_j^{-1}
=\frac{d}{2\overline\gamma+d}<1\).
Indeed, for the dyadic grid, \(\ell_j=\lfloor a_j\log_2 N\rfloor\)
gives \(2^{-\ell_j}\asymp N^{-a_j}\) and
\(\sum_{j=1}^d\ell_j\le(\sum_{j=1}^d a_j)\log_2 N\le\log_2 N\),
so \((2^{-\ell_1},\ldots,2^{-\ell_d})\in\mathcal H_N^{\rm dyad}\);
for the reciprocal grid, \(k_j=\lfloor N^{a_j}\rfloor\vee1\) gives
\(1/k_j\asymp N^{-a_j}\) and
\(\prod_{j=1}^d k_j\le N^{\sum_j a_j}\le N\),
so \((1/k_1,\ldots,1/k_d)\in\mathcal H_N^{\rm rec}\).
The same argument applies to the denominator oracle bandwidths, with
\(\boldsymbol\gamma\) replaced by \(\boldsymbol\beta\).

\item
\begingroup  
It is worth emphasizing what censoring does and does not change in
\eqref{eq:GL-rate-final}. The rate is the same as in the uncensored
fixed-$T$, $N\to\infty$ regime; censoring enters the risk bound only
through the constants. Specifically, the target of the denominator
estimator is the uncensored occupation density
\(\overline f\), which is the
fully observed occupation density damped by the uncensoring
probability \(\overline G_t\le1\). Consequently, the lower bound
\(m_I=\inf_{x\in I}\overline f(x)\) can be substantially smaller than its
uncensored counterpart, for instance, when the estimation set $I$ is
visible only with small probability, so that \(\overline G_t\) is small on parts
of \(I\), and the bound \eqref{eq:GL-oracle-ratio} scales as
\(m_I^{-2}\). Heavy censoring, therefore, manifests itself as a
deterioration of the constants in the oracle inequality, not of the
rate.

\item
The lower bound \(\kappa_1,\kappa_3\ge72\,(1\vee\|K\|_1^2)\) required by the
theory is a sufficient condition and not a sharp one. It is common in adaptive estimation for theory to determine a penalty only up to a multiplicative constant, which must then be calibrated from the data. The associated
methodology goes back to Birgé and Massart~\cite{birge2007minimal} and Arlot and
Massart~\cite{arlot2009data}. For Goldenshluger--Lepski procedures specifically, we refer to
Lacour and Massart~\cite{lacour2016minimal}. Our own choice is
reported in Appendix~\ref{app:simulation-implementation-tables}.
\endgroup

\item It is worth comparing the selection rule \eqref{eq:GL-hhat-bf} with the bandwidth selection procedure for drift estimation developed by Della Maestra and Hoffmann~\cite{dellamaestra2022nonparametric}. First, they work in the fixed-$T$, $N\to\infty$ regime and select two bandwidths: one for time and a common one for all $d$ spatial coordinates. Unlike criterion \eqref{eq:GL-hhat-bf}, their criterion takes the supremum over bandwidths that precede $\boldsymbol{h}$ under a total ordering of the grid. To construct this ordering, they impose a relation between temporal and spatial smoothness. As they note, this imposes a restriction on the resulting anisotropic adaptation result. This restriction can be removed, as in Goldenshluger and Lepski~\cite{goldenshluger2011bandwidth}, but only at a significant technical cost. By contrast, criterion \eqref{eq:GL-hhat-bf} retains the unordered comparison scheme of \cite{goldenshluger2011bandwidth}: the supremum is taken over the entire grid. The commutation property established in Lemma~\ref{lem:GL-commute} makes this full-grid comparison tractable and allows the smoothness indices $\gamma_1,\ldots,\gamma_d$ to vary independently. Second, they consider pointwise risk, and the rate obtained in \cite[Theorem~15]{dellamaestra2022nonparametric} contains the logarithmic factor inherent in pointwise adaptation, in accordance with the classical Lepski--Low phenomenon~\cite{lepskii1991problem,low1992non}. By contrast, the integrated risk considered here yields the rate \eqref{eq:GL-rate-final}, in which the logarithmic factor appears only in the remainder term.

\end{enumerate}
\end{remark}

\section{Minimax lower bound}
\label{sec:minimax-rate}

We first establish an anisotropic minimax lower bound in the
full-observation experiment and then transfer it to the censored-observation setting. Throughout this section, the dispersion coefficient \(\Sigma\) and, when
censoring is considered, the censoring mechanism are fixed. The only unknown
parameter is the drift \(b\).

We first define the drift class over which the minimax risk is evaluated.
For \(\boldsymbol\alpha=(\alpha_1,\ldots,\alpha_d)\in[1,\infty)^d\) and
\(L_\alpha,L_0,B>0\), define
\begin{align*}
    \mathcal B_{\boldsymbol\alpha}(L_\alpha,L_0,B)&:=
\big\{
b:\mathbb R^d\to\mathbb R^d:
b\in\mathcal N_d^d(\boldsymbol\alpha,L_\alpha),\
|b(0)|\le B, 
|b(x)-b(y)|\le L_0|x-y|,\ \forall x,y\in\mathbb R^d
\big\}.
\end{align*}
The last two conditions imply
\(
|b(x)|\le B+L_0|x|,
\, x\in\mathbb R^d.
\)
Hence, every $b$ in this class satisfies the Lipschitz and linear-growth conditions in Assumption~\ref{ass:standard_X}.

\begin{theorem}
\label{thm:minimax-rate}
Under Assumption~\ref{ass:standard_X}, suppose moreover that the dispersion coefficient \(\Sigma\) is fixed and does
not depend on the drift \(b\). Fix \(\boldsymbol\alpha\in[1,\infty)^d\) and \(L_\alpha,L_0,B>0\). Suppose that there exist \(x^\star\in\mathbb R^d\) and
\(L_Q>0\) such that
\(
\prod_{j=1}^d[x_j^\star,x_j^\star+L_Q]\subset I .
\)
\begingroup 
Let
\(\mathcal X_N:=\{(X^{(i)}(t))_{t\in[0,T]}\}_{i=1}^N\)
denote the fully observed trajectories.
\endgroup
Then there exists a constant \(c>0\) such that, for all \(N\) large enough,
\[
\begingroup 
\inf_{\widetilde b_N=\widetilde b_N(\mathcal X_N)}
\endgroup
\sup_{b\in\mathcal B_{\boldsymbol\alpha}(L_\alpha,L_0,B)}
\mathbb E_b\!\left[
\|\widetilde b_N-b\|_{2,I}^2
\right]
\ge
cN^{-2\bar\alpha/(2\bar\alpha+d)}.
\]
\end{theorem}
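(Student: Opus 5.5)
The plan is to apply the standard Assouad (or Varshamov--Gilbert plus Fano) reduction over a hypercube of localized drift perturbations supported in the cube $Q:=\prod_{j=1}^d[x_j^\star,x_j^\star+L_Q]\subset I$. Fix a base drift $b_0$, say $b_0\equiv0$, which lies in $\mathcal B_{\boldsymbol\alpha}(L_\alpha/2,L_0/2,B)$. Choose anisotropic bandwidths $h_j:=N^{-\bar\alpha/(\alpha_j(2\bar\alpha+d))}$, so that $h_j^{\alpha_j}=N^{-\bar\alpha/(2\bar\alpha+d)}=:\varepsilon_N$ for every $j$ and $\prod_j h_j=N^{-d/(2\bar\alpha+d)}$. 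Take a smooth bump $\psi\in C_c^\infty((0,1)^d)$ and grid points $x_k$, $k\in\{1,\ldots,m_1\}\times\cdots\times\{1,\ldots,m_d\}$ with $m_j\asymp L_Q/h_j$, and set $\psi_k(x):=\varepsilon_N\psi((x-x_k)/\boldsymbol h)e_1$. For $\omega\in\{0,1\}^{m}$ with $m=\prod_j m_j\asymp(\prod_jh_j)^{-1}$, define $b_\omega:=b_0+c_0\sum_k\omega_k\psi_k$.

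\textbf{Class membership.} Since the supports are disjoint, $\partial_j^{r}$ of the sum scales like $\varepsilon_Nh_j^{-r}$. The Nikol'skii $L^2$ modulus of order $\alpha_j$ in direction $j$ is then bounded by $c_0\varepsilon_N h_j^{-\alpha_j}(m\prod_lh_l)^{1/2}\lesssim c_0$. Here I use the pointwise modulus of each bump and the fact that $m\prod_l h_l\asymp1$. Also $\|\nabla b_\omega\|_\infty\lesssim c_0\max_j\varepsilon_N/h_j\to0$, because $\alpha_j\ge1$ gives $\varepsilon_N/h_j=h_j^{\alpha_j-1}\le1$, with decay unless $\alpha_j=1$; in that boundary case I take $c_0$ small. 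Finally $|b_\omega(0)|\le B$ once $0\notin Q$ or $N$ is large. Taking $c_0$ small therefore puts all $b_\omega$ in the class.

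\textbf{Separation.} For $\omega,\omega'$ differing in one coordinate $k$, we have $\|b_\omega-b_{\omega'}\|_{2,I}^2=c_0^2\varepsilon_N^2\prod_jh_j\|\psi\|_2^2$. Assouad's lemma then gives a lower bound of order $m\,\varepsilon_N^2\prod_jh_j\cdot\min(1-\mathrm{TV})\asymp\varepsilon_N^2=N^{-2\bar\alpha/(2\bar\alpha+d)}$, provided neighbouring hypotheses have total variation bounded away from $1$.

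\textbf{Information bound; the main step.} Since $\Sigma$ is fixed and uniformly elliptic and $b$ is Lipschitz, Girsanov's theorem gives the Kullback--Leibler divergence between the laws of $N$ i.i.d. paths:
\[
\mathrm{KL}(\mathbb P_{b_\omega}^{\otimes N},\mathbb P_{b_{\omega'}}^{\otimes N})=\frac N2\,\mathbb E_{b_\omega}\int_0^T\bigl|(\Sigma\Sigma^\top)^{-1/2}(b_\omega-b_{\omega'})(X(t))\bigr|^2dt .
\]
The case $q>d$ needs a pseudo-inverse, but ellipticity makes the projection nondegenerate. This is at most
\[
\frac N{2\lambda_\Sigma}\int_0^T\!\int|\Delta b|^2p_t^{b_\omega}(x_0,x)\,dx\,dt .
\]
Applying the Gaussian upper bound of Proposition~\ref{ass:gaussian-density}, with constants uniform over the class, one needs $\int_0^Tp_t(x_0,x)\,dt\le C$ uniformly on the compact $Q$. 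This holds when $x_0\notin Q$. In general, and for $d\ge2$, the integral has a singularity at $x_0$; I would handle it by choosing the cube $Q'\subset Q$ at positive distance from $x_0$, which is possible after shrinking $L_Q$. Thus the KL is at most $CNc_0^2\varepsilon_N^2\prod_jh_j=Cc_0^2$, and Pinsker bounds the total variation away from $1$ for small $c_0$.

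The delicate points are the uniformity of the Menozzi--Pesce--Zhang constants over the drift class, which should depend only on $L_0$, $B$ and $\Sigma$, and the verification of the Nikol'skii norm for the sum of bumps. The former is what I expect to require the most care.
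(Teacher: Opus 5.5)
Your proposal is correct, but it uses a different reduction from the paper. The paper builds the same bump family: $h_j=\rho^{1/\alpha_j}$, amplitude $\ell\rho$, and $b_0=0$. It then picks $M\ge 2^{m_\rho/8}$ hypotheses by Varshamov--Gilbert and applies Tsybakov's Theorem~2.5, a Fano-type bound. That theorem only requires the average divergence to the centre $b_0=0$ to be at most $\kappa\log M$.

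Since $\log M\asymp m_\rho\asymp\rho^{-d/\bar\alpha}\asymp N\rho^2$, the crude deterministic Girsanov bound is enough. This bound is $K(\mathbb P_q^{X,N},\mathbb P_0^{X,N})\le \frac{NT}{2\lambda_\Sigma}\|b_q\|_\infty^2\le CN\ell^2\rho^2$, and it uses only the sup-norm of the hypotheses. As a result, the paper needs no transition-density estimates, no uniformity of heat-kernel constants, and no attention to where $x_0$ sits relative to $Q$.

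In your Assouad route, each single-coordinate flip must cost $O(1)$ divergence. The sup-norm bound would give order $N\varepsilon_N^2T=T N^{d/(2\bar\alpha+d)}\to\infty$. This is exactly why you are pushed to the localized bound $N\|b_\omega-b_{\omega'}\|_2^2\,\sup_{Q'}\int_0^T p_t^{b_\omega}(x_0,x)\,dt$. That bound in turn needs Gaussian upper bounds that are uniform over the hypotheses, and, for $d\ge2$, a subcube $Q'$ at positive distance from $x_0$.

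Both ingredients hold, with two remarks.
\begin{enumerate}
\item Uniformity is needed only over your finite family, not the whole class. Its members are bounded by $c_0\|\psi\|_\infty\varepsilon_N\le 1$ and are $L_0$-Lipschitz. Aronson- or parametrix-type bounds therefore hold with constants depending only on that bound, $T$, and $\Sigma$. Note, however, that Proposition~\ref{ass:gaussian-density} as stated concerns a single fixed drift and does not supply this uniformity, so it is an external input.
\item Merely shrinking $L_Q$ with the corner $x^\star$ fixed fails if $x_0=x^\star$; you must relocate the cube. For example, take one of the $4^d$ subcubes of side $L_Q/4$ whose closure avoids $x_0$. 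At most $2^d$ of these closures can contain $x_0$, so such a subcube exists.
\end{enumerate}

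What your route buys is a localized information count: the divergence is proportional to the time $X$ spends near each bump. That is the form one would need to make occupation-density constants appear in a lower bound, which the paper remarks its argument cannot do. The paper's route buys brevity and avoids density estimates entirely.
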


\begin{corollary}
\label{cor:minimax-rate-censored}
\begingroup 
Under the assumptions of Theorem~\ref{thm:minimax-rate}, assume moreover
Assumption~\ref{ass:censoring-nondegeneracy}(C1), and that the censoring mechanism is fixed
and does not depend on \(b\). Let
\(O_N:=\{(Y^{(i)}(t),\delta^{(i)}(t))_{t\in[t_0,T]}\}_{i=1}^N\).
Then there exists a constant \(c>0\) such that, for all \(N\) large enough,
\[
\inf_{\widetilde b_N=\widetilde b_N(O_N)}
\sup_{b\in\mathcal B_{\boldsymbol\alpha}(L_\alpha,L_0,B)}
\mathbb E_b\!\left[
\|\widetilde b_N-b\|_{2,I}^2
\right]
\ge
cN^{-2\bar\alpha/(2\bar\alpha+d)}.
\]
\endgroup
\end{corollary}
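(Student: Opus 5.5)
The corollary will follow from Theorem~\ref{thm:minimax-rate} by a reduction (data-processing) argument: the censored experiment is less informative than the full-observation experiment, so every censored-data estimator is a randomized full-data estimator, and the lower bound transfers.

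\emph{Step 1: $O_N$ is a randomized function of $\mathcal X_N$.} By Assumption~\ref{ass:censoring-nondegeneracy}(C1), the pairs $(W^{(i)},\theta^{(i)})$ are i.i.d.\ and $\theta^{(i)}$ is independent of $W^{(i)}$, hence of $X^{(i)}$. By hypothesis, the law of $(\theta^{(i)})_{i\le N}$ does not depend on $b$. For each $t$, the observation satisfies
\[
(Y^{(i)}(t),\delta^{(i)}(t))=\bigl(\Pi_{D(\theta^{(i)}(t))}X^{(i)}(t),\ \mathbf 1_{\{X^{(i)}(t)\in\operatorname{Int}D(\theta^{(i)}(t))\}}\bigr),
\]
which is a jointly measurable function of $(\theta^{(i)}(t),X^{(i)}(t))$. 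The projection is continuous in $(\theta,x)$ in the polyhedral case by Lemma~\ref{lem:mpqp-pwa}, and in general it is Borel. The indicator is Borel because $\mathcal U_D$ is Borel (indeed open). Therefore $O_N=\Psi(\mathcal X_N,\boldsymbol\theta_N)$ for a measurable map $\Psi$ on path space, with $\boldsymbol\theta_N=(\theta^{(i)})_{i\le N}$ independent of $\mathcal X_N$ and having a $b$-free law $Q$. Measurability of $\Psi$ in the path-space sense can be checked on finite-dimensional cylinders, and continuity of the paths handles the rest.

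\emph{Step 2: transfer of the risk.} Fix any censored-data estimator $\widetilde b_N(O_N)$. For each $x\in I$, define
\[
\check b_N(\mathcal X_N)(x):=\int \widetilde b_N\bigl(\Psi(\mathcal X_N,\vartheta)\bigr)(x)\,Q(d\vartheta),
\]
which is a nonrandomized estimator based on $\mathcal X_N$ alone. By convexity of $u\mapsto\|u-b\|_{2,I}^2$ and Jensen's inequality (after Fubini over $\mathcal X_N$ and $\vartheta$, using the independence from Step 1),
\[
\mathbb E_b\bigl\|\check b_N-b\bigr\|_{2,I}^2\le \mathbb E_b\bigl\|\widetilde b_N(O_N)-b\bigr\|_{2,I}^2 \qquad\text{for every } b.
\]
If the right-hand side is infinite there is nothing to prove. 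Otherwise $\check b_N$ is well defined in $L^2(I)$, with joint measurability of $(\omega,x)\mapsto\widetilde b_N(x)$ assumed as part of the definition of an estimator. Alternatively, one can avoid Jensen by allowing randomized estimators in Theorem~\ref{thm:minimax-rate}; its proof, via Assouad or Fano applied to Girsanov likelihood ratios, covers randomized estimators verbatim.

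\emph{Step 3: conclusion.} Taking the supremum over $\mathcal B_{\boldsymbol\alpha}(L_\alpha,L_0,B)$ and then the infimum over $\widetilde b_N$ shows that the censored minimax risk dominates the full-observation minimax risk. Theorem~\ref{thm:minimax-rate} then gives the bound $cN^{-2\bar\alpha/(2\bar\alpha+d)}$ with the same constant $c$. Restricting observations to $[t_0,T]$ only discards information, so it fits the same scheme.

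The main obstacle is Step 1: the censoring mechanism must be ancillary, with a $b$-independent law and independence from $X$. This is precisely where (C1) and the fixed-mechanism hypothesis enter, together with the measurability bookkeeping for $\Psi$. Everything else is the standard fact that garbling an experiment cannot decrease the minimax risk.
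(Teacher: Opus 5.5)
Your argument is correct, but it follows a different route from the paper. The paper does not average out the censoring. Instead, it enlarges the observation to $A_N=(X^{(1)},\ldots,X^{(N)},\theta^{(1)},\ldots,\theta^{(N)})$ and notes that every $O_N$-estimator is an $A_N$-estimator. It then re-runs the many-hypotheses argument of Theorem~\ref{thm:minimax-rate} on this augmented experiment. There, under (C1) and the $b$-free censoring law, the law of $A_N$ under $b_q$ is $\mathbb P_q^{X,N}\otimes\mathbb P^{\theta,N}$, so absolute continuity and the Kullback--Leibler bounds of Lemma~\ref{lem:kl-girsanov} carry over unchanged.

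You use the same independence differently: you integrate $\vartheta$ out of a censored-data estimator, a Rao--Blackwell/garbling-type averaging step. This lets you invoke Theorem~\ref{thm:minimax-rate} as a black box, without touching the hypotheses $b_0,\ldots,b_M$. The price is that you rely on convexity of the squared $L^2(I)$ loss and on some Bochner-integrability bookkeeping. The paper's tensorization argument needs neither, but it has to reopen the proof of the theorem rather than use only its statement. Your fallback via randomized estimators is essentially the paper's argument, with the auxiliary independent randomness playing the role of $\theta$.

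One small point in Step~2: $\check b_N$ must be defined without reference to $b$, so ``if the right-hand side is infinite there is nothing to prove'' does not by itself settle well-definedness. Set $\check b_N$ equal to the Bochner integral when $\int\|\widetilde b_N(\Psi(\mathcal X_N,\vartheta))\|_{2,I}^2\,Q(d\vartheta)<\infty$, and to $0$ otherwise. By Fubini, for each $b$ with finite risk the exceptional set is $\mathbb P_b$-null, so Jensen applies pointwise in $\mathcal X_N$. For $b$ with infinite risk the inequality is trivial.
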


\begingroup 
The proofs follow the many-hypotheses method of
Tsybakov~\cite[Chapter~2, Section~2.6]{tsybakov2009introduction} and are given
in Appendices~\ref{Sec:pf-minimax-rate} and~\ref{Sec:pf-minimax-rate-censored}.
\endgroup

\begin{remark}
\label{rem:minimax-optimality}
\begingroup 
\begin{enumerate}
    \item When \(d=1\), Theorem~\ref{thm:minimax-rate} yields the lower risk bound
\(
N^{-2\alpha_1/(2\alpha_1+1)}.
\)
The corresponding one-dimensional rate was previously established by
Denis, Dion-Blanc and Martinez~\cite{denis2021ridge}.
Theorem~\ref{thm:minimax-rate} provides its multidimensional anisotropic
counterpart, while Corollary~\ref{cor:minimax-rate-censored} shows that the same minimax lower-bound rate holds in the censored setting.

\item Fix \(\boldsymbol\gamma\in[1,\infty)^d\). For each
\(b\in\mathcal B_{\boldsymbol\gamma}(L_\gamma,L_0,B)\), let \(O_N^{(b)}\)
denote the censored observation \(O_N\) generated by the diffusion with drift
\(b\), and let \(\overline f_b\) denote the corresponding time-averaged
uncensored occupation density. We consider estimating \(b\) from
\(O_N^{(b)}\), uniformly over this drift class. To turn the bound of
Corollary~\ref{cor:fixed-bandwidth-rate} into a minimax upper bound, assume
additionally that its hypotheses hold uniformly over the class. There exist
\(\boldsymbol\beta\in(0,\infty)^d\) and constants
\(m_0,L_f,L_{bf}>0\), all independent of \(b\), such that, for every
\(b\in\mathcal B_{\boldsymbol\gamma}(L_\gamma,L_0,B)\),
\(\inf_{x\in I}\overline f_b(x)\ge m_0\),
\(\overline f_b\in\mathcal N_d^1(\boldsymbol\beta,L_f)\), and
\(b\overline f_b\in\mathcal N_d^d(\boldsymbol\gamma,L_{bf})\), and suppose
that the remaining constants in that corollary are uniformly bounded. If
\(\beta_j\ge\gamma_j\) for every \(j\), then
\(\overline\beta\ge\overline\gamma\), so the denominator term converges no
more slowly than the numerator term. Corollary~\ref{cor:fixed-bandwidth-rate}
therefore gives the uniform upper rate
\(N^{-2\overline\gamma/(2\overline\gamma+d)}\), whereas
Corollary~\ref{cor:minimax-rate-censored}, applied with
\(\boldsymbol\alpha=\boldsymbol\gamma\), gives a lower bound of the same
order. Under these additional uniform assumptions, the fixed-bandwidth upper
bound is thus minimax optimal. The same minimax-optimality conclusion holds in the full-observation setting of Theorem~\ref{thm:minimax-rate}.
\item As a special case, suppose that the preceding uniform conditions hold with \(\boldsymbol\beta=\boldsymbol\gamma=\mathbf 1\), where \(\mathbf 1=(1,\ldots,1)\). Taking \(\boldsymbol\alpha=\mathbf 1\) in Theorem~\ref{thm:minimax-rate} and Corollary~\ref{cor:minimax-rate-censored} yields a minimax lower bound of order \(N^{-2/(d+2)}\), while Corollary~\ref{cor:fixed-bandwidth-rate} gives an upper bound of the same order. Hence, under these uniform regularity conditions, the minimax rate is \(N^{-2/(d+2)}\). Such conditions can, in particular, be verified under additional regularity assumptions on the uncensoring probabilities \(\overline G_t\) associated with the process \(\theta(t)\). These assumptions allow one to establish uniformly over the drift class that \(\overline f_b\in\mathcal N_d^1(\mathbf 1,L_f)\) and \(b\overline f_b\in\mathcal N_d^d(\mathbf 1,L_{bf})\). Since their verification requires further assumptions on the censoring mechanism and is not needed for the general lower-bound result, the details are omitted.

\item The lower bound in Corollary~\ref{cor:minimax-rate-censored} does not depend on the censoring mechanism. It shows that the full-observation minimax lower bound continues to hold under censoring, but it does not quantify the additional loss caused by censoring. In particular, it does not recover the factor \(m_I^{-2}\) appearing in the upper bounds.
\end{enumerate}

\endgroup
\end{remark}

\section{Simulation study}
\label{sec:simulation}

\renewcommand{\floatpagefraction}{0.75}
\renewcommand{\textfraction}{0.08}
\renewcommand{\topfraction}{0.93}
\renewcommand{\bottomfraction}{0.85}

We illustrate the finite-sample behavior of the estimator
\(\widehat b_N^{\mathrm{GL}}\) defined in
\eqref{eq:GL-ratio-estimator}. The numerical study is conducted in
\(d=2\) under the projected observation scheme
\eqref{eq:projected-observation}. The process \(X(t)\) follows
\eqref{eq:SDE_X}, with a constant dispersion matrix
\(\Sigma(x)=0.5I_2\), \(x\in\mathbb R^2\), all estimators are computed on the domain \(I=[-2,2]^2\).

Besides the proposed estimator, we report two additional
estimators for numerical comparison. The first, \(\widehat b_N^{\mathrm{LS}}\) is a projection least-squares estimator. Its one-dimensional version follows Huang~\cite{huang2026censoredsde}, while in dimension two, we use a tensor-product extension based on Dussap~\cite{dussap2023nonparametric}. The second,
\(\widehat b_N^{\mathrm{PCO}}\), is based on the PCO method of Lacour, Massart and Rivoirard~\cite{lacour2017estimator}, as adapted to i.i.d. diffusion paths by Marie and Rosier~\cite{marie2023nadaraya}. They are included as an empirical benchmark. The oracle inequality established in Section~\ref{sec:kernel-estimator} applies only to the GL procedure. The exact discrete formulae, grids, LS estimator, PCO criterion, effective comparison region, and MISE definition are given in Appendix~\ref{app:simulation-implementation-tables}.

As is standard in the fixed-\(T\), \(N\to\infty\) literature on drift
estimation from i.i.d.\ diffusion paths (see, e.g.,
Comte
and Genon-Catalot \cite{comte2020nonparametric}), the
theoretical analysis is conducted in the continuous-record framework,
and the discrete formulae introduced in Appendix~\ref{app:simulation-implementation-tables} are used as numerical approximations to the continuous-record estimators. The oracle inequalities established in Section~\ref{sec:kernel-estimator} apply to the continuous-record estimators and do not account for discretization error.
We consider the following three drift fields.
\begin{itemize}[leftmargin=*,topsep=2pt,itemsep=2pt]
\item \textbf{Ornstein-Uhlenbeck (OU):}
\(
 b(x_1,x_2)=(-x_1,-x_2).
\)
\\
This is the standard linear benchmark.
\item \textbf{Rotational OU:}
\(
 b(x_1,x_2)=(-x_1-x_2,\;x_1-x_2).
\)
\\
The antisymmetric component tests whether the procedure recovers both the
geometry and the orientation of a two-dimensional vector field, in the
spirit of multivariate and nonreversible diffusion settings such as
Strauch~\cite{Strauch2015} and Aeckerle-Willems and Strauch~\cite{AeckerleWillemsStrauch2022}.
\item \textbf{Fixman-like nonlinear drift:}
\(
 b(x_1,x_2)=\bigl(x_1(1-x_1^2),\;-0.5x_2\bigr).
\)
\\
This benchmark is inspired by the Fixman-potential example in
Schmisser~\cite{Schmisser2013} and is used to assess anisotropic
adaptation. 
\end{itemize}
The first two drifts satisfy Assumption~\ref{ass:standard_X} directly. The Fixman-like drift is smooth and locally Lipschitz, but it is not globally Lipschitz because of the cubic term in its first coordinate. We therefore use it as a nonlinear benchmark, while noting that it falls outside the globally Lipschitz framework of Assumption~\ref{ass:standard_X}.

All paths are simulated by the Euler--Maruyama scheme on the regular grid
\(t_k=k\Delta\), \(k=0,\ldots,n\), with \(\Delta=0.02\), \(T=50\), and
\(t_0=5\). Thus \(n=T/\Delta\), and only time indices \(k\) such that
\(t_k\ge t_0\) are used for estimation and evaluation.
We use the following two censoring mechanisms. Coordinatewise right censoring satisfies Assumption~\ref{ass:proj_decomp}, whereas random-ball censoring is covered by the locally DC argument in Remark~\ref{rem:scope-proj-decomp}(ii). The
auxiliary censoring processes are simulated independently of the Brownian increments used to generate the diffusion paths. Hence Assumption~\ref{ass:censoring-nondegeneracy}(C1) holds by construction.
\begin{itemize}[leftmargin=*,topsep=2pt,itemsep=3pt]
\item \textbf{Random ball censoring.} For trajectory \(i\),
\[
D^{(i)}_{t_k}=B(0,R^{(i)}_{t_k}),\qquad
Y^{(i)}_{t_k}=\Pi_{D^{(i)}_{t_k}}(X^{(i)}_{t_k}),\qquad
\delta^{(i)}_{t_k}=\mathbf 1_{\{|X^{(i)}_{t_k}|<R^{(i)}_{t_k}\}}.
\]
The radius process is generated by
\[
\widetilde R^{(i)}_{t_{k+1}}
=R^{(i)}_{t_k}+\kappa_R(r-R^{(i)}_{t_k})\Delta
+\sigma_R\sqrt{\Delta}\,\varepsilon^{(i)}_{t_{k+1}},\qquad
R^{(i)}_{t_{k+1}}=\max(\widetilde R^{(i)}_{t_{k+1}},R_{\min}),
\]
with \(R^{(i)}(0)=R_0\), \(R_0=3\), \(\kappa_R=0.8\),
\(\sigma_R=0.2\), and \(R_{\min}=0.3\).
\item \textbf{Coordinatewise right censoring.} For trajectory \(i\),
\[
D^{(i)}_{t_k}=(-\infty,\theta^{(i)}_{1,t_k}]\times(-\infty,\theta^{(i)}_{2,t_k}],
\quad
Y^{(i)}_{t_k}=X^{(i)}_{t_k}\wedge \theta^{(i)}_{t_k},
\quad
\delta^{(i)}_{t_k}
=\mathbf 1_{\{X^{(i)}_{1,t_k}<\theta^{(i)}_{1,t_k},\;
X^{(i)}_{2,t_k}<\theta^{(i)}_{2,t_k}\}}.
\]
The threshold process is generated coordinatewise by
\[
\theta^{(i)}_{j,t_{k+1}}
=\theta^{(i)}_{j,t_k}+\kappa_\theta(\theta_\infty-\theta^{(i)}_{j,t_k})\Delta
+\sigma_\theta\sqrt{\Delta}\,\xi^{(i)}_{j,t_{k+1}},
\qquad j=1,2,
\]
with \(\theta^{(i)}_j(0)=\theta_0\), \(\theta_0=3\), \(\kappa_\theta=0.8\), and
\(\sigma_\theta=0.15\).
\end{itemize}
In both mechanisms, \(\{\varepsilon^{(i)}_{t_{k+1}}\}\) and \(\{\xi^{(i)}_{j,t_{k+1}}\}\) are i.i.d.
standard normal variables over all displayed indices, and are independent
of the Brownian increments used to simulate \(X^{(i)}\). Since \(\sigma_R>0\), \(\sigma_\theta>0\), and the Gaussian variables are independent,
\(\mathbb P(R^{(i)}_{5}>2\sqrt 2)>0\) and
\(\mathbb P(\theta^{(i)}_{1,5}>2,\theta^{(i)}_{2,5}>2)>0\).
Thus, Condition (C2) holds with $t_{\star}=5$ for both mechanisms. Since $T>5$, the proof of Proposition \ref{prop:positive-denominator} yields $t_I>5$, so that the choice $t_0=5$ is admissible and $m_I>0$.

For each model and censoring
mechanism, the mean-reversion levels \(r,\theta_\infty\) are calibrated by a preliminary grid search so that the empirical
censoring rate is close to \(20\%\). We define the theoretical and empirical censoring rates by
\[
\mathrm{CR}=1-\frac{1}{T-t_0}\int_{t_0}^T\mathbb P(\delta(t)=1)\,dt,
\qquad
\widehat{\mathrm{CR}}
=1-\frac{1}{N(n-n_0+1)}\sum_{i=1}^N\sum_{k=n_0}^{n}\delta^{(i)}_{t_k},
\]
where \(n_0=t_0/\Delta\). The selected values are reported in
Table~\ref{tab:censoring-calibration}.

\begin{table}[!htbp]
\centering
\footnotesize
\setlength{\tabcolsep}{4pt}
\begin{tabular}{llccc}
\hline
Model & Censoring mechanism & Parameter & Value & \(\widehat{\mathrm{CR}}\) \\
\hline
OU & random ball & \(r\) & \(0.694737\) & \(0.1912\) \\
OU & coordinatewise right & \(\theta_\infty\) & \(0.494737\) & \(0.1809\) \\
Fixman-like & random ball & \(r\) & \(1.289167\) & \(0.2021\) \\
Fixman-like & coordinatewise right & \(\theta_\infty\) & \(1.084211\) & \(0.1850\) \\
rotational OU & random ball & \(r\) & \(0.694737\) & \(0.1878\) \\
rotational OU & coordinatewise right & \(\theta_\infty\) & \(0.494737\) & \(0.1810\) \\
\hline
\end{tabular}
\caption{Calibration of the censoring mechanisms. The parameters \(r\)
and \(\theta_\infty\) denote the mean-reversion levels of the random radius and coordinatewise thresholds, respectively.}
\label{tab:censoring-calibration}
\end{table}

Since the data are informative only where the process is sufficiently
often observed without censoring, graphical comparisons and error
evaluation are restricted to a common effective region \(\mathcal R\).
At the empirical level, \(\mathcal R\) plays the role of the compact estimation set $I$ used in the theoretical analysis. The risk bounds apply on any compact set over which the uncensored occupation density is bounded away from zero; see Proposition~\ref{prop:positive-denominator}. We therefore construct \(\mathcal R\) as a data-driven proxy for such a set, using a kernel estimate of the local uncensoring proportion. Importantly, \(\mathcal R\) is used only for evaluation and does not enter the construction of any estimator. All methods are evaluated on the same region. Its exact definition is given in Appendix~\ref{app:simulation-implementation-tables}.
Figure~\ref{fig:rotational-mask-context} shows the two projection
mechanisms and the resulting effective region for the rotational OU
model.

\begin{figure}[!t]
\centering
\includegraphics[width=0.66\textwidth]{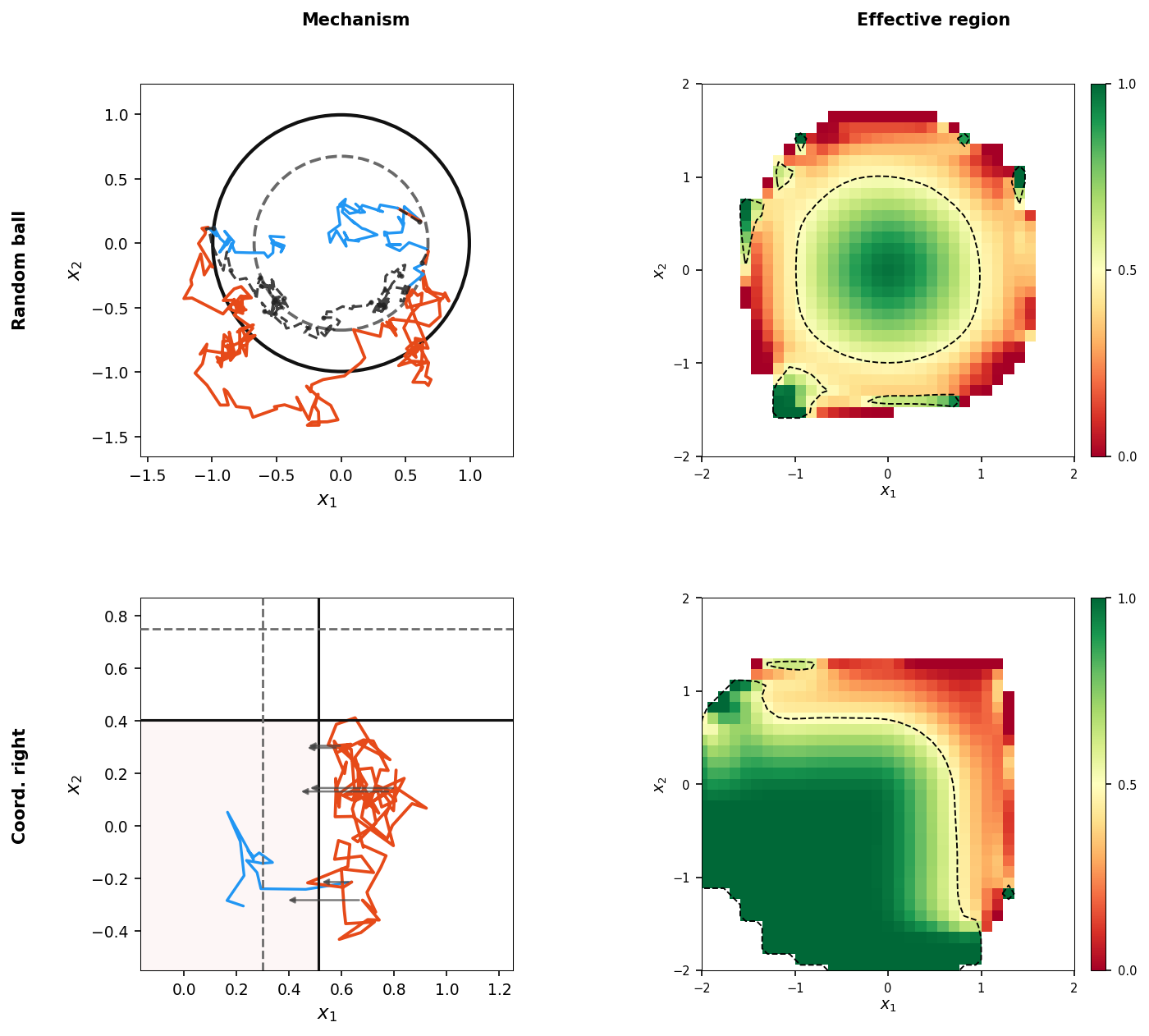}
\caption{Projection mechanisms and effective regions for the rotational
OU model. The top row corresponds to random ball censoring and the bottom
row to coordinatewise right censoring. In the left panels, blue trajectory
segments correspond to uncensored observations, while orange segments
correspond to censored observations. The right panels display
\(\widehat p_{\mathrm{unc}}\) on the evaluation grid, and the dashed curve
represents the boundary of \(\mathcal R\).}
\label{fig:rotational-mask-context}
\end{figure}

The effect of anisotropic smoothing is isolated in
Figure~\ref{fig:iso-aniso-fixman-focus}. For the Fixman-like model under
random ball censoring, the anisotropic grid can better adapt to the coordinate-dependent smoothness of the drift than the isotropic grid, particularly in regions dominated by the nonlinear first component.

\begin{figure}[!t]
\centering
\includegraphics[width=0.50\textwidth]{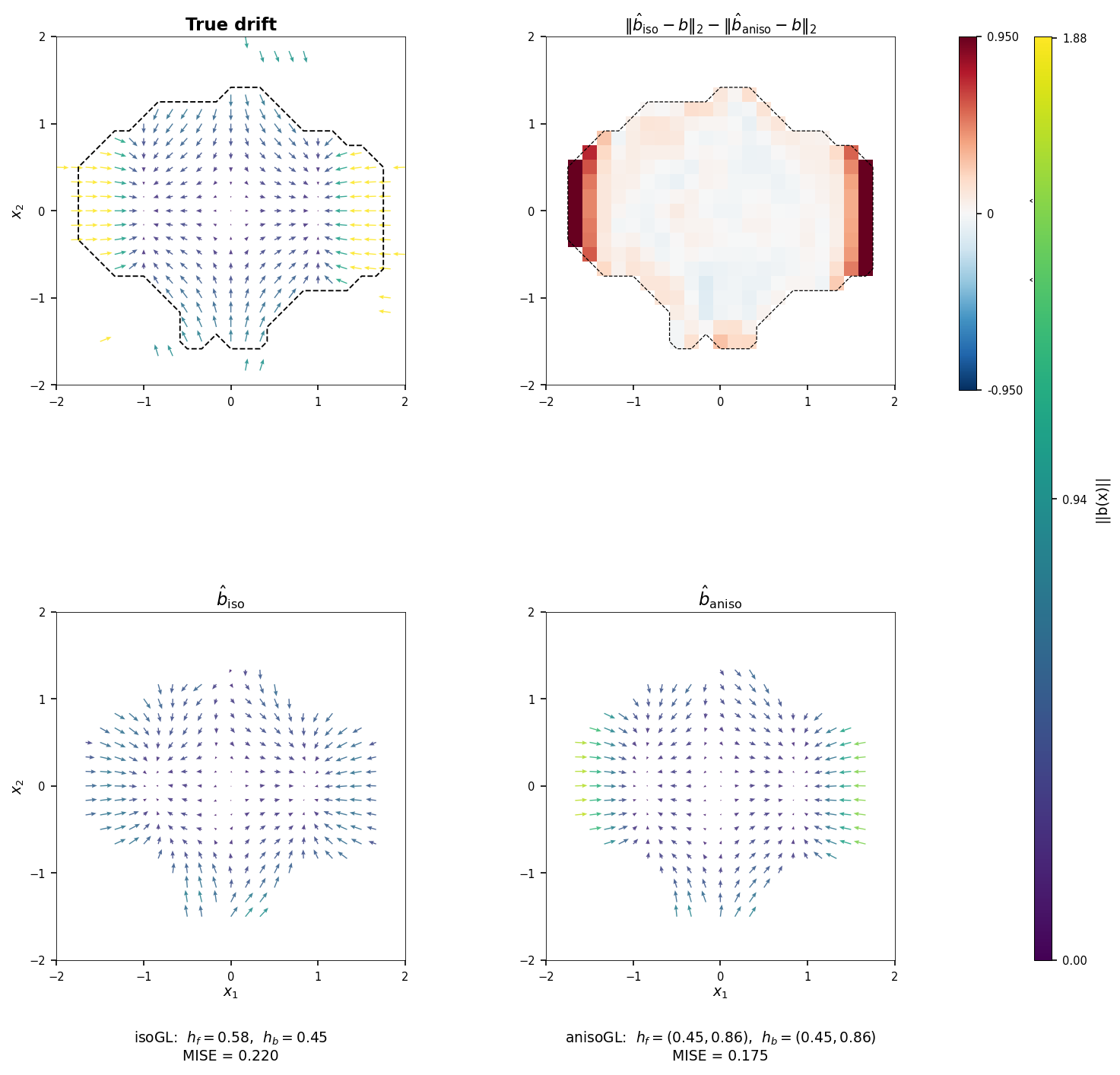}
\caption{Isotropic versus anisotropic GL reconstruction for the
Fixman-like model under random ball censoring, with \(N=50\) trajectories.
The upper-left panel shows the true drift and the boundary of the
effective region \(\mathcal R\). The lower panels show the isotropic and
anisotropic reconstructions. The upper-right panel displays
\(\|\widehat b_{\mathrm{iso}}^{\mathrm{GL}}-b\|_2
-\|\widehat b_{\mathrm{aniso}}^{\mathrm{GL}}-b\|_2\) on \(\mathcal R\);
positive values indicate a smaller pointwise error for the anisotropic
estimate.}
\label{fig:iso-aniso-fixman-focus}
\end{figure}

We now compare the three estimators visually at \(N=200\).
Figure~\ref{fig:fixman-like-comparison} shows the comparison for the Fixman-like nonlinear drift. The corresponding comparisons for the OU and
rotational OU drifts are reported in Figures~\ref{fig:linear-ou-comparison} and~\ref{fig:rotational-ou-comparison}.

\begin{figure}[!htbp]
\centering
\includegraphics[width=0.82\textwidth]{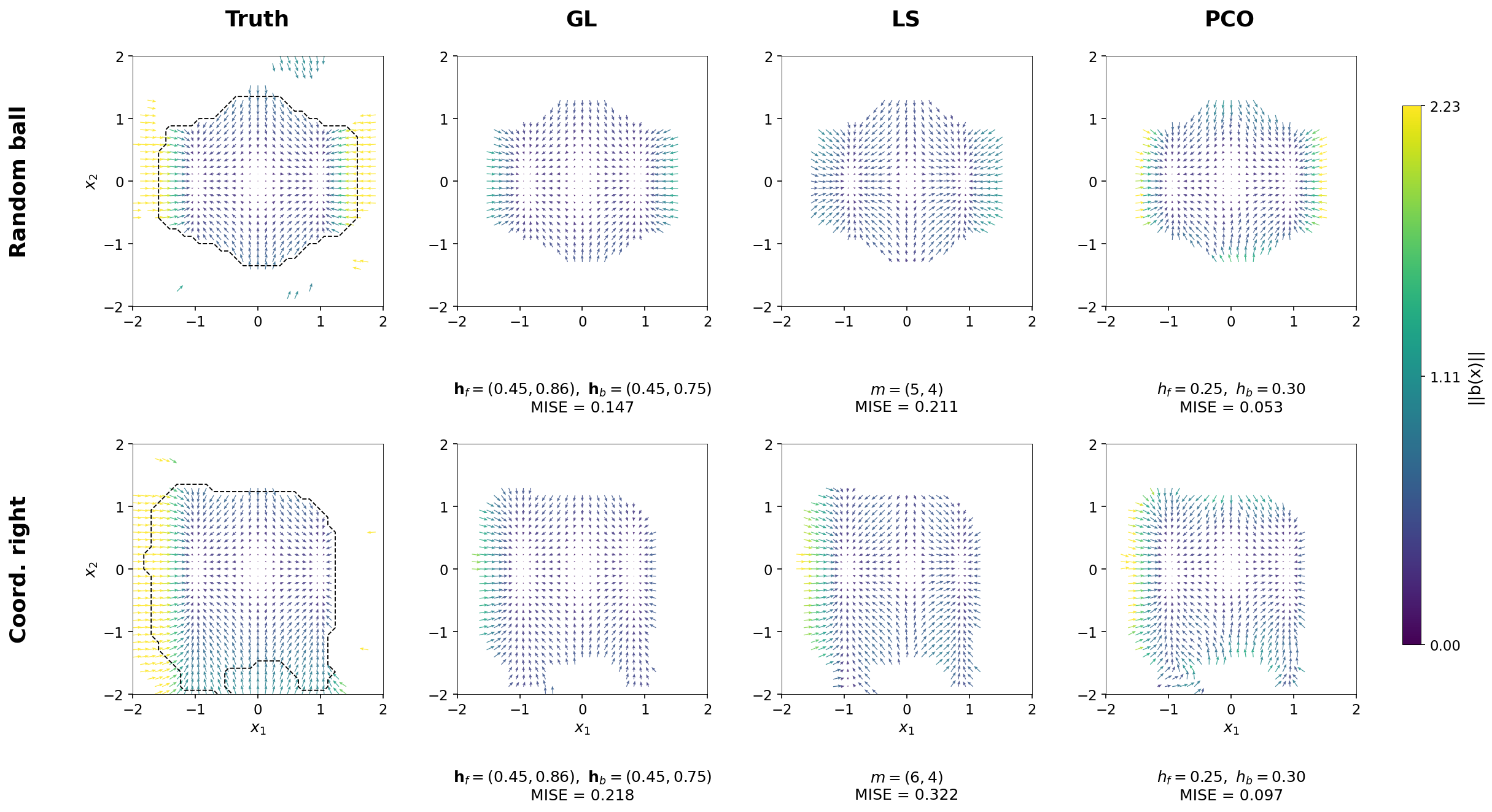}
\caption{Drift reconstruction for the Fixman-like model. The first
column shows the true drift field, and the next three columns show the
estimates obtained with \(\widehat b_N^{\mathrm{GL}}\),
\(\widehat b_N^{\mathrm{LS}}\), and
\(\widehat b_N^{\mathrm{PCO}}\). The top row corresponds to random ball
censoring and the bottom row to coordinatewise right censoring.}
\label{fig:fixman-like-comparison}
\end{figure}

Finally, we report empirical MISE curves as a function of the number
\(N\in\{50,100,200,300\}\) of independent trajectories, keeping \(T=50\)
fixed. The error is computed on \(\mathcal R\) over a \(35\times35\)
uniform grid on \([-2,2]^2\), with the normalization specified in
Appendix~\ref{app:simulation-implementation-tables}.
Figure~\ref{fig:mise-comparison} displays the MISE curves, while Tables~\ref{tab:mise-linear-ou}--\ref{tab:mise-fixman-like} report the corresponding numerical values.

The PCO estimator is empirically stable across the models and censoring mechanisms considered, and it achieves competitive performance in most settings. These numerical results do not constitute a theoretical guarantee. Among the three procedures, only the GL estimator satisfies an oracle inequality in the present censored setting. The LS estimator is less stable in some configurations, especially when the effective
region is irregular or the drift is strongly nonlinear, but it remains
computationally fast. The GL
estimator performs well for the two linear drifts, whereas its errors are larger for the
Fixman-like model. One possible explanation is the instability of the ratio estimator in regions where the estimated denominator is small or varies rapidly. This is consistent with the factor \(m_I^{-2}\) in the theoretical risk bound for the ratio estimator.

\FloatBarrier

\section{Concluding remarks}
\label{sec:concluding-remarks}

We conclude with three possible extensions of the present work.

First, the censoring law is fixed throughout the paper, and the analysis
requires the uncensored occupation density to be uniformly positive on the
estimation region. It would be interesting to study a regime of disappearing
visibility, where this density may decrease with $N$. The corresponding rates
should depend on an effective sample size reflecting both the number of
trajectories and the visibility level, and identifying the associated minimax
transition remains an open problem.

Second, our adaptive procedure is based on the Goldenshluger--Lepski method.
Developing a PCO alternative under censoring would require new arguments,
because the censoring indicator generates additional terms that are absent
under full observation.

Finally, we consider $N$ independent trajectories over a fixed time horizon.
In the complementary regime $N=1$ and $T\to\infty$, independence-based
concentration arguments would have to be replaced by mixing techniques.
Obtaining sharp risk bounds and quantifying the loss of information caused by
censoring in this long-time setting are natural directions for future work.

\section*{Acknowledgments}
The author expresses his sincere gratitude to both of his supervisors, Nicolas Marie (Laboratoire Modal'X, Universit\'e Paris Nanterre) and
Fabienne Comte (Laboratoire MAP5, Universit\'e Paris Cit\'e), for their invaluable guidance, insightful discussions, and continued support throughout the
development of this work.

\clearpage
\appendix
\setcounter{equation}{0}
\numberwithin{equation}{section}
\setcounter{figure}{0}
\numberwithin{figure}{section}
\setcounter{table}{0}
\numberwithin{table}{section}

\textit{Notation.}
For matrices,
\(\langle A,B\rangle_F:=\operatorname{Tr}(A^\top B)\) and
\(\|A\|_F:=\langle A,A\rangle_F^{1/2}\).

\section{Proofs for Section~\ref{sec:model-preliminaries}}
\label{app:proofs-model}

\begingroup
 
\subsection{Proof of Proposition~\ref{prop:projection-DC}}
\label{Sec:pf-projection-DC}

We first prove (i). Fix \((\bar \theta, \bar x)\in\Theta\times\mathbb R^d\). Since \(\Theta\times\mathbb R^d\) is open, there exists \(\rho_\theta,\rho_x>0\) such that 
\[
    \mathsf K:=\mathsf K_\theta\times \mathsf K_x := \prod_{i=1}^p[\bar \theta_{i}-\rho_\theta,\bar \theta_{i}+\rho_\theta] \times \prod_{j=1}^d[\bar x_{j}-\rho_x,\bar x_{j}+\rho_x] \subset \Theta\times\mathbb R^d.
\]
$\mathsf K$ is a polyhedron since it is described by finitely many linear inequalities
\[
    \mathsf K = \{ (\theta,x) \in  \Theta\times \mathbb R^d :\theta_i\le \bar \theta_{i}+\rho_\theta,\quad
    -\theta_i\le -\bar \theta_{i}+\rho_\theta,\quad
    x_j\le \bar x_{j}+\rho_x,\quad
    -x_j\le -\bar x_{j}+\rho_x \}.
\]
For fixed \((\theta,x)\), write \(y=x+u\). Then \eqref{optimization_problem} is equivalent to finding
\[
    u^\star(\theta,x)\in
    \arg\min_{u\in\mathbb R^d}\frac12|u|^2
    \quad\text{subject to}\quad
    Mu\le c_0+\mathsf B_D\theta-Mx,
\]
and \(\Pi_{D(\theta)}x=x+u^\star(\theta,x)\). With \(\xi=(\theta,x)\in\mathsf K\), this is exactly the form of Lemma~\ref{lem:mpqp-pwa}, with
\[
    \Xi=\mathsf K,\qquad
    \mathsf H=I_d,\qquad
    \mathsf A=M,\qquad
    w=c_0,\qquad
    S=(\mathsf B_D,-M).
\]
The feasibility condition holds by (S). Indeed, for each
\(\theta\), condition~(S) gives \(y_\theta\) such that
\(My_\theta<c(\theta)\). Hence, for any \(x\), the point \(u=y_\theta-x\)
satisfies
\[
Mu=My_\theta-Mx<c(\theta)-Mx=c_0+\mathsf B_D\theta-Mx,
\]
and is therefore feasible. Hence Lemma~\ref{lem:mpqp-pwa} gives that
\((\theta,x)\mapsto u^\star(\theta,x)\) is continuous and piecewise affine on
\(\mathsf K\). Since \((\theta,x)\mapsto x\) is affine,
\(
    F(\theta,x):=\Pi_{D(\theta)}x=x+u^\star(\theta,x)
\)
is continuous and piecewise affine on \(\mathsf K\), and therefore on the open neighborhood $U:=\operatorname{int} \mathsf K$ of \((\bar \theta,\bar x)\).

It remains to pass from piecewise affinity to local DC regularity. To this end, we use the
standard result that every continuous piecewise affine function can be
represented as the difference of two convex piecewise affine functions; see
Kripfganz and Schulze~\cite{kripfganz1987piecewise}. Hence each
coordinate \(F_j\), being continuous and piecewise affine, is DC
on \(U\). Since \((\bar \theta,\bar x)\) was arbitrary, every
\(F_j\) is locally DC on \(\Theta\times\mathbb R^d\).

Part (ii) is a direct application of Bouleau~\cite{bouleau1984formules}, since for every
\(j\in\{1,\ldots,d\}\), \((\theta,x) \mapsto F_j(\theta,x)\) is locally DC and $Z(t)=(\theta(t),X(t))$ is a continuous semi-martingale by definition.
\qed

\endgroup

\subsection{Proof of Theorem~\ref{thm:semimartingale-identity}}
\label{Sec:pf-lem:pathwise}

Fix \(j\in\{1,\ldots,d\}\) and put \(d_Z=p+d\). By
Proposition~\ref{prop:projection-DC}, the coordinate \(Y_j=F_j(Z)\)
admits the semimartingale decomposition
\eqref{eq:Y-semimartingale-decomp}; it remains to identify this
decomposition on the uncensored region \(\mathcal U_D\), where
\(F(\theta,x)=x\) and hence \(F_j=\ell_j\) with
\(\ell_j(\theta,x):=x_j\). Write
\(a_j:=\nabla\ell_j\in\mathbb R^{d_Z}\), the vector whose only nonzero
coordinate, equal to \(1\), is in position \(p+j\). We first verify
that the selections in Proposition~\ref{prop:projection-DC} can be
chosen so that \(F_j^*=a_j\) on \(\mathcal U_D\).

Take the open cover \((U_{j,n})_{n\ge1}\), the sets
\((C_{j,n})_{n\ge1}\), the local DC decompositions
\(F_j=G_{j,n}-H_{j,n}\) on \(U_{j,n}\), and the measurable subgradient
selections \((\overline G_{j,n}^*,\overline H_{j,n}^*)\) from
Proposition~\ref{prop:projection-DC}, and set
\(E_{j,n}:=\mathcal U_D\cap C_{j,n}\). Fix \(n\) and
\(z\in\mathcal U_D\cap U_{j,n}\). Since \(G_{j,n}-H_{j,n}=\ell_j\) in a
neighborhood of \(z\), the convex functions \(G_{j,n}\) and
\(H_{j,n}+\ell_j\) coincide near \(z\), and we claim that
\(\partial G_{j,n}(z)=\partial(H_{j,n}+\ell_j)(z)\).

Indeed, let
\(v\in\partial(H_{j,n}+\ell_j)(z)\) and \(y\in U_{j,n}\). For all
sufficiently small \(\lambda\in(0,1)\),
\(z_\lambda:=z+\lambda(y-z)\) lies in the neighborhood where
\(G_{j,n}=H_{j,n}+\ell_j\), so the subgradient inequality at \(z\) and
the convexity of \(G_{j,n}\) give
\[
G_{j,n}(z)+\lambda\langle v,y-z\rangle
\le
G_{j,n}(z_\lambda)
\le
(1-\lambda)G_{j,n}(z)+\lambda G_{j,n}(y),
\]
whence \(G_{j,n}(y)\ge G_{j,n}(z)+\langle v,y-z\rangle\), that is,
\(v\in\partial G_{j,n}(z)\); the reverse inclusion follows by
exchanging the roles of \(G_{j,n}\) and \(H_{j,n}+\ell_j\).

By the
subdifferential sum rule
\cite[Theorem~23.8]{Rockafellar1970}, therefore,
\[
\partial G_{j,n}(z)
=\partial H_{j,n}(z)+\{a_j\},
\qquad z\in\mathcal U_D\cap U_{j,n}.
\]
Now redefine, on \(U_{j,n}\),
\[
H_{j,n}^*:=\overline H_{j,n}^*,
\qquad
G_{j,n}^*
:=
\mathbf 1_{E_{j,n}}\bigl(\overline H_{j,n}^*+a_j\bigr)
+\mathbf 1_{U_{j,n}\setminus E_{j,n}}\,\overline G_{j,n}^*.
\]
Since \(\mathcal U_D\) is Borel, these are measurable subgradient
selections on \(U_{j,n}\), and
\(G_{j,n}^*-H_{j,n}^*=a_j\) on \(E_{j,n}\). Consequently, for
\(z\in\mathcal U_D\),
\(\sum_{n\ge1}\mathbf1_{C_{j,n}}(z)
\bigl(G_{j,n}^*(z)-H_{j,n}^*(z)\bigr)=a_j\),
so the choice \(F_j^*=a_j\) on \(\mathcal U_D\) is compatible with the
representation in Proposition~\ref{prop:projection-DC}. This proves
\eqref{eq:Fstar-on-U}.

\begingroup 
For the finite-variation term, fix \(\omega\) outside a null set on
which the decomposition of Proposition~\ref{prop:projection-DC} holds
on \([0,T]\), and set
\(O_\omega:=\{t\in[0,T]:Z(t,\omega)\in\mathcal U_D\}\). Since
\(\mathcal U_D\) is open and \(t\mapsto Z(t,\omega)\) is continuous,
\(O_\omega\) is relatively open in \([0,T]\), hence a countable
disjoint union of relatively open intervals, say
\(O_\omega=\bigcup_{n\ge1}I_n\). On any \([a,b]\subset I_n\), the map
\(F_j\) coincides along \(Z(\cdot,\omega)\) with the affine map
\(\ell_j\), whose second derivatives vanish; hence, by
Bouleau~\cite[Proposition~4]{bouleau1984formules}, for every
\(t\in[a,b]\),
\[
A_j(t,\omega)-A_j(a,\omega)
=
\frac12\sum_{r,s=1}^{d_Z}
\int_a^t
\partial^2_{rs}F_j(Z(u,\omega))\,
d\langle Z_r,Z_s\rangle(u,\omega)
=0.
\]
Thus \(A_j(\cdot,\omega)\) is constant on each \(I_n\), so
\(|dA_j|_\omega(I_n)=0\) for every \(n\), and by countable
subadditivity
\(|dA_j|_\omega(O_\omega)=0\), which is
\eqref{eq:A-vanishes-on-U} since
\(\delta(t,\omega)=\mathbf1_{O_\omega}(t)\).
\endgroup

Finally, using successively \eqref{eq:Fstar-on-U} and
\eqref{eq:A-vanishes-on-U},
\[
\int_0^T\delta(t)\Phi(t)\,dY_j(t)
=
\int_0^T\delta(t)\Phi(t)\,dZ_{p+j}(t)
=
\int_0^T\delta(t)\Phi(t)\,dX_j(t),
\]
which is \eqref{eq:pathwise-identity}.
\qed

\subsection{Proof of Proposition~\ref{prop:positive-denominator}}
\label{Sec:pf-positive-denominator}
Set \(\Theta_I:=\{a\in\Theta:I\subset\operatorname{Int}(D(a))\}\).

We first note that \(\Theta_I\) is open. Fix \(a\in\Theta_I\). For each
\(x\in I\), the point \((a,x)\) lies in the open set \(\mathcal U_D\),
so there are open neighborhoods \(O_x\subset\Theta\) of \(a\) and
\(V_x\subset\mathbb R^d\) of \(x\) with
\(O_x\times V_x\subset\mathcal U_D\). By compactness, pick
\(x_1,\ldots,x_k\in I\) with \(I\subset\bigcup_{\ell=1}^k V_{x_\ell}\);
then \(O:=\bigcap_{\ell=1}^k O_{x_\ell}\) is a neighborhood of \(a\)
satisfying \(O\times I\subset\mathcal U_D\), that is,
\(O\subset\Theta_I\).

By Assumption~\ref{ass:censoring-nondegeneracy},
\(\mathbb P(\theta(t_\star)\in\Theta_I)>0\). Since \(\Theta_I\) is open,
there exist a compact set \(C_I\subset\Theta_I\) and \(\eta>0\) such that
\(\mathbb P(\theta(t_\star)\in C_I)>0\) and
\(C_I^\eta:=\{a\in\Theta:\operatorname{dist}(a,C_I)\le\eta\}
\subset\Theta_I\). By continuity of \(\theta\), there exists
\(\varepsilon_I>0\) such that
\[
\pi_I:=
\mathbb P\Bigl(
\theta(t_\star)\in C_I,\
\sup_{\substack{t\in[0,T]\\ |t-t_\star|\le\varepsilon_I}}
|\theta(t)-\theta(t_\star)|\le\eta
\Bigr)>0.
\]
Set \(t_I:=\min\{T,t_\star+\varepsilon_I\}\). On this event,
\(\theta(t)\in C_I^\eta\subset\Theta_I\) whenever \(t\in[0,T]\) and
\(|t-t_\star|\le\varepsilon_I\), whence, for every \(x\in I\),
\(\overline G_t(x)
=\mathbb P\bigl(x\in\operatorname{Int}(D(\theta(t)))\bigr)
\ge\pi_I\) for
\(t\in[\max\{0,t_\star-\varepsilon_I\},t_I]\).
Finally, for \(t_0\in(0,t_I)\) and \(x\in I\),
Proposition~\ref{ass:gaussian-density} gives
\[
\begin{aligned}
\overline f(x)
&\ge
\frac{1}{T-t_0}\int_{\max\{t_0,t_\star-\varepsilon_I\}}^{t_I}
\overline G_t(x)\,p_t(x_0,x)\,dt
\\
&\ge
\frac{t_I-\max\{t_0,t_\star-\varepsilon_I\}}{T-t_0}\,\pi_I
\inf_{t\in[\max\{t_0,t_\star-\varepsilon_I\},t_I]}
\inf_{z\in I}p_t(x_0,z)
>0,
\end{aligned}
\]
which proves \eqref{eq:mI-positive}.

\qed
\section{Proofs for Section~\ref{sec:kernel-estimator}}
\label{app:proofs-kernel}

Throughout the proofs, we repeatedly use the fact that, by translation
invariance and a change of variables, for every \(y\in\mathbb R^d\) and
\(\boldsymbol h\in(0,1]^d\),
{\small
\begin{equation}\label{eq:kernel-L2-norm}
\int_{\mathbb R^d}K_{\boldsymbol h}(y-x)^2\,dx
=\|K_{\boldsymbol h}\|_2^2
=\Bigl(\prod_{j=1}^d h_j\Bigr)^{-1}\|K\|_2^2.
\end{equation}
}

\subsection{Proof of Proposition~\ref{prop:fixed-bandwidth-risk}}
\label{pf:prop:fixed-bandwidth-risk}

We first prove \eqref{eq:risk-den-aniso}. For a single trajectory, set
\[
F_{\boldsymbol h'}(x)
:=
\frac{1}{T-t_0}\int_{t_0}^T\delta(t)K_{\boldsymbol h'}(Y(t)-x)\,dt,
\]
so that \(\widehat f_{N,\boldsymbol h'}=N^{-1}\sum_{i=1}^N
F^{(i)}_{\boldsymbol h'}\) with i.i.d.\
\(L^2(\mathbb R^d;\mathbb R)\)-valued summands, and, by
\eqref{eq:expectation-fhat},
\(\mathbb E[\widehat f_{N,\boldsymbol h'}]=K_{\boldsymbol h'}*\overline f\).
The bias-variance decomposition and independence give
\begin{align*}
    \mathbb E\bigl[\|\widehat f_{N,\boldsymbol h'}-\overline f\|_2^2\bigr]
=
\|K_{\boldsymbol h'}*\overline f-\overline f\|_2^2
+
\frac1N\,
\mathbb E\bigl[\|F_{\boldsymbol h'}
-\mathbb E[F_{\boldsymbol h'}]\|_2^2\bigr]
\le
\|K_{\boldsymbol h'}*\overline f-\overline f\|_2^2
+
\frac1N\,\mathbb E\bigl[\|F_{\boldsymbol h'}\|_2^2\bigr].
\end{align*}
By the Cauchy--Schwarz inequality,
Fubini's theorem, \(\delta\le1\), and \eqref{eq:kernel-L2-norm},
\[
\mathbb E\bigl[\|F_{\boldsymbol h'}\|_2^2\bigr]
\le
\frac{1}{T-t_0}\int_{t_0}^T
\mathbb E\Bigl[\int_{\mathbb R^d}
K_{\boldsymbol h'}(X(t)-x)^2\,dx\Bigr]dt
=
\Bigl(\prod_{j=1}^d h'_j\Bigr)^{-1}\|K\|_2^2,
\]
which proves \eqref{eq:risk-den-aniso}.

We now prove \eqref{eq:risk-num-aniso}. For a single trajectory, define
the \(L^2(\mathbb R^d;\mathbb R^d)\)-valued random function
\[
Z_{\boldsymbol h}(x)
:=
\frac{1}{T-t_0}\int_{t_0}^T
\delta(t)K_{\boldsymbol h}(Y(t)-x)\,dY(t),
\]
so that \(\widehat{bf}_{N,\boldsymbol h}
=N^{-1}\sum_{i=1}^N Z^{(i)}_{\boldsymbol h}\) with i.i.d.\ summands
and, by \eqref{eq:expectation-bfhat},
\(\mathbb E[\widehat{bf}_{N,\boldsymbol h}]
=K_{\boldsymbol h}*(b\overline f)\). As above,
\[
\mathbb E\bigl[\|\widehat{bf}_{N,\boldsymbol h}-b\overline f\|_2^2\bigr]
\le
\|K_{\boldsymbol h}*(b\overline f)-b\overline f\|_2^2
+
\frac1N\,\mathbb E\bigl[\|Z_{\boldsymbol h}\|_2^2\bigr].
\]
Applying Theorem~\ref{thm:semimartingale-identity} with
\(\Phi(t)=K_{\boldsymbol h}(Y(t)-x)\), using the definition of
\(\delta(t)\), and then \eqref{eq:SDE_X},
\begin{align}\label{eq:DM-decomposition}
Z_{\boldsymbol h}(x)
&=
\frac{1}{T-t_0}\int_{t_0}^T
\delta(t)K_{\boldsymbol h}(X(t)-x)\,b(X(t))\,dt
+
\frac{1}{T-t_0}\int_{t_0}^T
\delta(t)K_{\boldsymbol h}(X(t)-x)\,\Sigma(X(t))\,dW(t) \nonumber \\& =:D_{\boldsymbol h}(x)+ M_{\boldsymbol h}(x),
\end{align}
and \(\mathbb E[\|Z_{\boldsymbol h}\|_2^2]
\le2\,\mathbb E[\|D_{\boldsymbol h}\|_2^2]
+2\,\mathbb E[\|M_{\boldsymbol h}\|_2^2]\).
For the drift part, \begingroup the same argument as for
\(F_{\boldsymbol h'}\), followed by
\eqref{def:uncensoring-prob}, gives\endgroup
\begin{equation}\label{eq:drift-bound}
\mathbb E\bigl[\|D_{\boldsymbol h}\|_2^2\bigr]
\le
\frac{\bigl(\prod_{j=1}^d h_j\bigr)^{-1}\|K\|_2^2}{T-t_0}
\int_{t_0}^T
\mathbb E\bigl[\delta(t)|b(X(t))|^2\bigr]dt
=
\Bigl(\prod_{j=1}^d h_j\Bigr)^{-1}\|K\|_2^2\,
\|b\|_{2,\overline f}^2.
\end{equation}
For the martingale part, \begingroup Fubini's theorem and
Itô's isometry, applied componentwise, together with
\(\sum_{j=1}^d|\Sigma_{j\cdot}(x)|^2
=\operatorname{Tr}(\Sigma\Sigma^\top)(x)\),
\eqref{eq:kernel-L2-norm}, and
\eqref{def:uncensoring-prob}, give\endgroup
\begin{align}\label{eq:martingale-bound}
\mathbb E\bigl[\|M_{\boldsymbol h}\|_2^2\bigr]
&=
\frac{\bigl(\prod_{j=1}^d h_j\bigr)^{-1}\|K\|_2^2}{(T-t_0)^2}
\int_{t_0}^T
\mathbb E\bigl[\delta(t)
\operatorname{Tr}(\Sigma\Sigma^\top)(X(t))\bigr]dt \nonumber
\\&=
\frac{\bigl(\prod_{j=1}^d h_j\bigr)^{-1}\|K\|_2^2}{T-t_0}\,
\bigl\|\sqrt{\operatorname{Tr}(\Sigma\Sigma^\top)}
\bigr\|_{2,\overline f}^2.
\end{align}
Combining \eqref{eq:drift-bound} and \eqref{eq:martingale-bound}
proves \eqref{eq:risk-num-aniso}.

It remains to prove \eqref{eq:risk-ratio-aniso}. Let
\(J(x):=\mathbf1_{\{\widehat f_{N,\boldsymbol h'}(x)>m_I/2\}}\).
For \(x\in I\),
\[
\widehat b_{N,\boldsymbol h,\boldsymbol h'}(x)-b(x)
=
J(x)\,
\frac{\widehat{bf}_{N,\boldsymbol h}(x)-b(x)\overline f(x)}
{\widehat f_{N,\boldsymbol h'}(x)}
+
J(x)\,b(x)\,
\frac{\overline f(x)-\widehat f_{N,\boldsymbol h'}(x)}
{\widehat f_{N,\boldsymbol h'}(x)}
-
(1-J(x))\,b(x).
\]
On \(\{J=1\}\),
\(\widehat f_{N,\boldsymbol h'}(x)^{-2}\le4m_I^{-2}\); on \(\{J=0\}\),
since \(\overline f(x)\ge m_I\) on \(I\),
\(1\le4m_I^{-2}
\bigl(\widehat f_{N,\boldsymbol h'}(x)-\overline f(x)\bigr)^2\).
\begingroup Since the terms multiplied by \(J(x)\) and
\(1-J(x)\) are supported on disjoint events, the inequality
\(|u+v|_d^2\le2|u|_d^2+2|v|_d^2\) yields, for every \(x\in I\),\endgroup
\begin{equation}\label{eq:risk-ratio-fixed}
\bigl|\widehat b_{N,\boldsymbol h,\boldsymbol h'}(x)-b(x)\bigr|^2
\le
\frac{8}{m_I^2}
\bigl|\widehat{bf}_{N,\boldsymbol h}(x)-b(x)\overline f(x)\bigr|_d^2
+
\frac{12}{m_I^2}\,|b(x)|^2
\bigl(\widehat f_{N,\boldsymbol h'}(x)-\overline f(x)\bigr)^2.
\end{equation}
\begingroup Integrating over \(I\), taking expectations, and
using \(8\le\mathrm C_{\mathrm{NW}}\) and
\(12\|b\|_{\infty,I}^2\le\mathrm C_{\mathrm{NW}}\) proves
\eqref{eq:risk-ratio-aniso} and completes the proof.\endgroup
\qed
\subsection{Auxiliary tools for the adaptive analysis}
\label{app:adaptive-auxiliary-tools}

\subsubsection{Elementary counting bounds}
\label{sec:app:product-counting}
The following elementary estimates, used to verify the size and
summability conditions on the bandwidth grids of
Example~\ref{ex:admissible-grids}, follow from standard counting
arguments; see Stanley~\cite[Chapter~1,
Sections~1.1--1.2]{stanley2012enumerative}.

\begin{lemma}
\label{lem:product-index-counting}
For every fixed \(d\in\mathbb N\), the following bounds hold.
\begin{enumerate}[label=(\roman*), leftmargin=*]
\item For every \(s\in\mathbb N_0\),
\begin{equation}
\label{eq:count-fixed-s}
\#\bigl\{
\ell\in\mathbb N_0^d:
\ell_1+\cdots+\ell_d=s
\bigr\}
=
\binom{s+d-1}{d-1},
\end{equation}
and consequently, for every \(J\in\mathbb N_0\),
\(
\sum_{s=0}^{J}
\#\bigl\{
\ell\in\mathbb N_0^d:
\ell_1+\cdots+\ell_d=s
\bigr\}
=
\binom{J+d}{d}
\le C_d(J+1)^d.
\)
\item For every \(N\ge2\),
\(
\#\bigl\{
k\in\mathbb N^d:
\prod_{j=1}^{d} k_j\le N
\bigr\}
\le
C_dN(\log N)^{d-1}.
\)
\item For every \(m\in\mathbb N\),
\(
\#\bigl\{
k\in\mathbb N^d:
\prod_{j=1}^{d} k_j=m
\bigr\}
\le
m^{d-1}.
\)
\end{enumerate}
\end{lemma}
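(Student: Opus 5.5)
For (i), I will use the stars-and-bars bijection. A composition \(\ell\in\mathbb N_0^d\) with \(\ell_1+\cdots+\ell_d=s\) is encoded by a word of \(s\) stars and \(d-1\) bars: \(\ell_j\) is the number of stars between the \((j-1)\)-th and \(j\)-th bar. This encoding is a bijection onto arrangements of \(s+d-1\) symbols with \(d-1\) of them bars, which gives \eqref{eq:count-fixed-s}.

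For the cumulative sum, I will add a slack variable \(\ell_{d+1}:=J-s\ge0\). This identifies \(\{\ell\in\mathbb N_0^d:\sum_j\ell_j\le J\}\) with \(\{\ell\in\mathbb N_0^{d+1}:\sum_j\ell_j=J\}\), so its cardinality is \(\binom{J+d}{d}\). The polynomial bound then follows from
\[
\binom{J+d}{d}=\prod_{i=1}^d\frac{J+i}{i}\le\prod_{i=1}^d (J+1)=(J+1)^d,
\]
since \((J+i)/i\le J+1\) for every \(i\ge1\). So one can take \(C_d=1\).

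For (iii), I will fix \(m\) and note that a tuple with \(\prod_j k_j=m\) is determined by \((k_1,\ldots,k_{d-1})\), because then \(k_d=m/\prod_{j<d}k_j\). Each \(k_j\) with \(j<d\) must divide \(m\), so there are at most \(\tau(m)\le m\) choices for it, where \(\tau\) is the divisor function. This gives the bound \(m^{d-1}\), with the case \(d=1\) trivially equal to \(1\).

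For (ii), I will argue by induction on \(d\) with the statement \(\#\{k\in\mathbb N^d:\prod_j k_j\le x\}\le x(1+\log x)^{d-1}\) for all real \(x\ge1\).
\begin{itemize}
\item For \(d=1\) the count is \(\lfloor x\rfloor\le x\).
\item For the induction step I condition on \(k_d\): the count equals
\[
\sum_{k_d\le x}\#\Bigl\{k'\in\mathbb N^{d-1}:\textstyle\prod_{j<d} k_j\le x/k_d\Bigr\}\le \sum_{k_d\le x}\frac{x}{k_d}\bigl(1+\log x\bigr)^{d-2},
\]
using the hypothesis at \(x/k_d\ge1\) together with \(\log(x/k_d)\le\log x\).
\item The harmonic bound \(\sum_{k\le x}1/k\le 1+\log x\) then closes the induction.
\end{itemize}
With \(N\ge2\) we have \(1+\log N\le C\log N\) (for instance \(C=1+1/\log 2\)), which yields \(C_dN(\log N)^{d-1}\).

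The main, though mild, obstacle is this induction in (ii). Two points need care: the hypothesis must be stated for real arguments \(x\ge1\) rather than integers \(N\), and the constant \(1+\log\) must be converted to \(C\log N\) only at the end. If one tries to work with \(\log N\) directly, the argument breaks down at \(N=1\).
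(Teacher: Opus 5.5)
Your proof is correct. Parts (i) and (iii) follow the paper's argument: stars and bars with a slack coordinate $\ell_{d+1}=J-s$ in (i), and in (iii) the observation that $k_1,\ldots,k_{d-1}$ determine $k_d$. In (i), your factor-by-factor estimate $(J+i)/i\le J+1$ gives $C_d=1$, which is sharper than the paper's $d^d/d!$. Using divisors in (iii) is a harmless refinement of the paper's bound $k_j\le m$.

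For (ii) the paper does not induct. It writes the count exactly as
$\sum_{k_1=1}^{N}\cdots\sum_{k_{d-1}=1}^{N}\lfloor N/\prod_{j<d}k_j\rfloor$, bounds the floor by $N/\prod_{j<d}k_j$, and lets the sum factorize into $N(\sum_{k\le N}1/k)^{d-1}\le N(1+\log N)^{d-1}$. This is your induction unrolled, with each free coordinate allowed to range over all of $\{1,\ldots,N\}$ rather than only up to $x/k_d$. It yields the same bound in one line.

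Because the paper never applies the estimate at a reduced threshold $x/k_d$, the points you flag as the main obstacle simply do not arise there. You needed real arguments $x\ge1$ and the form $1+\log$ instead of $\log$ only because of the inductive formulation. Your version buys a statement valid for every real $x\ge1$, which is correct but not needed for the grid counts. Your explicit conversion $1+\log N\le(1+1/\log 2)\log N$ for $N\ge2$ is the step the paper leaves implicit.
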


\begin{proof}
(i) The identity \eqref{eq:count-fixed-s} is the classical
stars-and-bars count. A vector \(\ell\in\mathbb N_0^d\) with
\(\ell_1+\cdots+\ell_d=s\) corresponds bijectively to a placement of
\(d-1\) separators among \(s\) unit symbols. For the second assertion, appending the extra coordinate
\(\ell_{d+1}:=J-(\ell_1+\cdots+\ell_d)\) puts
\(\{\ell\in\mathbb N_0^d:\ell_1+\cdots+\ell_d\le J\}\) in bijection
with \(\{\ell\in\mathbb N_0^{d+1}:\ell_1+\cdots+\ell_{d+1}=J\}\), so
that \eqref{eq:count-fixed-s}, applied with \(d+1\) in place of \(d\)
and \(s=J\), gives the count \(\binom{J+d}{d}\). Finally,
\(
\binom{J+d}{d}
=\frac{(J+1)\cdots(J+d)}{d!}
\le\frac{d^d}{d!}(J+1)^d
=:C_d(J+1)^d.
\)

(ii) Summing over the free coordinates \(k_1,\ldots,k_{d-1}\),
\[
\#\bigl\{k\in\mathbb N^d:\prod_{j=1}^{d} k_j\le N\bigr\}
=
\sum_{k_1=1}^{N}\cdots\sum_{k_{d-1}=1}^{N}
\Bigl\lfloor\frac{N}{\prod_{j=1}^{d-1} k_j}\Bigr\rfloor
\le
N\Bigl(\sum_{k=1}^{N}\frac1k\Bigr)^{d-1}
\le
N(1+\log N)^{d-1}.
\]

(iii) Each of \(k_1,\ldots,k_{d-1}\) lies in \(\{1,\ldots,m\}\), and
they determine \(k_d\) through \(\prod_{j=1}^{d} k_j=m\) whenever an integer
solution exists; hence there are at most \(m^{d-1}\) admissible
vectors.
\end{proof}

\subsubsection{Probabilistic results}
\label{app:probabilistic-results}

We collect three probabilistic tools used in the proofs.
The first result is a stochastic Fubini theorem for stochastic
integrals with respect to continuous semimartingales. The version
stated by Revuz and Yor~\cite[Chap.~IV, Sec.~5, Ex.~(5.17),
pp.~175--176]{revuz2013continuous} is formulated for a bounded measure;
applying it to the Jordan decomposition of a finite signed measure and
using the linearity of Lebesgue and stochastic integrals yields the
following extension, which is the form used to justify the convolution
commutation in Lemma~\ref{lem:GL-commute}.

\begin{theorem}
\label{thm:stoch-fubini}
Let \((\Omega,\mathcal F,(\mathcal F(t))_{t\ge0},\mathbb P)\) satisfy
the usual conditions, let \(X=(X(t))_{0\le t\le T}\) be a continuous
semimartingale, let \((S,\mathcal S)\) be a measurable space, and let
\(\nu\) be a finite signed measure on \((S,\mathcal S)\). Assume that
\(H:S\times[0,T]\times\Omega\to\mathbb R\) is bounded and
\(\mathcal S\otimes\mathcal P\)-measurable, where \(\mathcal P\)
denotes the predictable sigma-field associated with
\((\mathcal F(t))_{0\le t\le T}\). Then there exists an
\(\mathcal S\otimes\mathcal P\)-measurable process
\(I:S\times[0,T]\times\Omega\to\mathbb R\) such that, up to
indistinguishability, for every \(s\in S\),
{\small
\[
I(s,t)=\int_0^t H(s,u)\,dX(u),
\qquad 0\le t\le T,
\]
}
and
{\small
\[
\int_S I(s,t)\,\nu(ds)
=
\int_0^t\Bigl(\int_S H(s,u)\,\nu(ds)\Bigr)dX(u),
\qquad 0\le t\le T.
\]
}
\end{theorem}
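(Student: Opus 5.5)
The proof reduces the signed-measure version to the bounded-measure version of Revuz and Yor by linearity. First, take the Jordan decomposition $\nu=\nu^+-\nu^-$, where $\nu^\pm$ are finite positive measures on $(S,\mathcal S)$ carried by disjoint sets. If $\nu^+$ or $\nu^-$ is the zero measure, the corresponding statement is trivial. Otherwise, apply the bounded-measure stochastic Fubini theorem of Revuz and Yor \cite[Chap.~IV, Ex.~(5.17)]{revuz2013continuous} separately to $\nu^+$ and to $\nu^-$, with the same bounded $\mathcal S\otimes\mathcal P$-measurable integrand $H$. This yields $\mathcal S\otimes\mathcal P$-measurable processes $I^+$ and $I^-$. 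Each is, for every $s$, indistinguishable from $\int_0^\cdot H(s,u)\,dX(u)$, and each satisfies the Fubini identity with respect to its own measure.

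The main point to check is that a single version $I$ serves both measures. The Revuz--Yor construction produces a jointly measurable version that does not depend on the measure. It is built from the integrand alone, as a limit of elementary integrands via a monotone class argument, so one may take $I:=I^+$. For every $s$, the process $I^-(s,\cdot)$ is indistinguishable from $I^+(s,\cdot)$. If one prefers not to rely on the internal details of that construction, apply the exercise to the probability measure $(\nu^++\nu^-)/|\nu|(S)$. The resulting $I$ is a jointly measurable version whose Fubini identity holds for $|\nu|$. Since $\nu^\pm\ll|\nu|$ with densities $\mathbf 1_{S^\pm}$, where $S^+$ and $S^-$ are the disjoint carriers, apply the same result to the bounded integrands $\mathbf 1_{S^\pm}(s)H(s,u)$. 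These give $\int I\,d\nu^\pm=\int_0^t\bigl(\int H\,d\nu^\pm\bigr)dX$, because $\int_0^t \mathbf 1_{S^\pm}(s)H(s,u)\,dX(u)=\mathbf 1_{S^\pm}(s)I(s,t)$ for each fixed $s$ by linearity of the stochastic integral.

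Next, subtract the two identities. The Lebesgue integral is linear in the measure, so $\int_S I\,d\nu=\int_S I\,d\nu^+-\int_S I\,d\nu^-$. The stochastic integral is linear in the integrand, so
\[
\int_0^t\Bigl(\int_S H\,d\nu^+\Bigr)dX-\int_0^t\Bigl(\int_S H\,d\nu^-\Bigr)dX
=\int_0^t\Bigl(\int_S H(s,u)\,\nu(ds)\Bigr)dX(u).
\]
The inner integrals $\int_S H(s,u)\,\nu^\pm(ds)$ are bounded by $\|H\|_\infty\,|\nu|(S)$. They are predictable by the measurable-section (Fubini) property of $\mathcal S\otimes\mathcal P$-measurable bounded functions, so all the stochastic integrals are well defined.

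The identities hold up to indistinguishability. Both sides are continuous in $t$: the left side is continuous by dominated convergence, provided the version $I$ is chosen with $\sup_t|I(s,t)|$ jointly measurable and $\nu$-integrable a.s. This makes "for all $t$ simultaneously" follow from equality at each fixed $t$. The only genuinely delicate step is this choice of a common jointly measurable version, handled as above through the $|\nu|$-normalized measure; the rest is linearity.
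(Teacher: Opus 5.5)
Your proposal is correct and follows the paper's route. The paper simply applies the bounded-measure stochastic Fubini theorem of Revuz and Yor to the two parts of the Jordan decomposition $\nu=\nu^+-\nu^-$ and subtracts, using linearity of the Lebesgue and stochastic integrals; it never raises the question of versions. Your concern about a common version is legitimate, but the $\mathbf 1_{S^\pm}$ detour tacitly relies on the same fact that settles it directly: for fixed $t$, the set where two jointly measurable versions differ is $\mathcal S\otimes\mathcal F$-measurable with $\mathbb P$-null $s$-sections, hence $|\nu|\otimes\mathbb P$-null by Tonelli, so their $\nu^\pm$-integrals agree almost surely and a single version serves both measures.
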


The second result is an \(L^2\)-valued BDG inequality that controls stochastic
integrals as random elements of \(L^2(A;\mathbb R^{d_2})\); it is a
standard consequence of infinite-dimensional stochastic integration theory,
see Gawarecki and Mandrekar~\cite[Lemma~3.1,
Eq.~(3.21)]{gawarecki2011stochastic}.

\begin{theorem}
\label{thm:L2-valued-BDG}
Let \(A\subset\mathbb R^{d_1}\) be measurable, let
\(W=(W_1,\ldots,W_q)\) be a \(q\)-dimensional Brownian motion, and let
\(\Phi:[0,T]\times\Omega\to L^2(A;\mathbb R^{d_2\times q})\) be
predictable, identified with the linear operator from \(\mathbb R^q\)
to \(L^2(A;\mathbb R^{d_2})\) given by
\(z\mapsto\sum_{k=1}^q z_k\,\Phi_{\cdot k}(t,\omega,\cdot)\), where
\(\Phi_{\cdot k}\) denotes the \(k\)-th column of \(\Phi\). Assume
that, for some \(p\ge2\),
{\small
\[
\mathbb E\biggl[
\Bigl(\int_0^T\!\!\int_A\|\Phi(t,x)\|_F^2\,dx\,dt\Bigr)^{p/2}
\biggr]<\infty.
\]
}
For \(0\le t\le T\), define the \(L^2(A;\mathbb R^{d_2})\)-valued
stochastic integral \(M(t):=\int_0^t\Phi(s)\,dW(s)\), characterized by
{\small
\[
\langle M(t),g\rangle_{2,A}
=
\sum_{k=1}^q
\int_0^t\langle\Phi_{\cdot k}(s,\cdot),g\rangle_{2,A}\,dW_k(s),
\qquad g\in L^2(A;\mathbb R^{d_2}).
\]
}
Then \(M=(M(t))_{0\le t\le T}\) is an
\(L^2(A;\mathbb R^{d_2})\)-valued continuous martingale, and
{\small
\[
\mathbb E\Bigl[\sup_{0\le t\le T}\|M(t)\|_{2,A}^p\Bigr]
\le
C_p\,
\mathbb E\biggl[
\Bigl(\int_0^T\!\!\int_A\|\Phi(t,x)\|_F^2\,dx\,dt\Bigr)^{p/2}
\biggr],
\]
}
where \(\|\cdot\|_F\) is the Frobenius norm and \(C_p<\infty\) depends
only on \(p\).
\end{theorem}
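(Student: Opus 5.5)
This is a standard Hilbert-space BDG inequality, and I would prove it by reducing it to the real-valued (or finite-dimensional) Burkholder--Davis--Gundy inequality. Set \(H:=L^2(A;\mathbb R^{d_2})\) and fix an orthonormal basis \((g_\ell)_{\ell\ge1}\) of \(H\), which exists because \(H\) is separable. For each \(\ell\), define the real continuous local martingale
\[
m_\ell(t):=\sum_{k=1}^q\int_0^t\langle\Phi_{\cdot k}(s,\cdot),g_\ell\rangle_{2,A}\,dW_k(s).
\]
The integrand is predictable because \(\Phi\) is predictable as an \(H\)-valued process and pairing with \(g_\ell\) is continuous. By Parseval and Cauchy--Schwarz, the integrand is square integrable in time, so the stochastic integral is well defined.

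The key object is the truncated vector \(M^{(n)}(t):=\sum_{\ell\le n}m_\ell(t)g_\ell\). It is an \(\mathbb R^n\)-valued continuous local martingale, and its quadratic variation \(\sum_{\ell\le n}\langle m_\ell\rangle_t\) is dominated by
\[
\sum_{\ell\ge1}\sum_k\int_0^t\langle\Phi_{\cdot k}(s),g_\ell\rangle^2ds=\int_0^t\int_A\|\Phi(s,x)\|_F^2\,dx\,ds=:V(t),
\]
again by Parseval, summing over \(k\) and using \(\sum_k|\Phi_{\cdot k}|^2=\|\Phi\|_F^2\). I would apply the BDG inequality for continuous \(\mathbb R^n\)-valued local martingales, which holds with a dimension-free constant. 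One way to get this is to apply the scalar BDG inequality to the real semimartingale \(|M^{(n)}|^2\) via Itô's formula. Alternatively, one can use the Euclidean-norm version, whose proof through Itô's formula for \(|x|^p\), \(p\ge2\), is dimension independent. This gives
\[
\mathbb E\Bigl[\sup_{t\le T}|M^{(n)}(t)|^p\Bigr]\le C_p\,\mathbb E\bigl[V(T)^{p/2}\bigr],
\]
uniformly in \(n\).

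Next, I would pass to the limit \(n\to\infty\). Applying the same estimate to the increments \(M^{(n')}-M^{(n)}\) bounds their supremum norm by \(C_p\,\mathbb E[(\sum_{n<\ell\le n'}\langle m_\ell\rangle_T)^{p/2}]\). This tends to \(0\) by dominated convergence, since the sum is dominated by \(V(T)\), which is \(L^{p/2}\)-integrable by assumption. Hence \(M^{(n)}\) is Cauchy in \(L^p(\Omega;C([0,T];H))\). Its limit \(M\) is a continuous \(H\)-valued process, and it satisfies the characterizing identity: for \(g=g_\ell\) this holds by construction, and for general \(g\) it follows by linearity and \(L^2\)-continuity of the stochastic integral in the integrand, since \(\sum_\ell\langle g,g_\ell\rangle\langle\Phi_{\cdot k},g_\ell\rangle=\langle\Phi_{\cdot k},g\rangle\). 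The martingale property is inherited in the limit because each \(M^{(n)}\) is a true martingale, being \(L^p\)-bounded by the BDG bound. The desired inequality then follows from the uniform bound by Fatou's lemma, or from convergence in \(L^p\).

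The main obstacle is the dimension-independence of the BDG constant. I would handle it by proving the Euclidean-norm BDG inequality directly: apply Itô's formula to \(|M^{(n)}|^p\), whose Hessian is bounded by \(p(p-1)|x|^{p-2}\) independently of \(n\), then use Doob's inequality and Hölder's inequality as in the scalar proof. Alternatively, one can simply cite the Hilbert-space BDG inequality of Gawarecki and Mandrekar, as the paper does.
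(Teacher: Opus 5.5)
Your proof is correct, but it follows a different route from the paper. The paper gives no proof of this statement. It cites Gawarecki and Mandrekar (Lemma~3.1, Eq.~(3.21)), identifying $\Phi(t)$ with a Hilbert--Schmidt operator from $\mathbb R^q$ (identity noise covariance) into $L^2(A;\mathbb R^{d_2})$, whose squared Hilbert--Schmidt norm is $\sum_k\|\Phi_{\cdot k}(t)\|_{2,A}^2=\int_A\|\Phi(t,x)\|_F^2\,dx$, and reads the bound off the general theory of Hilbert-space stochastic integration.

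You instead proceed in three steps:
\begin{enumerate}
\item You build $M$ by hand from the scalar martingales $m_\ell$.
\item You reduce to the Euclidean BDG inequality, whose constant does not depend on the dimension $n$.
\item You pass to the limit through a Cauchy argument in $L^p(\Omega;C([0,T];H))$, with dominated convergence applied to the tails $\sum_{\ell>n}\langle m_\ell\rangle_T\le V(T)$.
\end{enumerate}

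The citation buys brevity. It leaves to the reader two checks: that the Hilbert--Schmidt norm equals the Frobenius integral, and that the abstract stochastic integral coincides with the weakly characterized object actually used later for $M_{\boldsymbol\eta}$. Your route is self-contained and settles both checks explicitly, the first via Parseval and the second via the It\^o isometry. The isometry step uses $\mathbb E[V(T)]\le(\mathbb E[V(T)^{p/2}])^{2/p}<\infty$, which is where $p\ge2$ enters. Your route also isolates the one genuinely nontrivial point, the $n$-independence of the constant. Your It\^o--Doob--H\"older argument for $|x|^p$, with Hessian operator norm $p(p-1)|x|^{p-2}$ and a positive semidefinite $d\langle M^{(n)}\rangle$, handles this correctly. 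When you write it out, remember to localize before absorbing $\mathbb E[\sup_t|M^{(n)}(t)|^p]$ into the left-hand side.
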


The third result is a Talagrand-type inequality for empirical processes indexed by a countable
class of real-valued functionals on a Hilbert space.
\begingroup 
It follows from integrating the tail bound in the concentration
inequality of Klein and Rio~\cite{klein2005concentration}.
\endgroup

\begin{theorem}
\label{lem:talagrand-hilbert}
Let \((\mathcal H,\langle\cdot,\cdot\rangle_{\mathcal H},
\|\cdot\|_{\mathcal H})\) be a separable Hilbert space, let
\(X_1,\ldots,X_n\) be independent \(\mathcal H\)-valued random
variables, and let \(\mathcal F\) be an at most countable class of
measurable functions \(f:\mathcal H\to\mathbb R\). Define
\[
v_n(f)
:=
\frac1n\sum_{i=1}^n
\bigl(f(X_i)-\mathbb E[f(X_i)]\bigr),
\qquad f\in\mathcal F,
\]
and assume that there exist constants \(M,H,v>0\) such that
\[
\sup_{f\in\mathcal F}\|f\|_{\infty}\le M,\qquad
\mathbb E\Bigl[\sup_{f\in\mathcal F}|v_n(f)|\Bigr]\le H,\qquad
\sup_{f\in\mathcal F}\frac1n\sum_{i=1}^n
\operatorname{Var}(f(X_i))\le v.
\]
Then, for every \(\alpha>0\), setting
\(C_\alpha:=(\sqrt{1+\alpha}-1)\wedge1\), there exist universal
positive constants \(c_1,c_2,c_3\) such that
\[
\mathbb E\Bigl[
\Bigl(\sup_{f\in\mathcal F}|v_n(f)|^2-2(1+2\alpha)H^2\Bigr)_+
\Bigr]
\le
c_1\biggl[
\frac{v}{n}\exp\Bigl(-c_2\alpha\,\frac{nH^2}{v}\Bigr)
+
\frac{M^2}{n^2C_\alpha^2}
\exp\Bigl(-c_3C_\alpha\sqrt{\alpha}\,\frac{nH}{M}\Bigr)
\biggr].
\]
\end{theorem}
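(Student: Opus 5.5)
The plan is to read Theorem~\ref{lem:talagrand-hilbert} as a direct consequence of the Klein--Rio concentration inequality, obtained by integrating its tail bound. Write \(S:=\sup_{f\in\mathcal F}|v_n(f)|\). Klein and Rio~\cite{klein2005concentration} give, for countable classes of uniformly bounded functions of independent variables and with the stated \(M,H,v\), a tail bound of the form
\[
\mathbb P\bigl(S\ge H+\lambda\bigr)\le \exp\Bigl(-\frac{n\lambda^2}{2(v+2MH)+3M\lambda}\Bigr),\qquad \lambda>0,
\]
applied to \(\mathcal F\cup(-\mathcal F)\) so that the absolute value is covered. The functions here are defined on the Hilbert space, but this is immaterial: each \(f(X_i)\) is a real bounded random variable, and the \(X_i\) are independent.

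First, I will simplify the tail into a sum of a sub-Gaussian and a sub-exponential part. The denominator is bounded by \(2\max\{2v+4MH,\,3M\lambda\}\), which gives
\[
\mathbb P(S\ge H+\lambda)\le \exp\Bigl(-\frac{n\lambda^2}{4(2v+4MH)}\Bigr)+\exp\Bigl(-\frac{n\lambda}{6M}\Bigr).
\]
Next, I write
\[
\mathbb E\bigl[(S^2-2(1+2\alpha)H^2)_+\bigr]=\int_0^\infty \mathbb P\bigl(S^2\ge 2(1+2\alpha)H^2+u\bigr)\,du .
\]
On this event, \(S\ge \sqrt{2(1+2\alpha)H^2+u}\). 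I want to write this as \(H+\lambda(u)\) with \(\lambda(u)\) bounded below by both \(\sqrt{\alpha}\,H\)-type and \(\sqrt{u}\)-type quantities. The factor \(C_\alpha=(\sqrt{1+\alpha}-1)\wedge1\) enters exactly here: the inequality \(\sqrt{2(1+2\alpha)}\ge 1+\sqrt{1+\alpha}\cdots\) gives \(\lambda\ge C_\alpha H+c\sqrt u\). I will use \(\lambda^2\ge \alpha H^2+u/2\) for the Gaussian part and \(\lambda\ge C_\alpha(H+\sqrt u)\) for the exponential part.

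Then I handle the term \(MH\) in the Gaussian denominator. If \(MH\le v\), the denominator is of order \(v\). Otherwise, \(\exp(-cn\lambda^2/(MH))\) with \(\lambda\ge C_\alpha H\) is bounded by \(\exp(-cnC_\alpha\lambda/M)\), which goes into the exponential part. Integrating \(\int_0^\infty e^{-c n(\alpha H^2+u)/v}du=\frac{v}{cn}e^{-c\alpha nH^2/v}\) yields the first term. The exponential part integrates as \(\int_0^\infty e^{-cnC_\alpha(\sqrt\alpha H+\sqrt u)/M}du\). Substituting \(u=s^2\) gives
\[
e^{-cC_\alpha\sqrt\alpha\,nH/M}\cdot 2\int_0^\infty s e^{-cnC_\alpha s/M}ds=\frac{2M^2}{c^2n^2C_\alpha^2}e^{-cC_\alpha\sqrt\alpha\,nH/M},
\]
which is the second term.

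The main obstacle is the bookkeeping in the \(\lambda(u)\) lower bounds, specifically getting the factor \(\sqrt\alpha\) in the exponent together with \(C_\alpha\). I expect this to need a case split between \(\alpha\le 3\) and \(\alpha>3\) (where \(C_\alpha=1\)). Constants are allowed to be universal and are not tracked, so cruder bounds suffice as long as the structure \(v/n\cdot e^{-\alpha nH^2/v}\) and \(M^2/(n^2C_\alpha^2)\cdot e^{-C_\alpha\sqrt\alpha nH/M}\) survives. Countability of \(\mathcal F\) ensures \(S\) is measurable.
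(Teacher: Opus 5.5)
Your proposal is correct and follows the route the paper indicates: the paper only says the bound follows by integrating the Klein--Rio tail inequality, and your split of that tail into sub-Gaussian and sub-exponential parts, followed by integration in $u$, supplies exactly those details. Two slips are harmless. First, after centering the summands are bounded by $2M$, so the tail is $\exp\bigl(-n\lambda^2/(2v+8MH+6M\lambda)\bigr)$; this only changes universal constants. Second, the factor $\sqrt\alpha$ follows directly from your bound $\lambda^2\ge\alpha H^2+u/2$, i.e.\ $\lambda\ge\sqrt{\alpha/2}\,H+\sqrt u/2$, so no case split on $\alpha$ is needed and $C_\alpha$ enters only through $\lambda\ge C_\alpha H$ in the case $MH>v$.
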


\subsubsection{Concentration for Hilbert-valued empirical means}
\label{app:hilbert-valued-concentration}

\begingroup 
Establishing the stochastic bound required for the adaptive analysis involves more than a pointwise variance calculation. One needs to control simultaneously, over a finite index set, a family of Hilbert-space-valued empirical means. We first present a general deviation bound used below.

Let \(\mathbb H\) be a separable real Hilbert space, let \(\mathcal S_0\) be a countable symmetric subset dense in its unit sphere, that is, \(\psi\in\mathcal S_0\) implies \(-\psi\in\mathcal S_0\). For every \(N\ge2\), let \(\Lambda_N\) be a finite index set. For each \(\lambda\in\Lambda_N\), let \(Z_\lambda\) be a measurable \(\mathbb H\)-valued random element, and let \(Z_\lambda^{(1)},\ldots,Z_\lambda^{(N)}\) be independent copies of \(Z_\lambda\). Set
{\small
\[
U_{N,\lambda}:=\frac1N\sum_{i=1}^N
\bigl(Z_\lambda^{(i)}-\mathbb E[Z_\lambda]\bigr).
\]
}
\begin{proposition}
\label{prop:hilbert-uniform-deviation}
Assume that (i) for some \(r_0>2\), \(\mathrm C_{r_0}<\infty\), and positive numbers \(s_\lambda,v_\lambda\),
{\small
\begin{equation}
\label{eq:abstract-H-moments}
\mathbb E[\|Z_\lambda\|_{\mathbb H}^2]\le s_\lambda^2,
\qquad
\sup_{\psi\in\mathcal S_0}\mathbb E[\langle Z_\lambda,\psi\rangle_{\mathbb H}^2]\le v_\lambda,
\qquad
\mathbb E[\|Z_\lambda\|_{\mathbb H}^{r_0}]\le \mathrm C_{r_0}s_\lambda^{r_0}.
\end{equation}
}
 (ii) for some positive sequence \(L_N\) and a constant \(\mathrm C_V(\mu)<\infty\)independent of \(N\),
 {\small
\begin{equation}
\label{eq:abstract-H-complexity}
\sum_{\lambda\in\Lambda_N}s_\lambda^2\le L_N,
\qquad
\sum_{\lambda\in\Lambda_N}v_\lambda
\exp\!\left(-\mu\frac{s_\lambda^2}{v_\lambda}\right)
\le \mathrm C_V(\mu),\quad \mu>0.
\end{equation}
}
Then, for every \(\kappa\ge12\), there exists a finite constant \(\mathrm C\), independent of \(N\), such that
{\small
\begin{equation}
\label{eq:abstract-H-conclusion}
\mathbb E\!\left[\sup_{\lambda\in\Lambda_N}
\left(\|U_{N,\lambda}\|_{\mathbb H}^2-\frac{\kappa s_\lambda^2}{N}\right)_+\right]
\le \mathrm C\!\left\{\frac1N
+\frac{L_N(\log N)^{r_0-2}}{N^{(r_0-2)/2}}\right\}.
\end{equation}
}
If, in addition, \(\sup_{\lambda\in\Lambda_N}v_\lambda\le v_0<\infty\), the second condition in \emph{(ii)} may be replaced by
\(
\sum_{\lambda\in\Lambda_N}e^{-\mu s_\lambda^2}
\le\widetilde{\mathrm C}_V(\mu)
\)
for every \(\mu>0\), with \(\widetilde{\mathrm C}_V(\mu)\) independent of \(N\).
\end{proposition}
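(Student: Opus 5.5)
Since the index set is finite, the plan is to bound the expectation of the supremum by a sum over \(\lambda\in\Lambda_N\), after splitting each \(Z_\lambda\) into a bounded part and a tail. For a threshold \(M_{N,\lambda}:=c\,s_\lambda\sqrt{N}/\log N\) (constant \(c\) tuned later), write
\[
Z_\lambda=Z_\lambda\mathbf 1_{\{\|Z_\lambda\|_{\mathbb H}\le M_{N,\lambda}\}}+Z_\lambda\mathbf 1_{\{\|Z_\lambda\|_{\mathbb H}>M_{N,\lambda}\}}=:Z_\lambda^{b}+Z_\lambda^{u}.
\]
This gives \(U_{N,\lambda}=U^b_{N,\lambda}+U^u_{N,\lambda}\) after centering each part. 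Using \(\|u+v\|^2\le(1+\epsilon)\|u\|^2+(1+\epsilon^{-1})\|v\|^2\) with \(\epsilon=1/2\), we get
\[
\Bigl(\|U_{N,\lambda}\|^2-\tfrac{\kappa s_\lambda^2}{N}\Bigr)_+\le\tfrac32\Bigl(\|U^b_{N,\lambda}\|^2-\tfrac{2\kappa s_\lambda^2}{3N}\Bigr)_++3\|U^u_{N,\lambda}\|^2 .
\]
Both terms on the right are then summed over \(\lambda\).

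\emph{Unbounded part.} Independence gives \(\mathbb E\|U^u_{N,\lambda}\|^2\le N^{-1}\mathbb E[\|Z_\lambda\|^2\mathbf 1_{\{\|Z_\lambda\|>M_{N,\lambda}\}}]\). Markov's inequality with the \(r_0\)-moment bound in \eqref{eq:abstract-H-moments} bounds this by
\[
N^{-1}\mathrm C_{r_0}s_\lambda^{r_0}M_{N,\lambda}^{2-r_0}\lesssim s_\lambda^2(\log N)^{r_0-2}N^{-1-(r_0-2)/2}.
\]
Summing with \(\sum_\lambda s_\lambda^2\le L_N\) gives a contribution of at most the second term of \eqref{eq:abstract-H-conclusion}, even with an extra factor \(N^{-1}\) to spare.

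\emph{Bounded part.} Here I would apply Theorem~\ref{lem:talagrand-hilbert} to \(X_i=Z^{b,(i)}_\lambda\) and the countable class \(\mathcal F=\{z\mapsto\langle z,\psi\rangle:\psi\in\mathcal S_0\}\). Because \(\mathcal S_0\) is dense in the unit sphere and symmetric, \(\sup_{\mathcal F}|v_n(f)|=\|U^b_{N,\lambda}\|\). The truncation gives the uniform bound \(M=M_{N,\lambda}\) for every functional at once, which is the point of truncating in the Hilbert norm. The remaining Talagrand parameters are obtained as follows:
\[
H=\Bigl(\mathbb E\|U^b_{N,\lambda}\|^2\Bigr)^{1/2}\le s_\lambda/\sqrt N,
\qquad
v=v_\lambda,
\]
where the bound on \(H\) uses Jensen and the bound on \(v\) uses that truncation does not increase second moments. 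Choosing \(\alpha\) so that \(2(1+2\alpha)\le 2\kappa/3\) (possible since \(\kappa\ge12\), e.g.\ \(\alpha=1\)), the theorem yields
\[
\mathbb E\Bigl(\|U^b_{N,\lambda}\|^2-\tfrac{2\kappa s_\lambda^2}{3N}\Bigr)_+\lesssim\frac{v_\lambda}{N}e^{-c_2 s_\lambda^2/v_\lambda}+\frac{M_{N,\lambda}^2}{N^2}e^{-c\sqrt N s_\lambda/M_{N,\lambda}} .
\]
There is a monotonicity issue: Talagrand is stated with \(H\) an upper bound, and the exponent is only favorable if \(H\) is taken equal to \(s_\lambda/\sqrt N\). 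Since a larger \(H\) is admissible, this is fine.

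Summing the first term over \(\lambda\) gives \(\mathrm C_V(c_2)/N\) by \eqref{eq:abstract-H-complexity}. For the second term, \(\sqrt N s_\lambda/M_{N,\lambda}=\log N/c\), so it equals \(c^2s_\lambda^2N^{-1}(\log N)^{-2}N^{-c_3'/c}\). Choosing \(c\) small makes \(N^{-c_3'/c}\le N^{-(r_0-2)/2}\), so after summing this is again dominated by the second term of \eqref{eq:abstract-H-conclusion}. The main obstacle is this joint calibration of \(M_{N,\lambda}\): it must be small enough that the Talagrand boundedness term is negligible, yet large enough that the tail term is controlled by the \(r_0\)-moment. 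The \(\log N\) factor in the threshold is exactly what reconciles the two. A secondary subtlety is that the centering of \(Z^b\) differs from that of \(Z\); this is absorbed into \(U^u\).

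For the variant with \(\sup_\lambda v_\lambda\le v_0\), the same argument applies. Since \(v_\lambda e^{-\mu s_\lambda^2/v_\lambda}\le v_0e^{-(\mu/v_0)s_\lambda^2}\), the assumed \(\widetilde{\mathrm C}_V\) bound implies the second condition in (ii).
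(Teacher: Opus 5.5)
Your proposal is correct and follows essentially the same route as the paper: truncation in the Hilbert norm at level $M_{N,\lambda}\asymp s_\lambda\sqrt N/\log N$, the Klein--Rio/Talagrand bound applied to the functionals $\langle\cdot,\psi\rangle_{\mathbb H}$, $\psi\in\mathcal S_0$, with $M=M_{N,\lambda}$, $H=s_\lambda/\sqrt N$, $v=v_\lambda$, $\alpha=1$, and the tail term controlled through the $r_0$-th moment. The differences are cosmetic. The paper splits with constant $2$ rather than $3/2$ and $3$. It bounds the remainder crudely by $\mathbb E\|R_{N,\lambda}\|_{\mathbb H}^2\le 4\,\mathbb E[B_\lambda^2]$, whereas your independence argument keeps an extra factor $N^{-1}$. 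Finally, it formally uses $\langle z,\psi\rangle_{\mathbb H}\mathbf 1_{\{\|z\|_{\mathbb H}\le M_{N,\lambda}\}}$, so that the functionals are bounded on all of $\mathbb H$, which is what your ``uniform bound for every functional'' implicitly requires.
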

\endgroup

\begin{proof}

Since \(\mathbb H\) is separable, its unit sphere is separable. Hence, for every \(u\in\mathbb H\),
{\small
\[
\|u\|_{\mathbb H}
=
\sup_{\psi\in\mathcal S_0}
|\langle u,\psi\rangle_{\mathbb H}|.
\]
}
The countability of \(\mathcal S_0\) and the finiteness of \(\Lambda_N\) ensure that all the suprema below are measurable. Fix \(\lambda\in\Lambda_N\). In order to control
\(\|U_{N,\lambda}\|_{\mathbb H}\), choose a cutoff
\(M_{N,\lambda}>0\), to be specified below, and set
{\small
\[
\widetilde Z_\lambda
:=
Z_\lambda\mathbf1_{\{\|Z_\lambda\|_{\mathbb H}\le M_{N,\lambda}\}},
\qquad
\widetilde U_{N,\lambda}
:=
\frac1N\sum_{i=1}^N
\bigl(\widetilde Z_\lambda^{(i)}-\mathbb E[\widetilde Z_\lambda]\bigr).
\]
}
We apply Theorem~\ref{lem:talagrand-hilbert} to the independent
random elements
\(
Z_\lambda^{(1)},\ldots,Z_\lambda^{(N)}
\)
and to the countable class of measurable functionals
{
\[
\mathcal F_{N,\lambda}
:=
\left\{
f_{\psi}(z)
=
\langle z,\psi\rangle_{\mathbb H}
\mathbf 1_{\{\|z\|_{\mathbb H}\le M_{N,\lambda}\}}
:
\psi\in\mathcal S_0
\right\}.
\]
}
For every \(\psi\in\mathcal S_0\),
\(
\frac1N\sum_{i=1}^N
\left(
f_{\psi}(Z_\lambda^{(i)})
-
\mathbb E\!\left[
f_{\psi}(Z_\lambda)
\right]
\right)
=
\left\langle
\widetilde U_{N,\lambda},\psi
\right\rangle_{\mathbb H}.
\)
Since \(\mathcal S_0\) is symmetric and dense in the unit sphere of \(\mathbb H\),
it follows that
{\small
\[
\sup_{f\in\mathcal F_{N,\lambda}}
\frac1N\sum_{i=1}^N
\left(
f(Z_\lambda^{(i)})-\mathbb E[f(Z_\lambda)]
\right)
=
\sup_{\psi\in\mathcal S_0}
\left\langle
\widetilde U_{N,\lambda},\psi
\right\rangle_{\mathbb H}
=
\sup_{\psi\in\mathcal S_0}
\left|
\left\langle
\widetilde U_{N,\lambda},\psi
\right\rangle_{\mathbb H}
\right|
=
\|\widetilde U_{N,\lambda}\|_{\mathbb H}.
\]
}
We now verify the three parameters appearing in
Theorem~\ref{lem:talagrand-hilbert}. First, since
\(\|\psi\|_{\mathbb H}=1\), the Cauchy--Schwarz inequality gives
\[
\left|f_{\psi}(z)\right|
\le
\|z\|_{\mathbb H}
\mathbf 1_{\{\|z\|_{\mathbb H}\le M_{N,\lambda}\}}
\le M_{N,\lambda}.
\]
Next, by 
the Cauchy--Schwarz inequality, and independence,
{\small
\begin{align*}
\mathbb E\!\left[
\sup_{f\in\mathcal F_{N,\lambda}}
\frac1N\sum_{i=1}^N
\left(
f(Z_\lambda^{(i)})-\mathbb E[f(Z_\lambda)]
\right)
\right]
&=
\mathbb E\!\left[
\|\widetilde U_{N,\lambda}\|_{\mathbb H}
\right] \le
\left(
\mathbb E\!\left[
\|\widetilde U_{N,\lambda}\|_{\mathbb H}^2
\right]
\right)^{1/2} = \left(
\frac1N
\mathbb E\!\left[
\left\|
\widetilde Z_\lambda
-
\mathbb E[\widetilde Z_\lambda]
\right\|_{\mathbb H}^2
\right]
\right)^{1/2}  \\& \le  \left( \frac1N
\mathbb E\!\left[
\|\widetilde Z_\lambda\|_{\mathbb H}^2
\right]
\right)^{1/2} \le  \left( \frac1N
\mathbb E\!\left[
\|Z_\lambda\|_{\mathbb H}^2
\right] 
\right)^{1/2} \le  \frac{s_\lambda}{\sqrt N} := H_{N,\lambda},
\end{align*}
}
where the last inequality follows from
\eqref{eq:abstract-H-moments}. Finally, for every
\(f_{\psi}\in\mathcal F_{N,\lambda}\),
\begin{align*}
\operatorname{Var}\!\left(
f_{\psi}(Z_\lambda)
\right)
&\le
\mathbb E\!\left[
\langle Z_\lambda,\psi\rangle_{\mathbb H}^2
\mathbf 1_{\{\|Z_\lambda\|_{\mathbb H}
\le M_{N,\lambda}\}}
\right] \le
\mathbb E\!\left[
\langle Z_\lambda,\psi\rangle_{\mathbb H}^2
\right]
\le v_\lambda,
\end{align*}
again by \eqref{eq:abstract-H-moments}.
Consequently, Theorem~\ref{lem:talagrand-hilbert} applies with
\(
M=M_{N,\lambda},
\,
H=H_{N,\lambda}=\frac{s_\lambda}{\sqrt N},
\,
v=v_\lambda.
\)
Taking \(\alpha=1\) in Theorem~\ref{lem:talagrand-hilbert} gives
{\small
\begin{align}
&\mathbb E\!\left[
\left(
\|\widetilde U_{N,\lambda}\|_{\mathbb H}^2
-6H_{N,\lambda}^2
\right)_+\right]
\le
\mathrm C\left[
\frac{v_\lambda}{N}
\exp\!\left(-\mathrm c
\frac{NH_{N,\lambda}^2}{v_\lambda}\right)
+
\frac{M_{N,\lambda}^2}{N^2}
\exp\!\left(-\mathrm c
\frac{NH_{N,\lambda}}{M_{N,\lambda}}\right)
\right].
\label{eq:abstract-talagrand-proof}
\end{align}
}
Choose
{\small
\[
M_{N,\lambda}
:=
\frac{\tau NH_{N,\lambda}}{\log N}
=
\frac{\tau\sqrt N\,s_\lambda}{\log N},
\]
}
where \(\tau=\tau(r_0)>0\) is sufficiently small. Then
\(NH_{N,\lambda}/M_{N,\lambda}=\log N/\tau\), and \(\tau\) can be chosen so that the second exponential in \eqref{eq:abstract-talagrand-proof} is bounded by \(N^{-(r_0-2)/2}\). Since
\(NH_{N,\lambda}^2=s_\lambda^2\), we obtain
{\small
\begin{equation}
\label{eq:abstract-truncated-bound}
\mathbb E\!\left[
\left(
\|\widetilde U_{N,\lambda}\|_{\mathbb H}^2
-\frac{6s_\lambda^2}{N}
\right)_+\right]
\le
\mathrm C\left[
\frac{v_\lambda}{N}
\exp\!\left(-\mathrm c\frac{s_\lambda^2}{v_\lambda}\right)
+
\frac{s_\lambda^2}{N^{r_0/2}(\log N)^2}
\right].
\end{equation}
}
It remains to control the truncation remainder. Write
\begin{align}
    &R_{N,\lambda}:=U_{N,\lambda}-\widetilde U_{N,\lambda},
\qquad
B_\lambda
:=
\|Z_\lambda\|_{\mathbb H}
\mathbf1_{\{\|Z_\lambda\|_{\mathbb H}>M_{N,\lambda}\}}.
\\& \|R_{N,\lambda}\|_{\mathbb H}
\le
\frac1N\sum_{i=1}^N B_\lambda^{(i)}+\mathbb E[B_\lambda],
\qquad
\mathbb E[\|R_{N,\lambda}\|_{\mathbb H}^2]
\le4\mathbb E[B_\lambda^2].
\end{align}
By \eqref{eq:abstract-H-moments},
\begin{align*}
\mathbb E[B_\lambda^2]
&=
\mathbb E\!\left[
\|Z_\lambda\|_{\mathbb H}^2
\mathbf1_{\{\|Z_\lambda\|_{\mathbb H}>M_{N,\lambda}\}}
\right]\le
M_{N,\lambda}^{-(r_0-2)}
\mathbb E[\|Z_\lambda\|_{\mathbb H}^{r_0}]
\le
\mathrm C(\log N)^{r_0-2}
N^{-(r_0-2)/2}s_\lambda^2.
\end{align*}
Moreover, for every \(\kappa\ge12\),
\begin{align*}
\left(
\|U_{N,\lambda}\|_{\mathbb H}^2
-\frac{\kappa s_\lambda^2}{N}
\right)_+
&\le
2\left(
\|\widetilde U_{N,\lambda}\|_{\mathbb H}^2
-\frac{\kappa s_\lambda^2}{2N}
\right)_+
+2\|R_{N,\lambda}\|_{\mathbb H}^2\le
2\left(
\|\widetilde U_{N,\lambda}\|_{\mathbb H}^2
-\frac{6s_\lambda^2}{N}
\right)_+
+2\|R_{N,\lambda}\|_{\mathbb H}^2.
\end{align*}
Taking the supremum over \(\lambda\in\Lambda_N\), using
\((\sup_\lambda a_\lambda)_+\le\sum_\lambda(a_\lambda)_+\), and applying \eqref{eq:abstract-truncated-bound} and \eqref{eq:abstract-H-complexity}, we get
{\small
\begin{align*}
&\mathbb E\!\left[
\sup_{\lambda\in\Lambda_N}
\left(
\|U_{N,\lambda}\|_{\mathbb H}^2
-\frac{\kappa s_\lambda^2}{N}
\right)_+\right]\quad\le
\mathrm C\left[
\frac1N
+
\frac{1}{N^{r_0/2}(\log N)^2}
\sum_{\lambda\in\Lambda_N}s_\lambda^2
+
(\log N)^{r_0-2}N^{-(r_0-2)/2}
\sum_{\lambda\in\Lambda_N}s_\lambda^2
\right].
\end{align*}
}
Since, for every \(N\ge2\),
\[
\frac{1}{N^{r_0/2}(\log N)^2}
\le
\mathrm C_{r_0}\frac{(\log N)^{r_0-2}}{N^{(r_0-2)/2}},
\]
the second term is absorbed by the third one, which gives
\eqref{eq:abstract-H-conclusion}.

Finally, if \(v_\lambda\le v_0\) for every \(\lambda\), then
\(
v_\lambda
\exp\!\left(-\mu\frac{s_\lambda^2}{v_\lambda}\right)
\le
v_0\exp\!\left(-\frac{\mu}{v_0}s_\lambda^2\right).
\)
Thus the simpler condition stated in the proposition implies the second condition in \emph{(ii)}. This completes the proof.
\end{proof}

\subsection{Proof of Lemma~\ref{lem:GL-commute}}
\label{subsec:pf:GL-commute}
\begingroup Fix \(x\in\mathbb R^d\) and
\(j\in\{1,\ldots,d\}\).\endgroup{} By
Theorem~\ref{thm:semimartingale-identity},
{\small
\[
\big(\widehat{bf}_{N,\boldsymbol h,\boldsymbol\eta}\big)_j(x)
=
\frac{1}{N(T-t_0)}
\sum_{i=1}^N
\int_{\mathbb R^d}
K_{\boldsymbol\eta}(x-y)
\left[
\int_{t_0}^T
\delta^{(i)}(t)
K_{\boldsymbol h}(X^{(i)}(t)-y)\,dX_j^{(i)}(t)
\right]dy.
\]
}
To exchange the \(y\)-integral and the stochastic integral, we apply
Theorem~\ref{thm:stoch-fubini} to the continuous semimartingale
\(X_j^{(i)}\), the finite signed measure
\(\nu_x(dy):=K_{\boldsymbol\eta}(x-y)\,dy\), which satisfies
\(|\nu_x|(\mathbb R^d)\le\|K\|_1<\infty\), and the integrand
\(H_x^{(i)}(y,t,\omega)
:=\delta^{(i)}(t,\omega)K_{\boldsymbol h}(X^{(i)}(t,\omega)-y)\),
which is bounded by \(\|K_{\boldsymbol h}\|_\infty<\infty\) by
Assumption~\ref{ass:kernel}(i). This gives
{\small
\[
\big(\widehat{bf}_{N,\boldsymbol h,\boldsymbol\eta}\big)_j(x)
=
\frac{1}{N(T-t_0)}
\sum_{i=1}^N
\int_{t_0}^T
\delta^{(i)}(t)
\left[
\int_{\mathbb R^d}
K_{\boldsymbol\eta}(x-y)K_{\boldsymbol h}(X^{(i)}(t)-y)\,dy
\right]dX_j^{(i)}(t),
\]
}
and since \(K_{\boldsymbol\eta}\) is even (\(K\) being even), the
change of variables \(u=y-x\) identifies the inner integral as
\((K_{\boldsymbol\eta}*K_{\boldsymbol h})(X^{(i)}(t)-x)\). This proves
the representation of \(\widehat{bf}_{N,\boldsymbol h,\boldsymbol\eta}\)
stated in the lemma; since the resulting expression is, by
commutativity of the convolution, symmetric in
\((\boldsymbol h,\boldsymbol\eta)\), it also gives
\(\widehat{bf}_{N,\boldsymbol h,\boldsymbol\eta}
=\widehat{bf}_{N,\boldsymbol\eta,\boldsymbol h}\).
\begingroup 
For the denominator, the same change of variables and evenness of
\(K\), now with the ordinary Fubini theorem in place of
Theorem~\ref{thm:stoch-fubini}, yield the representation of
\(
\widehat f_{N,\boldsymbol h,\boldsymbol\eta}
\)
stated in the lemma. Commutativity of the convolution again yields
\(\widehat f_{N,\boldsymbol h,\boldsymbol\eta}
=\widehat f_{N,\boldsymbol\eta,\boldsymbol h}\).
\endgroup
\qed
\subsection{Proof of Lemma~\ref{lem:GL-deviation-verification}}
\label{app:GL-deviation-proof}

Let \(\mathbb H=L^2(\mathbb R^d;\mathbb R^d)\), and let
\(\mathcal S_0\subset C_c^\infty(\mathbb R^d;\mathbb R^d)\) be a countable symmetric subset dense in its unit sphere. For \(\boldsymbol\eta
\in\mathcal H_N\), define
{\small
\[
Z_{\boldsymbol\eta}(x)
:=
\frac1{T-t_0}
\int_{t_0}^T
\delta(t)K_{\boldsymbol\eta}(Y(t)-x)\,dY(t),
\qquad
\widehat{bf}_{N,\boldsymbol\eta}
=
\frac1N\sum_{i=1}^N Z_{\boldsymbol\eta}^{(i)}.
\]
}
By Theorem~\ref{thm:semimartingale-identity}(ii),
\(Z_{\boldsymbol\eta}=D_{\boldsymbol\eta}+M_{\boldsymbol\eta}\), where
{\small
\begin{align}
D_{\boldsymbol\eta}(x)
:=
\frac1{T-t_0}\int_{t_0}^T
\delta(t)K_{\boldsymbol\eta}(X(t)-x)b(X(t))dt,\qquad
M_{\boldsymbol\eta}(x)
:=
\frac1{T-t_0}\int_{t_0}^T
\delta(t)K_{\boldsymbol\eta}(X(t)-x)\Sigma(X(t))dW(t).
\label{eq:verification-DM}
\end{align}
}
The first term is a measurable \(\mathbb H\)-valued random element, since
\(
\|K_{\boldsymbol\eta}(X(t)-\cdot)b(X(t))\|_2
=
\|K_{\boldsymbol\eta}\|_2|b(X(t))|
\)
and \(b(X(t))\) has finite moments on \([0,T]\). For the martingale term, define
{\small
\[
\Phi_{\boldsymbol\eta}(t,x)
:=
\frac{\mathbf1_{[t_0,T]}(t)}{T-t_0}
\delta(t)K_{\boldsymbol\eta}(X(t)-x)\Sigma(X(t)).
\]
}
The process \(t\mapsto\Phi_{\boldsymbol\eta}(t,\cdot)\) is predictable with values in
\(L^2(\mathbb R^d;\mathbb R^{d\times q})\), and
\begin{align*}
&\mathbb E\!\left[
\int_0^T\int_{\mathbb R^d}
\|\Phi_{\boldsymbol\eta}(t,x)\|_F^2\,dx\,dt
\right]=
\frac{\|K_{\boldsymbol\eta}\|_2^2}{(T-t_0)^2}
\int_{t_0}^T
\mathbb E\!\left[
\delta(t)\operatorname{Tr}(\Sigma\Sigma^\top)(X(t))
\right]dt
<\infty.
\end{align*}
Thus Theorem~\ref{thm:L2-valued-BDG} defines
\(M_{\boldsymbol\eta}\) as an \(\mathbb H\)-valued random element. We claim that there exist a finite constant \(\mathrm C_w\) and, for every \(r\ge2\), a finite constant \(\mathrm C_r\), independent of \(N\) and \(\boldsymbol\eta\), such that
\begin{align}
\sup_{\psi\in\mathcal S_0}
\mathbb E\!\left[
\langle Z_{\boldsymbol\eta},\psi\rangle_2^2
\right]
&\le \mathrm C_w,
\label{eq:verification-weak-bf}\\
\mathbb E\!\left[\|Z_{\boldsymbol\eta}\|_2^2\right]
&\le
\frac{\mathrm C_{bf}}{\prod_{j=1}^d\eta_j},
\qquad
\mathbb E\!\left[\|Z_{\boldsymbol\eta}\|_2^r\right]
\le
\mathrm C_r\left(
\frac{\mathrm C_{bf}}{\prod_{j=1}^d\eta_j}
\right)^{r/2}.
\label{eq:verification-strong-bf}
\end{align}

We first prove \eqref{eq:verification-weak-bf}. Let
\(\psi\in\mathcal S_0\), and set
\[
\phi_{\boldsymbol\eta,j}(y)
:=
\int_{\mathbb R^d}
K_{\boldsymbol\eta}(y-x)\psi_j(x)\,dx,
\qquad
\phi_{\boldsymbol\eta}(y)
:=
\bigl(\phi_{\boldsymbol\eta,1}(y),\ldots,
\phi_{\boldsymbol\eta,d}(y)\bigr)^\top.
\]
For each coordinate \(j\), the measure \(\psi_j(x)\,dx\) is finite, since \(\psi_j\) is bounded with compact support. Moreover, by Assumption~\ref{ass:kernel}(i), the process
\((x,t,\omega)\mapsto
\delta(t,\omega)K_{\boldsymbol\eta}(X(t,\omega)-x)\)
is bounded. Hence Theorem~\ref{thm:stoch-fubini}, applied componentwise to the semimartingales \(X_j\), gives
\begin{align*}
\langle Z_{\boldsymbol\eta},\psi\rangle_2
&=
\frac1{T-t_0}\sum_{j=1}^d
\int_{t_0}^T
\delta(t)\phi_{\boldsymbol\eta,j}(X(t))\,dX_j(t) \\&=
\frac1{T-t_0}\int_{t_0}^T
\delta(t)\langle b(X(t)),\phi_{\boldsymbol\eta}(X(t))\rangle\,dt +
\frac1{T-t_0}\int_{t_0}^T
\delta(t)\phi_{\boldsymbol\eta}(X(t))^\top
\Sigma(X(t))\,dW(t).
\end{align*}
Young's convolution inequality gives
\begin{equation}
\label{eq:verification-test-L2}
\|\phi_{\boldsymbol\eta}\|_2
\le
\|K_{\boldsymbol\eta}\|_1\|\psi\|_2
=
\|K\|_1.
\end{equation}
By Proposition~\ref{ass:gaussian-density}, the linear growth of \(b\), and \(t_0>0\),
\begin{equation}
\label{eq:verification-density-constants}
\mathrm P_0
:=
\sup_{t\in[t_0,T]}\|p_t(x_0,\cdot)\|_\infty
<\infty,
\qquad
\mathrm B_0
:=
\sup_{t\in[t_0,T]}\sup_{x\in\mathbb R^d}
|b(x)|^2p_t(x_0,x)
<\infty.
\end{equation}
Using \((a+b)^2\le2a^2+2b^2\), Jensen's inequality in time, the It\^o isometry, and
\(
|\Sigma(x)^\top z|^2\le\Lambda_\Sigma|z|^2
\) for some constant $\Lambda_\Sigma$, we obtain
\begin{align*}
\mathbb E\!\left[
\langle Z_{\boldsymbol\eta},\psi\rangle_2^2
\right]
&\le
\frac{2}{T-t_0}\int_{t_0}^T
\mathbb E\!\left[
|b(X(t))|^2|\phi_{\boldsymbol\eta}(X(t))|^2
\right]dt+
\frac{2\Lambda_\Sigma}{(T-t_0)^2}\int_{t_0}^T
\mathbb E\!\left[
|\phi_{\boldsymbol\eta}(X(t))|^2
\right]dt\\
&\le
2\mathrm B_0\|\phi_{\boldsymbol\eta}\|_2^2
+
\frac{2\Lambda_\Sigma\mathrm P_0}{T-t_0}
\|\phi_{\boldsymbol\eta}\|_2^2
\le
\mathrm C_w,
\end{align*}
which proves \eqref{eq:verification-weak-bf}.

We now prove \eqref{eq:verification-strong-bf}. For the drift part in \eqref{eq:verification-DM}, Minkowski's integral inequality, Jensen's inequality in time, and
\eqref{eq:kernel-L2-norm}
give, for every \(r\ge2\),
\begin{align}
\mathbb E\!\left[\|D_{\boldsymbol\eta}\|_2^r\right]
&\le
\mathbb E\!\left[
\left(
\frac1{T-t_0}\int_{t_0}^T
\delta(t)\|K_{\boldsymbol\eta}(X(t)-\cdot)b(X(t))\|_2\,dt
\right)^r\right]
\nonumber\\
&\le
\|K_{\boldsymbol\eta}\|_2^r
\frac1{T-t_0}\int_{t_0}^T
\mathbb E\!\left[|b(X(t))|^r\right]dt
\le
\mathrm C_r
\left(\prod_{j=1}^d\eta_j\right)^{-r/2}.
\label{eq:verification-drift-strong}
\end{align}
For the martingale part, Theorem~\ref{thm:L2-valued-BDG}, followed by Fubini's theorem and translation invariance of the \(L^2\)-norm, yields
\begin{align}
\mathbb E\!\left[\|M_{\boldsymbol\eta}\|_2^r\right]
&\le
\mathrm C_r\mathbb E\!\left[
\left(
\int_{t_0}^T\int_{\mathbb R^d}
\frac{\delta(t)}{(T-t_0)^2}
K_{\boldsymbol\eta}(X(t)-x)^2
\operatorname{Tr}(\Sigma\Sigma^\top)(X(t))\,dx\,dt
\right)^{r/2}\right]
\nonumber\\
&\le
\mathrm C_r
\left(\prod_{j=1}^d\eta_j\right)^{-r/2}
\frac1{T-t_0}\int_{t_0}^T
\mathbb E\!\left[
\bigl(\operatorname{Tr}(\Sigma\Sigma^\top)(X(t))\bigr)^{r/2}
\right]dt\le
\mathrm C_r
\left(\prod_{j=1}^d\eta_j\right)^{-r/2}.
\label{eq:verification-martingale-strong}
\end{align}
The moments in \eqref{eq:verification-drift-strong} and \eqref{eq:verification-martingale-strong} are finite because \(b\) has linear growth, \(X(t)\) has moments of every order on \([0,T]\), and \(\Sigma\) is bounded. For \(r=2\), the first inequality in \eqref{eq:verification-strong-bf} follows directly from \eqref{eq:drift-bound}--\eqref{eq:martingale-bound} and the definition of \(\mathrm C_{bf}\) in \eqref{eq:variance-constants}. For \(r>2\), combining \eqref{eq:verification-drift-strong} and \eqref{eq:verification-martingale-strong}, and enlarging \(\mathrm C_r\) using the fixed positive quantity \(\mathrm C_{bf}\), proves the second inequality in \eqref{eq:verification-strong-bf}. 
\\
\\
We now apply Proposition~\ref{prop:hilbert-uniform-deviation} to \(Z_{\boldsymbol\eta}\), with
\(
s_{\boldsymbol\eta}^2
:=
\mathrm C_{bf}/\prod_{j=1}^d\eta_j,
\,
v_{\boldsymbol\eta}:=\mathrm C_w,
\,
r_0=8,
\,
\Lambda_N=\mathcal H_N.
\)
By Assumption~\ref{ass:GL-grid},
\(
\sum_{\boldsymbol\eta\in\mathcal H_N}s_{\boldsymbol\eta}^2
\le
\mathrm C N^2(\log N)^d
=:L_N,
\)
and the last assertion of Proposition~\ref{prop:hilbert-uniform-deviation}, together with the second part of Assumption~\ref{ass:GL-grid}, gives the required exponential summability. Since \(\kappa_1/6\ge12\), the proposition, applied with \(\kappa=\kappa_1/6\), yields
\[
\mathrm C\left\{\frac1N+
\frac{L_N(\log N)^6}{N^3}\right\}
\le
\mathrm C\frac{(\log N)^{d+6}}{N},
\]
which proves \eqref{deviation_bf} with \(\rho_{\mathcal H}=d+6\).
\\
\\
For $\widehat{bf}_{N,\boldsymbol h,\boldsymbol\eta}$, we proceed similarly to above. Let \(\boldsymbol h\in\mathcal H_N\), replace
\(K_{\boldsymbol\eta}\) throughout by
\(K_{\boldsymbol h}*K_{\boldsymbol\eta}\). Young's inequality gives
\[
\|K_{\boldsymbol h}*K_{\boldsymbol\eta}\|_1
\le\|K\|_1^2,
\qquad
\|K_{\boldsymbol h}*K_{\boldsymbol\eta}\|_2
\le\|K\|_1\|K_{\boldsymbol\eta}\|_2,
\qquad
\|(K_{\boldsymbol h}*K_{\boldsymbol\eta})*\psi\|_2
\le\|K\|_1^2\|\psi\|_2.
\]
Thus \eqref{eq:verification-weak-bf} remains valid with a constant independent of \(\boldsymbol h\), while \eqref{eq:verification-strong-bf} holds with
\(
s_{\boldsymbol h,\boldsymbol\eta}^2
:=
\|K\|_1^2\mathrm C_{bf}/\prod_{j=1}^d\eta_j.
\)
Since \(\kappa_1/(6\|K\|_1^2)\ge12\), applying the proposition with \(\kappa=\kappa_1/(6\|K\|_1^2)\) for each fixed \(\boldsymbol h\) gives the second numerator bound, uniformly in \(\boldsymbol h\).
\\
\\
Finally, for \(\widehat f_{N,\boldsymbol\eta}\) and \(\widehat f_{N,\boldsymbol h,\boldsymbol\eta}\), define
\(
Z^f_{\boldsymbol\eta}(x)
:=
\frac1{T-t_0}\int_{t_0}^T
\delta(t)K_{\boldsymbol\eta}(X(t)-x)\,dt,
\)
and let
\(\mathcal S_0^f\subset C_c^\infty(\mathbb R^d;\mathbb R)\) be countable, symmetric, and dense in the unit sphere of \(L^2(\mathbb R^d;\mathbb R)\). The same argument, with Fubini's theorem and Jensen's inequality in place of the stochastic-integral calculations, gives
{\small
\[
\sup_{\psi\in\mathcal S_0^f}
\mathbb E\!\left[
\langle Z^f_{\boldsymbol\eta},\psi\rangle_2^2
\right]
\le\mathrm C_w,
\qquad
\mathbb E\!\left[\|Z^f_{\boldsymbol\eta}\|_2^2\right]
\le
\frac{\mathrm C_f}{\prod_{j=1}^d\eta_j},
\qquad
\mathbb E\!\left[\|Z^f_{\boldsymbol\eta}\|_2^r\right]
\le
\mathrm C_r\left(
\frac{\mathrm C_f}{\prod_{j=1}^d\eta_j}
\right)^{r/2}.
\]
}
Applying Proposition~\ref{prop:hilbert-uniform-deviation} with \(s_{\boldsymbol\eta}^2=\mathrm C_f/\prod_j\eta_j\) and \(\kappa=\kappa_3/6\) proves the bound for \(\widehat f_{N,\boldsymbol\eta}\). A similar Young’s inequality argument proves the bound for \(\widehat f_{N,\boldsymbol h,\boldsymbol\eta}\). This completes the proof.
\qed
\subsection{Proof of Theorem~\ref{thm:GL-oracle}}
\label{subsec:pf:GL-oracle}

Throughout the proof, \(\mathrm C\) denotes a finite positive constant
that may change from line to line and is independent of \(N\) and of the
bandwidths in \(\mathcal H_N\).
\begingroup 
For \(\boldsymbol h\in\mathcal H_N\), set
\begin{equation*}
A_N^b(\boldsymbol h)
:=
\sup_{\boldsymbol\eta\in\mathcal H_N}
\left(
\big\|
\widehat{bf}_{N,\boldsymbol h,\boldsymbol\eta}
-
\widehat{bf}_{N,\boldsymbol\eta}
\big\|_2^2
-
\frac{\kappa_1\mathrm C_{bf}}
{N\prod_{j=1}^d\eta_j}
\right)_+,
\end{equation*}
namely, the supremum term in \eqref{eq:GL-hhat-bf}.
\endgroup We prove inequality~\eqref{eq:GL-oracle-bf}. Inequality~\eqref{eq:GL-oracle-f} follows from the same
argument, with  \(\widehat f_{N,\boldsymbol h}\) in
place of \(\widehat{bf}_{N,\boldsymbol h}\).

Fix \(\boldsymbol h\in\mathcal H_N\). By the definition of
\(A_N^b(\boldsymbol h)\), for every
\(\boldsymbol\eta\in\mathcal H_N\),
{
\begin{equation}
\label{eq:GL-basic-bound-aniso}
\big\|
\widehat{bf}_{N,\boldsymbol h,\boldsymbol\eta}
-
\widehat{bf}_{N,\boldsymbol\eta}
\big\|_{2}^2
\le
A_N^b(\boldsymbol h)
+
\frac{\kappa_1\mathrm C_{bf}}{N \prod_{j=1}^d \eta_j} .
\end{equation}
}
Using Lemma~\ref{lem:GL-commute}, the triangle inequality gives
{
\begin{align}
\big\|
\widehat{bf}_{N,\widehat{\boldsymbol h}_N}
-
b\overline f
\big\|_{2}^2
&\le
3\big\|
\widehat{bf}_{N,\boldsymbol h}
-
b\overline f
\big\|_{2}^2
+3\big\|
\widehat{bf}_{N,\boldsymbol h,\widehat{\boldsymbol h}_N}
-
\widehat{bf}_{N,\widehat{\boldsymbol h}_N}
\big\|_{2}^2 
+3\big\|
\widehat{bf}_{N,\widehat{\boldsymbol h}_N,\boldsymbol h}
-
\widehat{bf}_{N,\boldsymbol h}
\big\|_{2}^2 .
\label{eq:GL-triangle-aniso}
\end{align}
}
Applying \eqref{eq:GL-basic-bound-aniso} twice, first with
\((\boldsymbol h,\boldsymbol\eta)
=(\boldsymbol h,\widehat{\boldsymbol h}_N)\) and then with
\((\boldsymbol h,\boldsymbol\eta)
=(\widehat{\boldsymbol h}_N,\boldsymbol h)\), yields
{
\begin{align}
\big\|
\widehat{bf}_{N,\widehat{\boldsymbol h}_N}
-
b\overline f
\big\|_{2}^2
&\le
3\big\|
\widehat{bf}_{N,\boldsymbol h}
-
b\overline f
\big\|_{2}^2
+
3A_N^b(\boldsymbol h) +
3\frac{\kappa_1\mathrm C_{bf}}
{N \prod_{j=1}^d \widehat h_{N,j}}  +
3A_N^b(\widehat{\boldsymbol h}_N)
+
3\frac{\kappa_1\mathrm C_{bf}}
{N \prod_{j=1}^d h_j} .
\label{eq:GL-before-min-aniso}
\end{align}
}
By the definition of \(\widehat{\boldsymbol h}_N\),
\(
A_N^b(\widehat{\boldsymbol h}_N)
+
\frac{\kappa_2\mathrm C_{bf}}{N \prod_{j=1}^d \widehat h_{N,j}}
\le
A_N^b(\boldsymbol h)
+
\frac{\kappa_2\mathrm C_{bf}}{N \prod_{j=1}^d h_j} .
\)
Since \(\kappa_2\ge\kappa_1\), substitution in
\eqref{eq:GL-before-min-aniso} gives
{
\begin{equation}
\label{eq:GL-start-ineq-aniso}
\big\|
\widehat{bf}_{N,\widehat{\boldsymbol h}_N}
-
b\overline f
\big\|_{2}^2
\le
3\big\|
\widehat{bf}_{N,\boldsymbol h}
-
b\overline f
\big\|_{2}^2
+
6A_N^b(\boldsymbol h)
+
6\frac{\kappa_2\mathrm C_{bf}}{N \prod_{j=1}^d h_j} .
\end{equation}
}
We now control \(A_N^b(\boldsymbol h)\). For fixed
\(\boldsymbol h,\boldsymbol\eta\in\mathcal H_N\),
\begin{align}
\big\|
\widehat{bf}_{N,\boldsymbol h,\boldsymbol\eta}
-
\widehat{bf}_{N,\boldsymbol\eta}
\big\|_{2}^2
&\le
3\big\|
\mathbb E\!\left[\widehat{bf}_{N,\boldsymbol h,\boldsymbol\eta}\right]
-
\mathbb E\!\left[\widehat{bf}_{N,\boldsymbol\eta}\right]
\big\|_{2}^2 
+
3\big\|
\widehat{bf}_{N,\boldsymbol h,\boldsymbol\eta}
-
\mathbb E\!\left[\widehat{bf}_{N,\boldsymbol h,\boldsymbol\eta}\right]
\big\|_{2}^2 \nonumber 
\\& \qquad +
3\big\|
\widehat{bf}_{N,\boldsymbol\eta}
-
\mathbb E\!\left[\widehat{bf}_{N,\boldsymbol\eta}\right]
\big\|_{2}^2 .
\label{eq:GL-add-subtract-aniso}
\end{align}
Moreover, by Lemma~\ref{lem:GL-commute} and \eqref{eq:expectation-bfhat},
and Young's inequality,
{
\begin{align}
\big\|
\mathbb E\!\left[\widehat{bf}_{N,\boldsymbol h,\boldsymbol\eta}\right]
-
\mathbb E\!\left[\widehat{bf}_{N,\boldsymbol\eta}\right]
\big\|_{2}^2
&=
\big\|
K_{\boldsymbol\eta}*
\big(K_{\boldsymbol h}*(b\overline f)-b\overline f\big)
\big\|_{2}^2 \le
\|K\|_1^2
\big\|
K_{\boldsymbol h}*(b\overline f)-b\overline f
\big\|_{2}^2 .
\label{eq:GL-bias-control-aniso}
\end{align}
}
Combining \eqref{eq:GL-add-subtract-aniso} and
\eqref{eq:GL-bias-control-aniso}, we obtain
{
\begin{align}
A_N^b(\boldsymbol h)
&\le
3\|K\|_1^2
\big\|
K_{\boldsymbol h}*(b\overline f)-b\overline f
\big\|_{2}^2 +
3\sup_{\boldsymbol\eta\in\mathcal H_N}
\left(
\big\|
\widehat{bf}_{N,\boldsymbol h,\boldsymbol\eta}
-
\mathbb E\!\left[\widehat{bf}_{N,\boldsymbol h,\boldsymbol\eta}\right]
\big\|_{2}^2
-
\frac{\kappa_1\mathrm C_{bf}}{6N \prod_{j=1}^d \eta_j}
\right)_+ \nonumber\\
&\quad
+
3\sup_{\boldsymbol\eta\in\mathcal H_N}
\left(
\big\|
\widehat{bf}_{N,\boldsymbol\eta}
-
\mathbb E\!\left[\widehat{bf}_{N,\boldsymbol\eta}\right]
\big\|_{2}^2
-
\frac{\kappa_1\mathrm C_{bf}}{6N \prod_{j=1}^d \eta_j}
\right)_+ .
\label{eq:GL-A-split-aniso}
\end{align}
}

Taking expectations in \eqref{eq:GL-A-split-aniso} and applying
\begingroup Lemma~\ref{lem:GL-deviation-verification}\endgroup{} gives
{
\begin{equation}
\label{eq:GL-EA-bound-aniso}
\mathbb E\!\left[A_N^b(\boldsymbol h)\right]
\le
\mathrm C
\big\|
K_{\boldsymbol h}*(b\overline f)-b\overline f
\big\|_{2}^2
+
\mathrm C\frac{(\log N)^{\rho_{\mathcal H}}}{N}.
\end{equation}
}
Taking expectations in \eqref{eq:GL-start-ineq-aniso}, using
\eqref{eq:GL-EA-bound-aniso}, and then applying
Proposition~\ref{prop:fixed-bandwidth-risk}, we obtain
{
\[
\mathbb E\!\left[
\big\|
\widehat{bf}_{N,\widehat{\boldsymbol h}_N}
-
b\overline f
\big\|_{2}^2
\right]
\le
\mathrm C
\left[
\big\|
K_{\boldsymbol h}*(b\overline f)-b\overline f
\big\|_{2}^2
+
\frac{\kappa_2\mathrm C_{bf}}{N \prod_{j=1}^d h_j}
\right]
+
\mathrm C\frac{(\log N)^{\rho_{\mathcal H}}}{N}.
\]
}
Since \(\boldsymbol h\in\mathcal H_N\) was arbitrary, taking the
infimum over \(\boldsymbol h\) proves \eqref{eq:GL-oracle-bf};
the proof of \eqref{eq:GL-oracle-f} is identical, with
\(\widehat f_{N,\boldsymbol h}\), \(\overline f\), \(\mathrm C_f\) in
place of \(\widehat{bf}_{N,\boldsymbol h}\), \(b\overline f\),
\(\mathrm C_{bf}\).

It remains to prove the ratio assertion. The pointwise bound
\eqref{eq:risk-ratio-fixed} holds for every pair of bandwidths and
every \(x\in I\); applying it with
\(\boldsymbol h=\widehat{\boldsymbol h}_N\),
\(\boldsymbol h'=\widehat{\boldsymbol h}'_N\), integrating over \(I\),
taking expectations, and using
\eqref{eq:GL-oracle-bf}--\eqref{eq:GL-oracle-f} gives
\eqref{eq:GL-oracle-ratio} \begingroup after choosing
\(\mathrm C_5\) large enough. Finally, under the additional grid and
smoothness assumptions of Theorem~\ref{thm:GL-oracle},
Lemma~\ref{lem:nikolskii-kernel-bias} and the bandwidth-approximation
argument in Remark~\ref{rem:GL-rate}(i) yield
\eqref{eq:GL-rate-final}.\endgroup
\qed
\section{Proofs for Section~\ref{sec:minimax-rate}}
\label{app:proofs-minimax}

\subsection{Proof of Theorem~\ref{thm:minimax-rate}}
\label{Sec:pf-minimax-rate}

\begingroup 
The proof follows Tsybakov's many-hypotheses method and rests on two
technical ingredients, proved in
Sections~\ref{Sec:pf-lipschitz-packing} and~\ref{Sec:pf-kl-girsanov}.
a separated finite family of drifts, and a control of the associated
Kullback--Leibler divergences.
\endgroup

\begin{lemma}
\label{lem:lipschitz-packing}
Under the assumptions of Theorem~\ref{thm:minimax-rate}, there exist
constants \(\ell_0,c_1,c_2,c_3>0\) such that for every
\(\ell\in(0,\ell_0]\) and every small enough \(\rho>0\), there are
\(M\ge1\) and
\(b_0,\ldots,b_M\in\mathcal B_{\boldsymbol\alpha}(L_\alpha,L_0,B)\)
with \(b_0=0\),
\[
\log M\ge c_1\rho^{-d/\bar\alpha},
\qquad
\min_{0\le q<q'\le M}\|b_q-b_{q'}\|_{2,I}^2\ge c_2\ell^2\rho^2,
\qquad
\max_{0\le q\le M}\|b_q\|_\infty\le c_3\ell\rho.
\]
\end{lemma}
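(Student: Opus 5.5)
The plan is the standard anisotropic bump construction combined with the Varshamov--Gilbert bound. Fix a nonnegative, nonzero $C^\infty$ function $\varphi:\mathbb R\to\mathbb R$ supported in $(0,1)$ and set $\Psi(u):=\prod_{j=1}^d\varphi(u_j)$. Given $\rho>0$, we choose anisotropic widths $\epsilon_j:=\rho^{1/\alpha_j}$, $j=1,\ldots,d$, so that $\epsilon_j^{\alpha_j}=\rho$ for every $j$. We then tile the cube $\prod_j[x_j^\star,x_j^\star+L_Q]\subset I$ by $m_j:=\lfloor L_Q/\epsilon_j\rfloor$ intervals in direction $j$, which gives $m:=\prod_j m_j\asymp\prod_j\rho^{-1/\alpha_j}=\rho^{-d/\bar\alpha}$ disjoint boxes with lower corners $x^{(k)}$, $k=1,\ldots,m$. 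For $\omega\in\{0,1\}^m$ we define
\[
b_\omega(x):=\ell\rho\sum_{k=1}^m\omega_k\,\Psi\bigl((x-x^{(k)})/\boldsymbol\epsilon\bigr)\,e_1 .
\]
By the Varshamov--Gilbert bound \cite[Lemma~2.9]{tsybakov2009introduction}, we may extract $\omega^{(0)}=0,\omega^{(1)},\ldots,\omega^{(M)}$ with $M\ge2^{m/8}$ and pairwise Hamming distance at least $m/8$, and set $b_q:=b_{\omega^{(q)}}$. This already gives $b_0=0$ and $\log M\ge c_1\rho^{-d/\bar\alpha}$.

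The separation and sup-norm bounds are direct computations. The bumps have disjoint supports inside $I$, so
\[
\|b_q-b_{q'}\|_{2,I}^2\ge\tfrac m8\,\ell^2\rho^2\Bigl(\prod_j\epsilon_j\Bigr)\|\Psi\|_2^2 .
\]
Since $m\prod_j\epsilon_j\asymp L_Q^d$, this is at least $c_2\ell^2\rho^2$. Also $\|b_q\|_\infty\le\ell\rho\|\Psi\|_\infty=c_3\ell\rho$.

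Membership in $\mathcal B_{\boldsymbol\alpha}(L_\alpha,L_0,B)$ is the step needing care, and it is where the smallness of $\ell$ and $\rho$ enters. First, $|b_q(0)|\le c_3\ell\rho\le B$ once $\ell\rho$ is small. For the Lipschitz constant, $\nabla b_q$ is bounded by $\ell\rho\max_j\epsilon_j^{-1}\|\nabla\Psi\|_\infty=\ell\max_j\rho^{1-1/\alpha_j}\|\nabla\Psi\|_\infty$. Since every $\alpha_j\ge1$ and $\rho\le1$, we have $\rho^{1-1/\alpha_j}\le1$, so the constant is at most $\ell\|\nabla\Psi\|_\infty\le L_0$ for $\ell\le\ell_0$. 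This is exactly why the class requires $\boldsymbol\alpha\in[1,\infty)^d$.

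For the Nikol'skii condition in direction $j$, with $r_j=\lceil\alpha_j\rceil-1$ and $\vartheta_j=\alpha_j-r_j$, the derivative $\partial_j^{r_j}b_q$ is a sum of disjoint bumps of amplitude $\ell\rho\,\epsilon_j^{-r_j}$. We bound the $L^2$ norm of its increment $\|\partial_j^{r_j}b_q(\cdot+ze_j)-\partial_j^{r_j}b_q\|_2$ in two regimes:
\begin{itemize}
\item for $|z|\le\epsilon_j$, by $|z|\,\|\partial_j^{r_j+1}b_q\|_2$;
\item for $|z|>\epsilon_j$, by $2\|\partial_j^{r_j}b_q\|_2$.
\end{itemize}
Using $\|\sum_k\cdot\|_2^2\asymp m\prod_i\epsilon_i\asymp1$, both cases give the bound $C\ell\rho\,\epsilon_j^{-\alpha_j}|z|^{\vartheta_j}=C\ell|z|^{\vartheta_j}$. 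The lower-order derivatives $\partial_j^{\ell'}b_q$ are in $L^2$ trivially. Hence $\ell\le\ell_0$ small gives the constant $L_\alpha$.

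The only genuinely delicate point is the bookkeeping that the anisotropic scaling $\epsilon_j=\rho^{1/\alpha_j}$ simultaneously makes every directional Hölder seminorm $O(\ell)$ and keeps the Lipschitz constant $O(\ell)$ uniformly in $\rho$. I would verify that step carefully. Everything else is routine, and the constants $c_1,c_2,c_3$ depend only on $\varphi$, $L_Q$, $d$ and $\boldsymbol\alpha$.
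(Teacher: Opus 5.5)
Your proposal is correct and follows essentially the same route as the paper. Both use anisotropic bumps of widths $\rho^{1/\alpha_j}$ and amplitude $\ell\rho$ along $e_1$, selected by Varshamov--Gilbert; both use $\alpha_j\ge1$ for the uniform Lipschitz bound and verify the Nikol'skii condition in the two regimes $|z|\le\rho^{1/\alpha_j}$ and $|z|>\rho^{1/\alpha_j}$, and your tensor-product bump $\Psi$ is simply a special case of the paper's generic $\psi\in C_c^\infty((0,1)^d)$.
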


\begin{lemma}
\label{lem:kl-girsanov}
Let \(b_0,\ldots,b_M\) be given by Lemma~\ref{lem:lipschitz-packing},
and let \(\mathbb P_q^{X,N}\) be the law of
\((X^{(1)},\ldots,X^{(N)})\) under the drift \(b_q\) and the same
diffusion coefficient \(\Sigma\). Under the assumptions of
Theorem~\ref{thm:minimax-rate},
\(\mathbb P_q^{X,N}\ll\mathbb P_0^{X,N}\) for every
\(q=1,\ldots,M\), and there exists \(C>0\),
\begingroup independent of \(q,N,\ell,\rho\),\endgroup
such that
\[
\max_{1\le q\le M}
K(\mathbb P_q^{X,N},\mathbb P_0^{X,N})
\le CN\ell^2\rho^2,
\]
\begingroup 
where \(K(P,Q):=\int\log(dP/dQ)\,dP\) denotes the Kullback--Leibler
divergence.
\endgroup
\end{lemma}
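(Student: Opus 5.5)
The plan is to compute the Kullback--Leibler divergence explicitly via Girsanov's theorem, using tensorization over the $N$ independent paths. Since the trajectories are i.i.d. under every hypothesis, $\mathbb P_q^{X,N}=(\mathbb P_q^{X})^{\otimes N}$. Hence $K(\mathbb P_q^{X,N},\mathbb P_0^{X,N})=N\,K(\mathbb P_q^X,\mathbb P_0^X)$, and it suffices to bound the single-path divergence by $C\ell^2\rho^2$.

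For one path, note that $b_0=0$ and that both $b_0$ and $b_q$ are globally Lipschitz, with bounded difference $|b_q|\le c_3\ell\rho$ by Lemma~\ref{lem:lipschitz-packing}. The diffusion matrix $\Sigma$ is common to both models, and $\Sigma\Sigma^\top$ is uniformly elliptic with $\lambda_\Sigma>0$. So I will set
\[
u_q(x):=\Sigma(x)^\top(\Sigma\Sigma^\top)^{-1}(x)\,b_q(x),
\]
which satisfies $\Sigma u_q=b_q$ and $|u_q|^2=b_q^\top(\Sigma\Sigma^\top)^{-1}b_q\le\lambda_\Sigma^{-1}|b_q|^2$. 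The function $u_q$ is bounded, so Novikov's condition holds trivially. Girsanov's theorem then gives absolute continuity of the path laws on $C([0,T];\mathbb R^d)$, with density
\[
\frac{d\mathbb P_q^X}{d\mathbb P_0^X}(X)=\exp\Bigl(\int_0^T u_q(X)^\top d\widetilde W-\tfrac12\int_0^T|u_q(X)|^2dt\Bigr).
\]
Here $\widetilde W$ denotes the innovation Brownian motion under $\mathbb P_0$. Uniqueness in law of both SDEs (strong uniqueness from the Lipschitz assumptions) identifies this density with the ratio of path laws.

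Evaluating the log-density under $\mathbb P_q$ rewrites the stochastic integral as a $\mathbb P_q$-Brownian integral with mean zero plus $\int_0^T|u_q|^2dt$. This yields
\[
K(\mathbb P_q^X,\mathbb P_0^X)=\tfrac12\,\mathbb E_q\Bigl[\int_0^T|u_q(X(t))|^2dt\Bigr]\le \frac{T}{2\lambda_\Sigma}\|b_q\|_\infty^2\le \frac{Tc_3^2}{2\lambda_\Sigma}\,\ell^2\rho^2 .
\]
The constant $C=Tc_3^2/(2\lambda_\Sigma)$ does not depend on $q$, $N$, $\ell$, or $\rho$. The only step needing care is the identification of the Girsanov density with the Radon--Nikodym derivative of the path laws. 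For this I would cite the standard result (e.g.\ Liptser--Shiryaev) for diffusions with common nondegenerate dispersion and bounded drift difference. The martingale property of the stochastic integral under $\mathbb P_q$ is guaranteed by the boundedness of $u_q$. Multiplying by $N$ concludes the proof.
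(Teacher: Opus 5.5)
Your proposal is correct and is essentially the paper's argument: Girsanov with the bounded process $u_q=\Sigma^\top(\Sigma\Sigma^\top)^{-1}b_q$ (so Novikov holds trivially), the identity $K=\tfrac12\,\mathbb E_q[\int_0^T|u_q(X(t))|^2\,dt]\le C\ell^2\rho^2$, and tensorization over the $N$ independent paths. The only difference is cosmetic. The paper starts under $\mathbb P_q$, changes measure to a $\mathbb Q_q$ whose $X$-marginal is $\mathbb P_0^X$, and then uses $K(\mathbb P_q^X,\mathbb P_0^X)\le K(\mathbb P_q,\mathbb Q_q)$, which spares it from identifying the Girsanov density as a functional of the path. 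You instead identify that density directly, which is legitimate because $\int u_q^\top d\widetilde W=\int b_q^\top(\Sigma\Sigma^\top)^{-1}(X)\,dX$ is $X$-measurable even when $\Sigma$ is not square.
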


Choose \(\rho\asymp N^{-\bar\alpha/(2\bar\alpha+d)}\) and fix
\(\ell\in(0,\ell_0]\), to be taken small enough below. For \(N\) large
enough, \(\rho\) is small enough for Lemma~\ref{lem:lipschitz-packing}
to apply, yielding \(b_0,\ldots,b_M\) with \(b_0=0\). Set
\(s_\rho:=\sqrt{c_2}\,\ell\rho/2\). We apply
Tsybakov~\cite[Theorem~2.5, p.~99]{tsybakov2009introduction} to
\(b_0,\ldots,b_M\) with metric \(\|\cdot\|_{2,I}\); its assumptions are
verified as follows.
\begin{enumerate}[label=\textup{(\roman*)}, leftmargin=*]
\item By Lemma~\ref{lem:lipschitz-packing},
\(\|b_q-b_{q'}\|_{2,I}\ge\sqrt{c_2}\,\ell\rho=2s_\rho\) for
\(0\le q<q'\le M\).
\item \begingroup By Lemma~\ref{lem:kl-girsanov},
\(\mathbb P_q^{X,N}\ll\mathbb P_0^{X,N}\) for
\(q=1,\ldots,M\).\endgroup
\item \begingroup By Lemma~\ref{lem:kl-girsanov},
\(\frac1M\sum_{q=1}^M K(\mathbb P_q^{X,N},\mathbb P_0^{X,N})
\le C\ell^2N\rho^2\).\endgroup
Since \(N\rho^2\asymp\rho^{-d/\bar\alpha}\) while
\(\log M\ge c_1\rho^{-d/\bar\alpha}\), taking \(\ell\) small enough
ensures
\(\frac1M\sum_{q=1}^M K(\mathbb P_q^{X,N},\mathbb P_0^{X,N})
\le\begingroup \kappa\endgroup\log M\)
for some fixed
\(\begingroup \kappa\in(0,1/8)\endgroup\).
\end{enumerate}
Therefore, by Tsybakov~\cite[Theorem~2.5,
p.~99]{tsybakov2009introduction},
\[
\inf_{\widetilde b_N
=\widetilde b_N(\begingroup \mathcal X_N\endgroup)}
\sup_{0\le q\le M}
\begingroup \mathbb P_q^{X,N}\endgroup
\bigl(\|\widetilde b_N-b_q\|_{2,I}\ge s_\rho\bigr)
\ge
\frac{\sqrt M}{1+\sqrt M}
\left(1-2\begingroup \kappa\endgroup
-\sqrt{\frac{2\begingroup \kappa\endgroup}{\log M}}\right),
\]
\begingroup 
and since \(\log M\ge c_1\rho^{-d/\bar\alpha}\to\infty\) as
\(\rho\to0\),
\endgroup
the right-hand side is bounded below by a constant \(c_T>0\) for all
\(N\) large enough. \begingroup Using
\(Z^2\ge s_\rho^2\mathbf1_{\{Z\ge s_\rho\}}\) with
\(Z=\|\widetilde b_N-b_q\|_{2,I}\),\endgroup
\begin{align*}
\inf_{\widetilde b_N
=\widetilde b_N(\begingroup \mathcal X_N\endgroup)}
\sup_{b\in\mathcal B_{\boldsymbol\alpha}(L_\alpha,L_0,B)}
\mathbb E_b\bigl[\|\widetilde b_N-b\|_{2,I}^2\bigr]
&\ge
s_\rho^2
\inf_{\widetilde b_N
=\widetilde b_N(\begingroup \mathcal X_N\endgroup)}
\sup_{0\le q\le M}
\begingroup \mathbb P_q^{X,N}\endgroup
\bigl(\|\widetilde b_N-b_q\|_{2,I}\ge s_\rho\bigr)
\\
&\ge
c_Ts_\rho^2
=\frac{c_Tc_2}{4}\ell^2\rho^2
\ge cN^{-2\bar\alpha/(2\bar\alpha+d)},
\end{align*}
\begingroup 
which proves Theorem~\ref{thm:minimax-rate}.
\endgroup
\qed
\subsection{Proof of Corollary~\ref{cor:minimax-rate-censored}}
\label{Sec:pf-minimax-rate-censored}

\begingroup 
Set
\(A_N:=(X^{(1)},\ldots,X^{(N)},\theta^{(1)},\ldots,\theta^{(N)})\).
Since \(O_N\) is measurable with respect to \(A_N\), every estimator based on
\(O_N\) is also an estimator based on the augmented full observation \(A_N\).
\endgroup
\begingroup 
Thus,
\endgroup
{
\begin{align}
\label{eq:censored-full-comparison}
\inf_{\widetilde b_N=\widetilde b_N(O_N)}
\sup_{b\in\mathcal B_{\boldsymbol\alpha}(L_\alpha,L_0,B)}
\mathbb E_b
\left[
\|\widetilde b_N-b\|_{2,I}^2
\right]
\ge
\inf_{\widetilde b_N=\widetilde b_N(A_N)}
\sup_{b\in\mathcal B_{\boldsymbol\alpha}(L_\alpha,L_0,B)}
\mathbb E_b
\left[
\|\widetilde b_N-b\|_{2,I}^2
\right].
\end{align}
}
\begingroup 
For the finite family \(b_0,\ldots,b_M\) constructed in the proof of
Theorem~\ref{thm:minimax-rate}, let \(\mathbb P_q^{X,\theta,N}\) be the law of
\(A_N\) under \(b_q\). By Assumption~\ref{ass:censoring-nondegeneracy}(C1) and because the
censoring mechanism does not depend on \(b\),
\(\mathbb P_q^{X,\theta,N}=\mathbb P_q^{X,N}\otimes\mathbb P^{\theta,N}\).
Consequently, \(\mathbb P_q^{X,\theta,N}\ll
\mathbb P_0^{X,\theta,N}\) and
\[
K(\mathbb P_q^{X,\theta,N},\mathbb P_0^{X,\theta,N})
=K(\mathbb P_q^{X,N},\mathbb P_0^{X,N}).
\]
\endgroup
\begingroup 
Thus, the results from Lemma~\ref{lem:lipschitz-packing} and \ref{lem:kl-girsanov} used in the proof of
Theorem~\ref{thm:minimax-rate} remain unchanged.
The same application of Tsybakov~\cite[Theorem~2.5, p.~99]{tsybakov2009introduction} yields
\[
\inf_{\widetilde b_N=\widetilde b_N(A_N)}
\sup_{b\in\mathcal B_{\boldsymbol\alpha}(L_\alpha,L_0,B)}
\mathbb E_b\!\left[
\|\widetilde b_N-b\|_{2,I}^2
\right]
\ge cN^{-2\bar\alpha/(2\bar\alpha+d)}.
\]
Combining this inequality with~\eqref{eq:censored-full-comparison} proves the
corollary.
\endgroup
\qed
\subsection{Proof of Lemma~\ref{lem:lipschitz-packing}}
\label{Sec:pf-lipschitz-packing}

\begingroup 
Let \(x^\star=(x^\star_1,\ldots,x^\star_d)\) and
\(Q:=\prod_{j=1}^d[x^\star_j,x^\star_j+L_Q]\subset I\) be the hypercube
from Theorem~\ref{thm:minimax-rate}. For \(\rho>0\), set
\(h_j:=\rho^{1/\alpha_j}\) and \(n_j:=\lfloor L_Q/h_j\rfloor\). For
\(\rho\) small enough, \(h_j\le L_Q/2\) for all \(j\), and the rectangles
\[
Q_\nu:=\prod_{j=1}^d
[x^\star_j+\nu_jh_j,\,x^\star_j+(\nu_j+1)h_j],
\qquad
\nu\in\prod_{j=1}^d\{0,\ldots,n_j-1\},
\]
are contained in \(Q\) and have pairwise disjoint interiors. Enumerate
them as \(Q_1,\ldots,Q_{m_\rho}\), \(m_\rho:=\prod_{j=1}^dn_j\), and let
\(x_k\) denote the lower corner of \(Q_k\), so that
\(Q_k=x_k+\prod_{j=1}^d[0,h_j]\). Since
\(\lfloor L_Q/h_j\rfloor\in[L_Q/(2h_j),L_Q/h_j]\) and
\(\prod_{j=1}^dh_j=\rho^{d/\bar\alpha}\),
\[
m_\rho\ge c_Q\rho^{-d/\bar\alpha}
\quad\text{with } c_Q:=L_Q^d/2^d,
\qquad\text{and}\qquad
m_\rho\prod_{j=1}^dh_j\le L_Q^d.
\]
\endgroup
Let \(\psi\in C_c^\infty((0,1)^d)\) be nonzero with
\(0\le\psi\le1\), and set
\[
\psi_{k,\boldsymbol h}(x):=
\psi\Bigl(\frac{x_1-x_{k,1}}{h_1},\ldots,
\frac{x_d-x_{k,d}}{h_d}\Bigr),
\qquad k=1,\ldots,m_\rho.
\]
\begingroup 
Since \(\operatorname{supp}(\psi)\subset(0,1)^d\), we have
\(\operatorname{supp}(\psi_{k,\boldsymbol h})\subset Q_k\subset I\),
and the \(\psi_{k,\boldsymbol h}\) have pairwise disjoint supports.
\endgroup
For \(\theta\in\{0,1\}^{m_\rho}\), define
\(b_\theta:=\ell\rho\sum_{k=1}^{m_\rho}
\theta_k\psi_{k,\boldsymbol h}\,e_1\),
where \(e_1\) is the first canonical vector of \(\mathbb R^d\) and
\(\ell>0\) is chosen below.

We first check that \(b_\theta\) belongs to the drift class. By the
disjointness of the supports and \(0\le\psi\le1\),
\(\|b_\theta\|_\infty\le\ell\rho\), so
\(|b_\theta(0)|\le B\) for \(\rho\) small enough. Moreover, for
\(\rho\le1\) and every \(j\), since \(\alpha_j\ge1\),
\(\|\partial_jb_\theta\|_\infty
\le\ell\|\partial_j\psi\|_\infty\,\rho^{1-1/\alpha_j}
\le\ell\|\partial_j\psi\|_\infty\);
hence, if \(\ell_0\le L_0/\|\nabla \psi\|_{\infty}\), then
\(b_\theta\) is \(L_0\)-Lipschitz for every \(\ell\in(0,\ell_0]\).

\begingroup 
It remains to verify Nikol'skii regularity. For
\(j=1,\ldots,d\), set \(r_j:=\lceil\alpha_j\rceil-1\) and
\(\vartheta_j:=\alpha_j-r_j\in(0,1]\).
\endgroup
By disjointness of the supports, for every integer \(a\ge0\),
\begin{align*}
\|\partial_j^a b_\theta\|_2^2
=
\ell^2\rho^2h_j^{-2a}
\Bigl(\prod_{i=1}^dh_i\Bigr)
\|\partial_j^a\psi\|_2^2
\sum_{k=1}^{m_\rho}\theta_k
\begingroup 
\le
L_Q^d\|\partial_j^a\psi\|_2^2\,\ell^2\rho^2h_j^{-2a},
\endgroup
\end{align*}
\begingroup 
using \(\sum_k\theta_k\le m_\rho\) and
\(m_\rho\prod_ih_i\le L_Q^d\) in the inequality;
\endgroup
in particular, \(\partial_j^ab_\theta\in
L^2(\mathbb R^d;\mathbb R^d)\) for \(a=0,\ldots,r_j\).
\begingroup 
Since \(\rho=h_j^{\alpha_j}\), we obtain, for \(|z|\le h_j\),
\[
\bigl\|\partial_j^{r_j}b_\theta(\cdot+ze_j)
-\partial_j^{r_j}b_\theta\bigr\|_2
\le
|z|\,\|\partial_j^{r_j+1}b_\theta\|_2
\le
C\ell|z|\,h_j^{\alpha_j-r_j-1}
=
C\ell|z|^{\vartheta_j}
\Bigl(\frac{|z|}{h_j}\Bigr)^{1-\vartheta_j}
\le
C\ell|z|^{\vartheta_j},
\]
\endgroup
while for \(|z|>h_j\),
\[
\bigl\|\partial_j^{r_j}b_\theta(\cdot+ze_j)
-\partial_j^{r_j}b_\theta\bigr\|_2
\le
2\|\partial_j^{r_j}b_\theta\|_2
\le
C\ell h_j^{\vartheta_j}
\le
C\ell|z|^{\vartheta_j}.
\]
Here \(C\) depends only on \(\psi\), \(d\), \(Q\), and
\(\boldsymbol\alpha\). Decreasing \(\ell_0\) if necessary so that
\(C\ell_0\le L_\alpha\), we conclude
\(b_\theta\in\mathcal N_d^d(\boldsymbol\alpha,L_\alpha)\), and
therefore
\(b_\theta\in\mathcal B_{\boldsymbol\alpha}(L_\alpha,L_0,B)\).

We now select the binary vectors. With the Hamming distance
\(d_H(\theta,\theta'):=\sum_{k=1}^{m_\rho}
\mathbf1_{\{\theta_k\ne\theta'_k\}}\), the Varshamov--Gilbert bound
\cite[Lemma~2.9, p.~104]{tsybakov2009introduction} provides
\(\theta^0,\ldots,\theta^M\in\{0,1\}^{m_\rho}\) with
\(\theta^0=(0,\ldots,0)\),
\(d_H(\theta^q,\theta^{q'})\ge m_\rho/8\) for \(q\ne q'\), and
\(M\ge2^{m_\rho/8}\); hence
\begingroup 
\(\log M\ge(\log2)\,m_\rho/8\ge c_1\rho^{-d/\bar\alpha}\) with
\(c_1:=(\log2)c_Q/8\).
\endgroup
Finally, set \(b_q:=b_{\theta^q}\). By disjointness of the supports,
\begin{align*}
\|b_q-b_{q'}\|_{2,I}^2
=
\ell^2\rho^2
\Bigl(\prod_{j=1}^dh_j\Bigr)\|\psi\|_2^2\,
d_H(\theta^q,\theta^{q'})
\ge
\frac{\ell^2\|\psi\|_2^2}{8}\,
\rho^2\rho^{d/\bar\alpha}\,m_\rho
\ge
c_2\ell^2\rho^2,
\qquad
c_2:=\frac{c_Q\|\psi\|_2^2}{8},
\end{align*}
where the last inequality uses
\(m_\rho\ge c_Q\rho^{-d/\bar\alpha}\). This proves the lemma.
\qed
\subsection{Proof of Lemma~\ref{lem:kl-girsanov}}
\label{Sec:pf-kl-girsanov}
\begingroup Fix \(q\in\{1,\ldots,M\}\).\endgroup
Work on a weak-solution space carrying \((X,W_q)\) under \(\mathbb P_q\),
and let \(\mathbb P_q^X\) denote the \(X\)-marginal. Under \(\mathbb P_q\),
\[
dX(t)=b_q(X(t))\,dt+\Sigma(X(t))\,dW_q(t),
\qquad X(0)=x_0,
\]
where \(W_q\) is a Brownian motion; recall that \(b_0=0\) in the
construction of Lemma~\ref{lem:lipschitz-packing}. Set
\(a:=\Sigma\Sigma^\top\) and \(\eta_q:=\Sigma^\top a^{-1}b_q\). Then
\(\eta_q(X(t))\) is progressively measurable, and by uniform ellipticity
of \(a\) together with Lemma~\ref{lem:lipschitz-packing},
\begingroup 
\[
|\eta_q(x)|^2
=
b_q(x)^\top a(x)^{-1}b_q(x)
\le
C_\Sigma\,\ell^2\rho^2,
\qquad x\in\mathbb R^d,
\]
\endgroup
so that \(\int_0^T|\eta_q(X(t))|^2\,dt\le C_\Sigma T\ell^2\rho^2\)
deterministically; in particular, Novikov's condition holds. By
Girsanov's theorem, the probability measure \(\mathbb Q_q\) with density
\[
\frac{d\mathbb Q_q}{d\mathbb P_q}
=
\exp\left(
-\int_0^T \eta_q(X(t))^\top dW_q(t)
-\frac12\int_0^T |\eta_q(X(t))|^2\,dt
\right)
=:Z_q(T)
\]
makes \(\widehat W_q(t):=W_q(t)+\int_0^t\eta_q(X(s))\,ds\) a Brownian
motion under \(\mathbb Q_q\). Since \(\Sigma\eta_q=b_q\), under
\(\mathbb Q_q\) we have \(dX(t)=\Sigma(X(t))\,d\widehat W_q(t)\).
\begingroup 
By uniqueness in law for this zero-drift SDE, the \(X\)-marginal of
\(\mathbb Q_q\) is \(\mathbb P_0^X\); and since \(Z_q(T)>0\)
\(\mathbb P_q\)-a.s., the measures \(\mathbb P_q\) and \(\mathbb Q_q\)
are equivalent, whence \(\mathbb P_q^X\ll\mathbb P_0^X\).
\endgroup
Moreover, 
\begin{align*}
K(\mathbb P_q^X,\mathbb P_0^X)
\le
K(\mathbb P_q,\mathbb Q_q)
=
\mathbb E_q\!\left[-\log Z_q(T)\right]
=
\frac12\,
\mathbb E_q\!\left[\int_0^T|\eta_q(X(t))|^2\,dt\right]
\le
C\ell^2\rho^2,
\end{align*}
\begingroup 
where the stochastic integral in \(-\log Z_q(T)\) has zero mean because
its integrand is bounded, hence square-integrable.
\endgroup
For \(N\) independent paths, absolute continuity is preserved under
products and Kullback-Leibler divergence is additive, so
\begingroup 
\(
K(\mathbb P_q^{X,N},\mathbb P_0^{X,N})
=N\,K(\mathbb P_q^X,\mathbb P_0^X)
\le CN\ell^2\rho^2,
\)
\endgroup
which proves the lemma.
\qed
\section{Implementation details and additional simulation results}
\label{app:simulation-implementation-tables}

This appendix collects all simulation choices needed to reproduce
Section~\ref{sec:simulation}. Additional drift
reconstructions, empirical MISE curves, and the corresponding numerical
values are reported at the end.

\smallskip
\noindent\textbf{Effective region.}
The retained region is based on
\[
p_{\mathrm{unc}}(x)=
\frac{\int_{t_0}^T\mathbb P(\delta_t=1\mid Y_t=x)p_{Y_t}(x)\,dt}
{\int_{t_0}^T p_{Y_t}(x)\,dt},
\quad
\widehat p_{\mathrm{unc}}(x)=
\frac{\sum_{i,k}\delta^{(i)}_{t_k}K_{(0.5,0.5)}(Y^{(i)}_{t_k}-x)}
{\sum_{i,k}K_{(0.5,0.5)}(Y^{(i)}_{t_k}-x)}.
\]
Here the sums run over \(i=1,\ldots,N\) and \(k=n_0,\ldots,n\), whenever
the denominator is positive. We set
\(\mathcal R=\{x\in[-2,2]^2:\widehat p_{\mathrm{unc}}(x)>0.5\}\). The condition \(\widehat p_{\mathrm{unc}}(x)>0.5\) retains points that
are uncensored at least half of the time, a quantitative empirical
analog of the visibility condition in
Assumption~\ref{ass:censoring-nondegeneracy}, which guarantees
\(\inf_I\overline f>0\) via
Proposition~\ref{prop:positive-denominator}. Since \(\mathcal R\) only
restricts the evaluation grid and does not enter the estimators, its
data-dependence does not interfere with the theory; outside
\(\mathcal R\), too few uncensored observations are available for any
method to be informative.

\smallskip
\noindent\textbf{MISE.}
The evaluation grid is
\[
\mathcal G=\left\{\left(-2+\frac{4\ell_1}{34},-2+\frac{4\ell_2}{34}\right):
(\ell_1,\ell_2)\in\{0,\ldots,34\}^2\right\},
\qquad
\mathcal G_{\mathcal R}=\mathcal G\cap\mathcal R.
\]
The empirical MISE is
\[
\widehat{\operatorname{MISE}}_{\mathcal R}^{\mathrm{MC}}=
\frac{1}{M}\sum_{m=1}^M\frac{1}{|\mathcal G_{\mathcal R}|}
\sum_{x\in\mathcal G_{\mathcal R}}
\|\widehat b^{(m)}(x)-b(x)\|^2,
\qquad M=50.
\]
The normalization by \(|\mathcal G_{\mathcal R}|\) accounts for the fact
that the retained region may have different sizes across censoring
mechanisms.

\smallskip
\noindent\textbf{Discrete kernel estimators.}
Let \(n_0=t_0/\Delta\). For \(x\in I\),
\begin{align}
\widehat f_{N,\mathbf h'}(x)
&=\frac{1}{N(n-n_0)}\sum_{i=1}^N\sum_{k=n_0}^{n-1}
\delta^{(i)}_{t_k}K_{\mathbf h'}(Y^{(i)}_{t_k}-x),
\label{eq:app-discrete-f}\\[-0.3em]
\widehat{bf}_{N,\mathbf h}(x)
&=\frac{1}{N(n-n_0)\Delta}\sum_{i=1}^N\sum_{k=n_0}^{n-1}
\delta^{(i)}_{t_k}\delta^{(i)}_{t_{k+1}}K_{\mathbf h}(Y^{(i)}_{t_k}-x)
\bigl(Y^{(i)}_{t_{k+1}}-Y^{(i)}_{t_k}\bigr).
\label{eq:app-discrete-bf}
\end{align}
The factor \(\delta^{(i)}_{t_{k+1}}\) keeps only increments whose two
endpoints are uncensored. The ratio is
\[
\widehat b_{N,\mathbf h,\mathbf h'}(x)=
\frac{\widehat{bf}_{N,\mathbf h}(x)}{\widehat f_{N,\mathbf h'}(x)}
\mathbf 1_{\{\widehat f_{N,\mathbf h'}(x)\ge \tau\}},
\quad
\tau=0.05\,\mathrm{median}\{\widehat f_{N,\mathbf h'}(x):x\in\mathcal G,
\widehat f_{N,\mathbf h'}(x)>0\}.
\]
The same thresholding convention is used for GL and PCO.

\smallskip
\noindent\textbf{Kernel and bandwidth grids.}
We use the product biweight kernel
\[
k(u)=\frac{15}{16}(1-u^2)^2\mathbf 1_{\{|u|\le1\}},
\qquad
K_{\mathbf h}(x)=\frac{1}{h_1h_2}k(x_1/h_1)k(x_2/h_2).
\]
For the two OU models,
\(\mathcal H_{\mathrm{iso}}=\{0.18(1.10/0.18)^{(j-1)/29}:1\le j\le30\}\).
For the Fixman-like model,
\[
\mathcal H_{\mathrm{aniso}}=\mathcal H_1\times\mathcal H_2,\quad
\mathcal H_1=\{0.25,0.28,0.32,0.35,0.40,0.45,0.52,0.58\},
\]
\[
\mathcal H_2=\{0.35,0.40,0.45,0.52,0.58,0.67,0.75,0.86,1.00,1.15\}.
\]
\smallskip
\noindent\textbf{GL calibration constants and plug-in constant.}
For GL, numerator and denominator bandwidths are selected separately by
\eqref{eq:GL-hhat-bf} and \eqref{eq:GL-hhat-f}, with
\((\kappa_1,\kappa_2,\kappa_3,\kappa_4)=(0.001,0.002,0.001,0.002)\).
These values were calibrated once on the \emph{uncensored} isotropic OU
model, by retaining values for which the selection rule is nondegenerate,
that is, for which the selected bandwidths do not systematically coincide
with the smallest or the largest element of the bandwidth grid. They are
then kept fixed across all models and censoring scenarios.

The constant \(\mathrm C_f=\|K\|_2^2\) is known. The unknown
\(\mathrm C_{bf}\) in \eqref{eq:variance-constants} is replaced by the
plug-in estimate
\[
\widehat{\mathrm C}_{bf}
:=
2\|K\|_2^2
\left(
\widehat D_N
+
\frac{1}{T-t_0}\,
\widehat S_N
\right),
\]
where \(\widehat D_N\) and \(\widehat S_N\) estimate
\(\|b\|_{2,\overline f}^2\) and
\(\bigl\|\sqrt{\operatorname{Tr}(\Sigma\Sigma^\top)}\bigr\|_{2,\overline f}^2\),
respectively, using the empirical counterparts of identity~\eqref{def:uncensoring-prob}.
\begin{align*}
\widehat D_N
&:=
\frac{1}{N(n-n_0)}
\sum_{i=1}^N\sum_{k=n_0}^{n-1}
\delta^{(i)}_{t_k}\,
\bigl|\widehat b_{N,\mathbf h_0,\mathbf h_0}(Y^{(i)}_{t_k})\bigr|^2,
\qquad
\mathbf h_0=(0.5,0.5),
\\
\widehat S_N
&:=
\frac{1}{N(n-n_0)\Delta}
\sum_{i=1}^N\sum_{k=n_0}^{n-1}
\delta^{(i)}_{t_k}\delta^{(i)}_{t_{k+1}}\,
\bigl|Y^{(i)}_{t_{k+1}}-Y^{(i)}_{t_k}\bigr|^2 .
\end{align*}
where in \(\widehat D_N\), the estimator 
\(\widehat b_{N,\mathbf h_0,\mathbf h_0}\) of $b$ with a fixed bandwidth is used as a pilot estimator. In \(\widehat S_N\), the squared increment
\(|Y^{(i)}_{t_{k+1}}-Y^{(i)}_{t_k}|^2/\Delta\) over pairs of uncensored
endpoints is the discrete quadratic-variation analog of
\(\operatorname{Tr}(\Sigma\Sigma^\top)(Y^{(i)}_{t_k})\).

\smallskip
\noindent\textbf{Least-squares estimator.}
For \(d=1\), LS is the censored projection estimator of
Huang~\cite{huang2026censoredsde}. For \(d=2\), we use the tensor-product
implementation of Dussap~\cite{dussap2023nonparametric}. On \(A=[-2,2]\),
let
\[
\varphi_0(u)=4^{-1/2},\quad
\varphi_{2\ell-1}(u)=\sqrt{2/4}\cos\{2\pi\ell(u+2)/4\},\quad
\varphi_{2\ell}(u)=\sqrt{2/4}\sin\{2\pi\ell(u+2)/4\}.
\]
For \(\mathbf m=(m_1,m_2)\), define
\(\Phi_{j_1,j_2}(x)=\varphi_{j_1}(x_1)\varphi_{j_2}(x_2)\),
\(0\le j_1<m_1\), \(0\le j_2<m_2\). With indices \(j,k\),
\begin{align*}
\widehat\Psi_{\mathbf m}(j,k)
&=\frac{1}{N(n-n_0)}\sum_{i=1}^N\sum_{r=n_0}^{n-1}
\delta^{(i)}_{t_r}\delta^{(i)}_{t_{r+1}}\Phi_j(Y^{(i)}_{t_r})\Phi_k(Y^{(i)}_{t_r}),\\[-0.3em]
\widehat Z_{\mathbf m,\ell}(j)
&=\frac{1}{N(n-n_0)\Delta}\sum_{i=1}^N\sum_{r=n_0}^{n-1}
\delta^{(i)}_{t_r}\delta^{(i)}_{t_{r+1}}\Phi_j(Y^{(i)}_{t_r})
\bigl(Y^{(i)}_{\ell,t_{r+1}}-Y^{(i)}_{\ell,t_r}\bigr),\quad \ell=1,2.
\end{align*}
If \(\widehat\Psi_{\mathbf m}\) is nonsingular,
\(\widehat\alpha_{\mathbf m,\ell}=\widehat\Psi_{\mathbf m}^{-1}
\widehat Z_{\mathbf m,\ell}\). The reported LS curve is the oracle
estimator
\[
\widehat b_N^{\mathrm{LS}}
=\widehat b^{\mathrm{LS}}_{\widehat{\mathbf m}_{\star}},
\qquad
\widehat{\mathbf m}_{\mathrm{\star}}\in
\operatorname*{arg\,min}_{\mathbf m\in\mathcal M_{\mathrm{LS}}}
\frac{1}{|\mathcal G_{\mathcal R}|}\sum_{x\in\mathcal G_{\mathcal R}}
\|\widehat b^{\mathrm{LS}}_{\mathbf m}(x)-b(x)\|^2.
\]
Thus the true drift is used only to select the LS dimension; LS is not a
data-driven adaptive estimator.

\smallskip
\noindent\textbf{PCO estimator.}
We implement the PCO method of Lacour, Massart and Rivoirard~\cite{lacour2017estimator}, in the form extended to i.i.d.\ diffusion paths by Marie and Rosier~\cite{marie2023nadaraya} on
\eqref{eq:app-discrete-f}--\eqref{eq:app-discrete-bf}. \begingroup   We consider an auxiliary kernel \(g\), here we take $g=k$ (the biweight kernel). \endgroup
Let \(\mathbf h_0,\mathbf h'_0\) be the smallest bandwidths in the grid set for the numerator and denominator estimators, and set
\[
\langle u,v\rangle_{\mathcal G,g}:=|\mathcal G|^{-1}\sum_{x\in\mathcal G}
g(x)\langle u(x),v(x)\rangle,\quad
\|u\|_{\mathcal G,g}^2:=\langle u,u\rangle_{\mathcal G,g}.
\]
For \(i=1,\ldots,N\), define
\[
B_{\mathbf h}^{(i)}(x):=\sum_{k=n_0}^{n-1}\delta_{t_k}^{(i)}\delta_{t_{k+1}}^{(i)}
K_{\mathbf h}(Y_{t_k}^{(i)}-x)(Y_{t_{k+1}}^{(i)}-Y_{t_k}^{(i)}),
\quad
F_{\mathbf h'}^{(i)}(x):=\Delta\sum_{k=n_0}^{n-1}\delta_{t_k}^{(i)}
K_{\mathbf h'}(Y_{t_k}^{(i)}-x).
\]
The selected bandwidths are
\[
\begin{aligned}
\widehat{\mathbf h}^{\mathrm{PCO}}
&\in\arg\min_{\mathbf h}
\left\{
\|\widehat{bf}_{N,\mathbf h}-\widehat{bf}_{N,\mathbf h_0}\|_{\mathcal G,g}^2
+\frac{2}{N^2(T-t_0)^2}\sum_{i=1}^N
\langle B_{\mathbf h}^{(i)},B_{\mathbf h_0}^{(i)}\rangle_{\mathcal G,g}
\right\},\\
\widehat{\mathbf h}^{\prime\mathrm{PCO}}
&\in\arg\min_{\mathbf h'}
\left\{
\|\widehat f_{N,\mathbf h'}-\widehat f_{N,\mathbf h'_0}\|_{\mathcal G}^2
+\frac{2}{N^2(T-t_0)^2}\sum_{i=1}^N
\langle F_{\mathbf h'}^{(i)},F_{\mathbf h'_0}^{(i)}\rangle_{\mathcal G}
\right\}.
\end{aligned}
\]
The resulting estimator is
\(\widehat b_{N,\widehat{\mathbf h}^{\mathrm{PCO}},
\widehat{\mathbf h}^{\prime\mathrm{PCO}}}\), with the same threshold and region
\(\mathcal R\) as GL.

\smallskip
\noindent\textbf{Additional simulation tables.}

\begin{table}[!htbp]
\centering
\scriptsize
\setlength{\tabcolsep}{3pt}
\begin{tabular}{llccc}
\toprule
Censoring mechanism & \(N\) & isoGL & LS & PCO \\
\midrule
Random ball & 50
& \(0.0406 \pm 0.0039\)
& \(0.0453 \pm 0.0033\)
& \(0.0656 \pm 0.0050\) \\
& 100
& \(0.0398 \pm 0.0032\)
& \(0.0375 \pm 0.0029\)
& \(0.0440 \pm 0.0034\) \\
& 200
& \(0.0107 \pm 0.0007\)
& \(0.0352 \pm 0.0020\)
& \(0.0332 \pm 0.0012\) \\
& 300
& \(0.0098 \pm 0.0005\)
& \(0.0342 \pm 0.0018\)
& \(0.0282 \pm 0.0010\) \\
\addlinespace
Coordinatewise right & 50
& \(0.1806 \pm 0.0070\)
& \(0.1708 \pm 0.0118\)
& \(0.2067 \pm 0.0218\) \\
& 100
& \(0.0780 \pm 0.0118\)
& \(0.1572 \pm 0.0160\)
& \(0.1665 \pm 0.0202\) \\
& 200
& \(0.0670 \pm 0.0059\)
& \(0.1468 \pm 0.0068\)
& \(0.1221 \pm 0.0112\) \\
& 300
& \(0.0616 \pm 0.0057\)
& \(0.1247 \pm 0.0098\)
& \(0.1002 \pm 0.0122\) \\
\bottomrule
\end{tabular}
\caption{Empirical MISE for the linear OU model. The entries report
mean \(\pm\) Monte Carlo standard error over \(M=50\) replications.}
\label{tab:mise-linear-ou}
\end{table}

\begin{table}[!htbp]
\centering
\scriptsize
\setlength{\tabcolsep}{3pt}
\begin{tabular}{llccc}
\toprule
Censoring mechanism & \(N\) & isoGL & LS & PCO \\
\midrule
Random ball & 50
& \(0.2285 \pm 0.0028\)
& \(0.0745 \pm 0.0064\)
& \(0.0881 \pm 0.0062\) \\
& 100
& \(0.2245 \pm 0.0027\)
& \(0.0594 \pm 0.0019\)
& \(0.0567 \pm 0.0046\) \\
& 200
& \(0.0812 \pm 0.0017\)
& \(0.0519 \pm 0.0023\)
& \(0.0469 \pm 0.0026\) \\
& 300
& \(0.0795 \pm 0.0013\)
& \(0.0502 \pm 0.0026\)
& \(0.0405 \pm 0.0020\) \\
\addlinespace
Coordinatewise right & 50
& \(0.3639 \pm 0.0161\)
& \(0.2120 \pm 0.0150\)
& \(0.2847 \pm 0.0359\) \\
& 100
& \(0.1181 \pm 0.0057\)
& \(0.1677 \pm 0.0110\)
& \(0.1687 \pm 0.0113\) \\
& 200
& \(0.1031 \pm 0.0030\)
& \(0.1512 \pm 0.0232\)
& \(0.1203 \pm 0.0101\) \\
& 300
& \(0.1027 \pm 0.0034\)
& \(0.1203 \pm 0.0147\)
& \(0.1079 \pm 0.0090\) \\
\bottomrule
\end{tabular}
\caption{Empirical MISE for the rotational OU model. The entries report
mean \(\pm\) Monte Carlo standard error over \(M=50\) replications.}
\label{tab:mise-rotational-ou}
\end{table}

\begin{table}[!htbp]
\centering
\scriptsize
\setlength{\tabcolsep}{3pt}
\begin{tabular}{llcccc}
\toprule
Censoring mechanism & \(N\) & isoGL & anisoGL & LS & PCO \\
\midrule
Random ball & 50
& \(0.2258 \pm 0.0098\)
& \(0.1768 \pm 0.0089\)
& \(0.1966 \pm 0.0100\)
& \(0.1069 \pm 0.0046\) \\
& 100
& \(0.1662 \pm 0.0203\)
& \(0.1844 \pm 0.0086\)
& \(0.1940 \pm 0.0103\)
& \(0.0703 \pm 0.0062\) \\
& 200
& \(0.1101 \pm 0.0031\)
& \(0.1188 \pm 0.0270\)
& \(0.2058 \pm 0.0093\)
& \(0.0623 \pm 0.0044\) \\
& 300
& \(0.1116 \pm 0.0026\)
& \(0.0392 \pm 0.0009\)
& \(0.2110 \pm 0.0083\)
& \(0.0572 \pm 0.0037\) \\
\addlinespace
Coordinatewise right & 50
& \(0.3411 \pm 0.0182\)
& \(0.2007 \pm 0.0097\)
& \(0.1140 \pm 0.0058\)
& \(0.1550 \pm 0.0099\) \\
& 100
& \(0.2602 \pm 0.0309\)
& \(0.2090 \pm 0.0089\)
& \(0.1093 \pm 0.0064\)
& \(0.1228 \pm 0.0106\) \\
& 200
& \(0.2138 \pm 0.0083\)
& \(0.2213 \pm 0.0074\)
& \(0.0977 \pm 0.0063\)
& \(0.0974 \pm 0.0125\) \\
& 300
& \(0.2150 \pm 0.0063\)
& \(0.2247 \pm 0.0056\)
& \(0.0929 \pm 0.0050\)
& \(0.0860 \pm 0.0090\) \\
\bottomrule
\end{tabular}
\caption{Empirical MISE for the Fixman-like model. The entries report
mean \(\pm\) Monte Carlo standard error over \(M=50\) replications.}
\label{tab:mise-fixman-like}
\end{table}

\smallskip
\noindent\textbf{Additional simulation figures.}
Figures~\ref{fig:linear-ou-comparison} and~\ref{fig:rotational-ou-comparison}
provide the corresponding comparisons for the OU and rotational OU drifts.
Figure~\ref{fig:mise-comparison} reports the empirical MISE curves as a
function of the number of independent trajectories.

\begin{figure}[!htbp]
\centering
\includegraphics[width=0.82\textwidth]{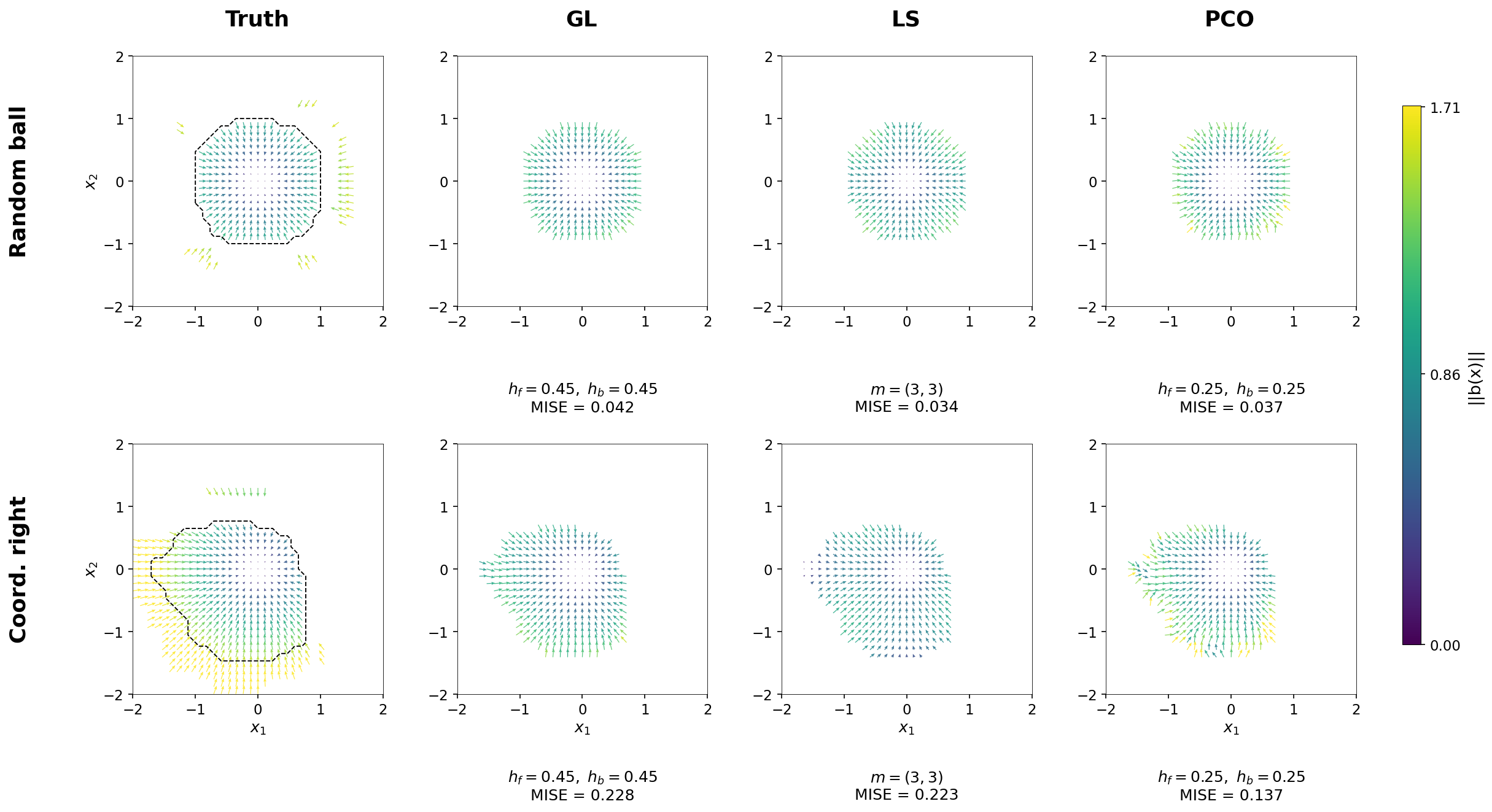}
\caption{Drift reconstruction for the OU model. The first column shows
the true drift field, and the next three columns show the estimates
obtained with \(\widehat b_N^{\mathrm{GL}}\),
\(\widehat b_N^{\mathrm{LS}}\), and
\(\widehat b_N^{\mathrm{PCO}}\). The top row corresponds to random ball
censoring and the bottom row to coordinatewise right censoring.}
\label{fig:linear-ou-comparison}
\end{figure}

\begin{figure}[!htbp]
\centering
\includegraphics[width=0.82\textwidth]{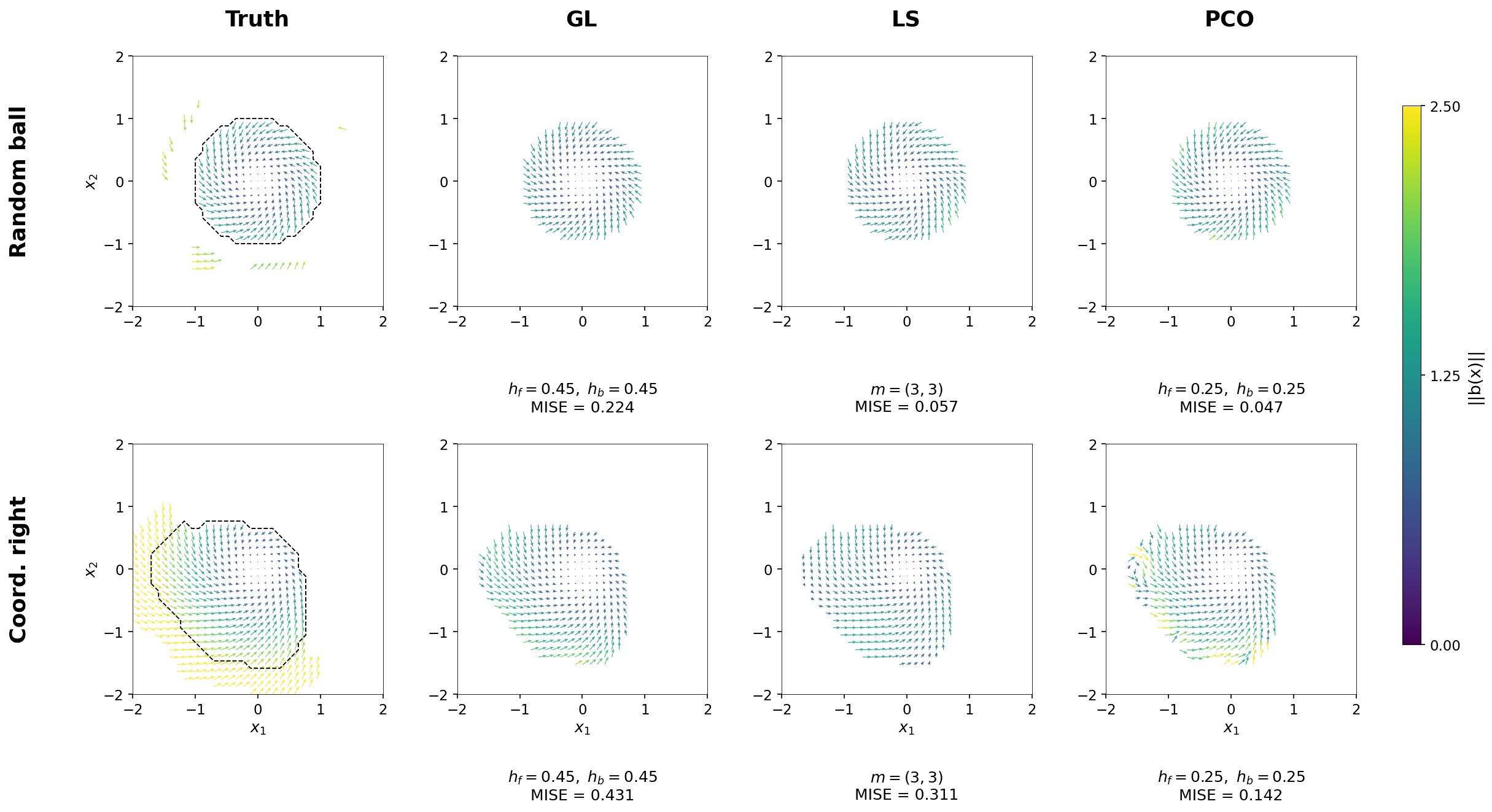}
\caption{Drift reconstruction for the rotational OU model. The first
column shows the true drift field, and the next three columns show the
estimates obtained with \(\widehat b_N^{\mathrm{GL}}\),
\(\widehat b_N^{\mathrm{LS}}\), and
\(\widehat b_N^{\mathrm{PCO}}\). The top row corresponds to random ball
censoring and the bottom row to coordinatewise right censoring.}
\label{fig:rotational-ou-comparison}
\end{figure}

\begin{figure}[!t]
\centering
\includegraphics[width=0.74\textwidth]{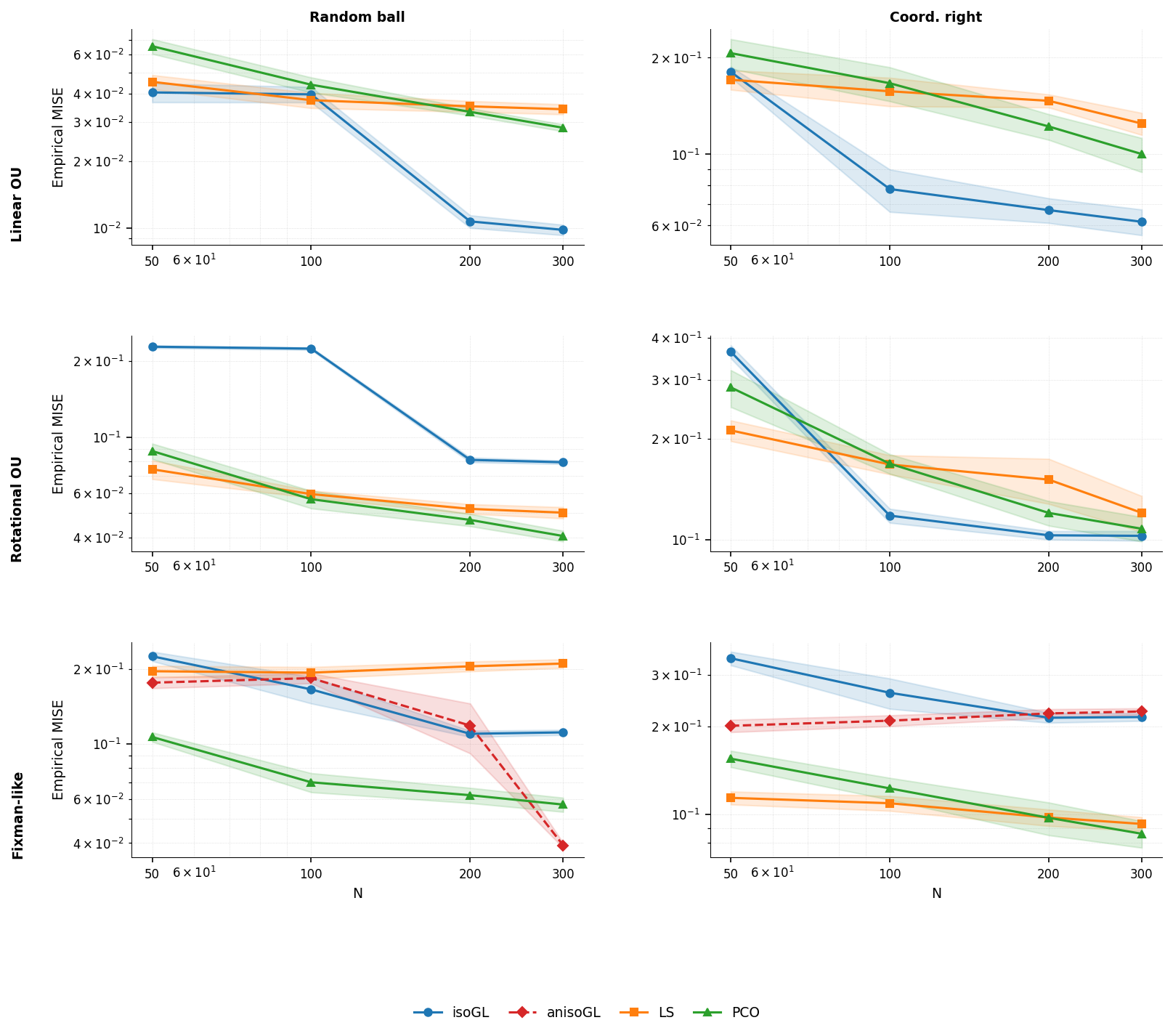}
\caption{Empirical MISE as a function of the number \(N\) of independent
trajectories. The error is evaluated on the effective region
\(\mathcal R\).}
\label{fig:mise-comparison}
\end{figure}

\clearpage

\clearpage
\bibliographystyle{plainnat}
\bibliography{biblio}

\end{document}